\documentclass[11pt,a4paper]{article}

\usepackage{amsthm}

\theoremstyle{definition}
\newtheorem{theorem}{Theorem}[section]
\newtheorem{corollary}[theorem]{Corollary}
\newtheorem{lemma}[theorem]{Lemma}
\newtheorem{proposition}[theorem]{Proposition}
\newtheorem{definition}[theorem]{Definition}
\newtheorem{remark}[theorem]{Remark}

\usepackage[left=1.in, right=1.in, bottom=1.5in, top=1.5in]{geometry}

\usepackage[defaultsans]{droidsans}

\usepackage{amsmath,amssymb}
\usepackage{bm}
\usepackage{mathtools}

\usepackage{booktabs}
\usepackage[hang,small,bf]{caption}
\captionsetup{compatibility=false}
\usepackage[scale=2]{ccicons}
\usepackage{colortbl,color}
\usepackage{pgfplots}
\pgfplotsset{compat=newest}
\usepgfplotslibrary{dateplot}
\usepackage{graphicx} 
\graphicspath{{figures/}}
\usepackage{subcaption}
\usepackage{xcolor,xspace}
\usepackage{here}
\usepackage{pifont}

\newcommand{\cmarkg}{{\color{green}\ding{51}}}
\newcommand{\xmarkr}{{\color{red}\ding{55}}}
\newcolumntype{G}{>{\columncolor[gray]{0.9}}l}

\usepackage{tikz}
\usetikzlibrary{arrows.meta,bbox}
\usetikzlibrary{calc}

\usepackage{fontenc}
\usepackage{newunicodechar}
\usepackage{lmodern}

\usepackage{bibunits}
\usepackage{changepage}
\usepackage{enumitem}
\usepackage{framed}
\usepackage{tcolorbox}
\usepackage[hypertexnames=false]{hyperref}
\hypersetup{
	colorlinks,
	linkcolor={blue!30!black},
	citecolor={blue!50!black},
	urlcolor={blue!80!black}
}
\usepackage{multirow}

\newcommand{\zero}{\bm{0}}

\renewcommand{\u}{\bm{u}}

\newcommand{\x}{\bm{x}}
\newcommand{\y}{\bm{y}}

\renewcommand{\H}{\bm{H}}

\renewcommand{\P}{\bm{P}}
\newcommand{\Q}{\bm{Q}}
\newcommand{\U}{\bm{U}}

\newcommand{\W}{\bm{W}}
\newcommand{\X}{\bm{X}}
\newcommand{\Y}{\bm{Y}}
\newcommand{\Z}{\bm{Z}}

\DeclareMathOperator{\cl}{cl}
\DeclareMathOperator{\interior}{int}
\DeclareMathOperator{\N}{\mathbb{N}}
\DeclareMathOperator{\R}{\mathbb{R}}
\DeclareMathOperator{\C}{\mathcal{C}}
\DeclareMathOperator{\dom}{dom}

\DeclareMathOperator{\dist}{dist}

\DeclareMathOperator{\tr}{Tr}

\DeclareMathOperator*{\argmin}{argmin}

\newcommand{\ie}{\textit{i.e.}}
\newcommand{\etal}{\textit{et al.}}

\newtheorem{assumption}{Assumption}[section]

\usepackage[noend,ruled,vlined,noline]{algorithm2e}
\setlength{\algomargin}{0pt}
\SetKwInput{Input}{Input}
\SetKwInOut{Output}{output}

\title{Majorization-minimization Bregman proximal gradient algorithms for NMF with the Kullback--Leibler divergence}
\author{Shota Takahashi\thanks{Graduate School of Information Science and Technology, The University of Tokyo, Tokyo, Japan (\href{mailto:shota@mist.i.u-tokyo.ac.jp}{shota@mist.i.u-tokyo.ac.jp})} \and Mirai Tanaka\thanks{Department of Statistical Inference and Mathematics, The Institute of Statistical Mathematics, Tokyo, Japan (\href{mailto:mirai@ism.ac.jp}{mirai@ism.ac.jp}, \href{mailto:shiro@ism.ac.jp}{shiro@ism.ac.jp})}
\thanks{Graduate Institute for Advanced Studies, The Graduate University for Advanced Studies, Kanagawa, Japan}
\and Shiro Ikeda\footnotemark[2] \footnotemark[3]}

\date{\today}

\begin{document}

\maketitle

\begin{abstract}
Nonnegative matrix factorization (NMF) is a popular method in machine learning and signal processing to decompose a given nonnegative matrix into two nonnegative matrices.
In this paper, we propose new algorithms, called majorization-minimization Bregman proximal gradient algorithm (MMBPG) and MMBPG with extrapolation (MMBPGe) to solve NMF.
These iterative algorithms minimize the objective function and its potential function monotonically.
Assuming the Kurdyka--\L{}ojasiewicz property, we establish that a sequence generated by MMBPG(e) globally converges to a stationary point.
We apply MMBPG and MMBPGe to the Kullback--Leibler (KL) divergence-based NMF.
While most existing KL-based NMF methods update two blocks or each variable alternately, our algorithms update all variables simultaneously.
MMBPG and MMBPGe for KL-based NMF are equipped with a separable Bregman distance that satisfies the smooth adaptable property and that makes its subproblem solvable in closed form.
Using this fact, we guarantee that a sequence generated by MMBPG(e) globally converges to a Karush--Kuhn--Tucker (KKT) point of KL-based NMF.
In numerical experiments, we compare proposed algorithms with existing algorithms on synthetic data and real-world data.
\end{abstract}

\section{Introduction}
Nonnegative matrix factorization (NMF) has been studied in machine learning~\cite{Chavoshinejad2023-ul,Huang2023-qd,Lee2000-nl,Seyedi2023-gc,Zhou2023-nv} and signal processing~\cite{Fu2019-ak,Marmin2023-kw,Mokry2023-eb} as well as in mathematical optimization~\cite{Gillis2015-kp,Leplat2023-sx,Phan2023-co}.
Given an observed nonnegative matrix $\X\in\R_+^{m \times n}$, NMF is a method for finding two nonnegative matrices $\W\in\R_+^{m \times r}$ and $\H\in\R_+^{r \times n}$ such that $\X = \W\H$ or $\X \simeq \W\H$.
The dimension $r$ is often assumed to be smaller than both $n$ and $m$, and thus $\W$ and $\H$ have smaller sizes than $\X$.
Both exact NMF, which finds $\W$ and $\H$ such that $\X = \W\H$, and approximate NMF, $\X \simeq \W\H$, are NP-hard~\cite{Vavasis2010-ds}.
In real-world applications, computing $\W$ and $\H$ such that $\X \simeq \W\H$ is the practical goal because $\X = \W\H$ does not hold in general for $r < \min\{m, n\}$.

In order to achieve the above goal, we define the loss function to measure the similarity between $\X$ and $\W\H$ and denote it as $D(\X,\W\H)$ with $D:\R_+^{m \times n}\times\R_+^{m\times n} \to (-\infty, +\infty]$.
A minimization problem of the loss function with regularization is defined by
\begin{align}
    \min_{\W\in\R_+^{m \times r},\H\in\R_+^{r \times n}} \quad D(\X, \W\H) + g(\W, \H), \label{prob:klnmf}
\end{align}
where $g:\R_+^{m \times r}\times\R_+^{r \times n} \to(-\infty,+\infty]$ is a regularization term.
Different loss functions have been proposed in the studies of NMF, such as the Frobenius loss $D(\X,\W\H) = \|\X - \W\H\|_F^2:=\sum_{i,j}(X_{ij} - (\W\H)_{ij})^2$, the componentwise $\ell_1$ loss $D(\X,\W\H) = \|\X - \W\H\|_1 := \sum_{i,j}|X_{ij} - (\W\H)_{ij}|$, the Itakura--Saito (IS) divergence $D(\X,\W\H) = \sum_{i,j}\left(\frac{X_{ij}}{(\W\H)_{ij}} - \log \frac{X_{ij}}{(\W\H)_{ij}}-1\right)$, and the $\beta$-divergence loss, which is a generalization of the IS divergence, the Kullback--Leibler (KL) divergence, and the Frobenius loss.
See also~\cite[Chapter 5]{Gillis2020-dh} for more details of these losses.
In this paper, we focus on the KL divergence~\cite{Kullback1951-yv} as the loss function $D:\R_+^{m \times n} \times \R_{++}^{m \times n}\to\R_+$, given by
\begin{align*}
    D(\X, \W\H) := \sum_{i=1}^m\sum_{j=1}^n\left(X_{ij}\log\frac{X_{ij}}{(\W\H)_{ij}} - X_{ij} + (\W\H)_{ij}\right),
\end{align*}
where we define $0\log 0 := 0$.
In what follows, the optimization problem~\eqref{prob:klnmf} with the KL divergence is called KL-NMF.

Many different approaches have been proposed to solve NMF. Lee and Seung~\cite{Lee2000-nl} proposed the multiplicative updates (MU) for KL-NMF with $g\equiv0$, and Marmin~\etal~\cite{Marmin2023-jh} proposed MU for $\beta$-divergence loss.
Marmin~\etal~\cite{Marmin2023-kw} proposed majorization-minimization methods for $\beta$-divergence loss with $g(\W, \H) = \mu_{\W}\|\W\|_1 + \mu_{\H}\|\H\|_1$ for $\mu_{\W},\mu_{\H} > 0$.
Likewise, when $g\equiv0$, Hien and Gillis~\cite{Hien2021-sh} compared KL-NMF algorithms such as MU, the alternating direction method of multipliers, the block mirror descent, and so on.
Hsieh and Dhillon~\cite{Hsieh2011-ik} applied coordinate descent methods to NMF.
Recently, Hien~\etal~\cite{Hien2025-vc} proposed the MU with extrapolation, which is an accelerated version of MU.
Note that many algorithms update two blocks or each variable alternately and would not converge to a Karush--Kuhn--Tucker (KKT) point from any initial point when general $g \not\equiv 0$.
For example, because a KKT point is equivalent to a stationary point in this case, even when $g\not\equiv0$ is Clarke regular, it holds that $\zero\in\nabla f(\W,\H) + \partial \tilde{g}(\W,\H) \subset \nabla f(\W,\H) + \partial_{\W} \tilde{g}(\W,\H) \times \partial_{\H} \tilde{g}(\W,\H)$, where $f(\W, \H) = D(\X,\W\H)$, $\tilde{g} = g + \delta_{\R_+^{m \times r}\times\R_+^{r \times n}}$, $\partial \tilde{g}$ is the limiting subdifferential of $g$, and $\partial_{\W} \tilde{g}$ and $\partial_{\H} \tilde{g}$ are the partial limiting subdifferential with respect to $\W$ and $\H$, respectively (see also~\cite[Section 2.3]{Takahashi2023-uh}).

Let $f(\W, \H) = D(\X,\W\H)$ with $f:\R_+^{m \times r}\times\R_+^{r \times n}\to(-\infty,+\infty]$.
KL-NMF~\eqref{prob:klnmf} is a composite optimization problem.
A popular approach for solving composite optimization problems would be a proximal algorithm.
The proximal algorithm is a class of algorithms based on the proximal mapping.
It includes many algorithms, such as the proximal point algorithm~\cite{Fukushima1981-wn,Rockafellar1976-th}, the proximal gradient method~\cite{Bruck1975-et,Lions1979-id,Passty1979-sk}, and the fast iterative shrinkage-thresholding algorithm (FISTA)~\cite{Beck2009-kr}.
The proximal gradient algorithm with constant step-sizes requires the global Lipschitz continuity of $\nabla f$.
The proximal gradient algorithm with line search can be applied to problems without the global Lipschitz continuity of $\nabla f$, which is, for example, NMF with the KL divergence loss, the Frobenius loss, and the $\beta$-divergence loss.
For such problems, step-sizes may be underestimated, and the convergence of proximal gradient algorithms would be slow. On the other hand, algorithms based on the smooth adaptable property can be applied to them without line search.

Bolte \etal~\cite{Bolte2018-zt} proposed the Bregman proximal gradient algorithm (BPG), which does not require the global Lipschitz continuity.
BPG and other methods using the Bregman distance (its definition is in Section~\ref{subsec:bregman-distance}) have been actively studied in recent years.
Zhang \etal~\cite{Zhang2019-ia} proposed the Bregman proximal gradient algorithm with extrapolation (BPGe) as an acceleration version of BPG.
Takahashi \etal~\cite{Takahashi2022-ml} proposed the Bregman proximal algorithms for a difference of convex functions (DC) optimization problem.
Takahashi and Takeda~\cite{Takahashi2024-ej} proposed the approximate Bregman proximal gradient algorithm exploiting an approximate measurement of the Bregman distance.
Mukkamala and Ochs~\cite{Mukkamala2019-mk} proposed a variant of BPG for NMF with the Frobenius loss.
Wang \etal~\cite{Wang2024-zl} proposed a stochastic variant of BPG and applied it to NMF with the Frobenius loss.
These Bregman proximal algorithms have been applied to many other applications, such as phase retrieval~\cite{Bolte2018-zt,Takahashi2022-ml,Zhang2019-ia}, low-rank minimization~\cite{Dragomir2021-rv}, and blind deconvolution~\cite{Takahashi2023-uh}.

In this paper, we propose new Bregman proximal gradient algorithms, called the majorization-minimization Bregman proximal gradient algorithm (MMBPG) and the MMBPG with extrapolation (MMBPGe), which can be regarded as a BPG(e) combined with the majorization-minimization algorithm.
These methods update $\W$ and $\H$ not alternately but simultaneously.
Inspired by~\cite{Lee2000-nl}, MMBPG and MMBPGe minimize an auxiliary function majorizing the loss function, instead of the objective function.
In addition, we establish the monotonically decreasing property of the objective function for MMBPG and that of the potential function for MMBPGe (see Lemmas~\ref{lemma:decreasing-bpg} and~\ref{lemma:decreasing-bpge}, respectively).
We also establish global convergence, \ie, a sequence of MMBPG(e) converges to a stationary point from any initial point (see Theorems~\ref{theorem:covergence-mmbpg} and~\ref{theorem:convergence-mmbpge}).
To apply MMBPG(e) to KL-NMF, we define a separable Bregman distance that satisfies the smooth adaptable property (see Theorem~\ref{theorem:l-smad}), which makes the subproblem of MMBPG(e) solvable in closed form.
We provide closed-form solutions for the subproblem of MMBPG(e) when using $\ell_1$ regularization and squared Frobenius norm regularization (see Propositions~\ref{prop:closed-form-nmf-l1} and~\ref{prop:closed-form-nmf-fro}).
Using these techniques, MMBPG(e) can update all variables simultaneously.
In numerical experiments, we have compared proposed algorithms with existing algorithms on synthetic data and real-world data.

The rest of the paper is organized as follows.
Section~\ref{sec:preliminaries} summarizes important properties of KL-NMF and notation such as the Bregman distance, the smooth adaptable property, and the Kurdyka--\L{}ojasiewicz property.
These tools are used to establish the theoretical aspect of our algorithms.
Section~\ref{sec:proposed-method} proposes MMBPG and MMBPGe, and establishes their monotonically decreasing properties.
Assuming the Kurdyka--\L{}ojasiewicz property, we guarantee global convergence to a stationary point.
Section~\ref{sec:mmbpgs-kl-nmf} shows MMBPG and MMBPGe for KL-NMF.
It also provides a separable Bregman distance and closed-form solutions of their subproblems without heavy computation.
Moreover, we prove that MMBPG(e) globally converges to a KKT point.
Section~\ref{sec:numerical-experiments} shows numerical experiments on synthetic data and real-world data sets to compare MMBPG(e) with other existing algorithms.
Section~\ref{sec:conclusion} offers some conclusions.

\paragraph*{Notation}\quad
Vectors and matrices are shown in boldface.
The Frobenius inner product of $\X, \Y\in\R^{m \times n}$ is defined by $\langle\X, \Y\rangle := \tr(\X^{\mathsf{T}}\Y)$ and the Frobenius norm of $\X$ is defined by $\|\X\|_F := \sqrt{\sum_{i,j}X_{ij}^2}$.

In what follows, we use the following notation.
Let $[n] := \{1,\ldots, n\}$ for $0 < n\in\N$ be a subset of $\N$.
Let $\interior{C}$ and $\cl{C}$ be the interior and the closure of a set $C\subset\R^{m \times n}$, respectively.
For an extended-real-valued function $f:\R^{m \times n}\to[-\infty, +\infty]$, the effective domain of $f$ is defined by $\{\X\in\R^{m \times n} \mid f(\X) < +\infty\}$ and denoted by $\dom f$.
The function $f$ is said to be proper if $f(\X) > -\infty$ for all $\X\in\R^{m \times n}$ and $\dom f \neq \emptyset$.
For a proper and lower semicontinuous function $f:\R^{m \times n}\to(-\infty, +\infty]$, we also define $\dom\partial f:= \{\X\in\R^{m \times n}\mid\partial f(\X)\neq\emptyset\}$, where $\partial f(\X)$ is the limiting differential (also called the general subdifferential or the Mordukhovich subdifferential)~\cite[Definition 8.3]{Rockafellar1997-zb}.
When $f$ is locally lower semicontinuous at $\bar{\x}$ with $\bar{\x}$ finite and strictly continuous, $\partial f(\bar{\x})$ is nonempty and compact~\cite[Theorem 9.13]{Rockafellar1997-zb}.

\section{Preliminaries}\label{sec:preliminaries}
\subsection{Properties of KL-NMF}\label{subsec:properties-kl-nmf}
We introduce properties of KL-NMF~\eqref{prob:klnmf}.
Throughout this section, we assume $g\equiv0$. 
\begin{proposition}[{\cite[Proposition 2.1]{Finesso2006-hl}}]
    \label{prop:global-solution}
    Problem~\eqref{prob:klnmf} has a global optimal solution.
\end{proposition}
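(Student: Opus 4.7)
The plan is to apply the direct method. Let $\alpha := \inf\{D(\X,\W\H) \mid \W \in \R_+^{m \times r},\H\in\R_+^{r \times n}\}$; since $D \geq 0$ and $D(\X,\Y) < +\infty$ whenever $\Y$ has strictly positive entries (\eg, take $\W = (1/r)\mathbf{1}\mathbf{1}^{\mathsf T}$ and $\H$ proportional to $\mathbf{1}\mathbf{1}^{\mathsf T}$), one has $\alpha \in [0,+\infty)$, and there is a minimizing sequence $\{(\W^t,\H^t)\}_t$ with $D(\X,\W^t\H^t) \to \alpha$. The main obstacle is that the feasible set is closed but unbounded and the objective is invariant under the rescaling $(\W,\H) \mapsto (\W\D,\D^{-1}\H)$ for every positive-diagonal $\D\in\R_{++}^{r \times r}$; direct coercivity therefore fails and this scale invariance must be exploited.

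The first step would be to normalize the sequence. For each $t$ and each $j \in [r]$, if the $j$-th column of $\W^t$ is nonzero divide it by $\sum_i W_{ij}^t$ and multiply the $j$-th row of $\H^t$ by the same scalar, which preserves $\W^t\H^t$; if the $j$-th column of $\W^t$ is zero, that row of $\H^t$ does not contribute to $\W^t\H^t$, so replace the column by any fixed probability vector in $\R_+^m$ and reset the row to zero. After this reduction, each column of $\W^t$ sums to one, so $\W^t$ lies in a compact subset of $\R_+^{m \times r}$. To control $\H^t$, I would use the elementary inequality $a\log(a/b) - a + b \geq b/2 - a$ valid for all $a \geq 0,\ b > 0$ (it follows from $\log s \leq s/e$ applied to $s = b/a$ when $a>0$, and is immediate when $a=0$), which sums entrywise to
\begin{equation*}
D(\X,\Y) \;\geq\; \tfrac{1}{2}\|\Y\|_1 - \|\X\|_1.
\end{equation*}
Hence $\sup_t \|\W^t\H^t\|_1 < +\infty$, and because of the normalization $\sum_i W_{il}^t = 1$,
\begin{equation*}
\|\W^t\H^t\|_1 \;=\; \sum_{j=1}^n\sum_{l=1}^r \Big(\sum_{i=1}^m W_{il}^t\Big) H_{lj}^t \;=\; \|\H^t\|_1,
\end{equation*}
so $\{\H^t\}$ is bounded as well.

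A convergent subsequence $(\W^t,\H^t) \to (\W^*,\H^*) \in \R_+^{m \times r}\times\R_+^{r\times n}$ thus exists, and by continuity $\W^t\H^t \to \W^*\H^*$. Extending $D(\X,\cdot)$ to $[0,+\infty]$ by setting $D(\X,\Y) = +\infty$ whenever $Y_{ij}=0<X_{ij}$ for some $(i,j)$, the extension is lower semicontinuous (each summand is lsc in $Y_{ij}$ under $0\log 0 := 0$), so
\begin{equation*}
D(\X,\W^*\H^*) \;\leq\; \liminf_{t\to\infty} D(\X,\W^t\H^t) \;=\; \alpha \;<\; +\infty,
\end{equation*}
which both rules out any incompatibility of zero patterns a posteriori and certifies $(\W^*,\H^*)$ as a global minimizer. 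The main subtlety lies in the normalization step when some columns of $\W^t$ degenerate along the sequence, and in making the lower semicontinuity argument precise at boundary points of $\R_+^{m \times n}$; once these are carefully treated, the rest is a routine compactness argument.
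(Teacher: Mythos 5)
The paper does not prove this proposition; it is imported verbatim by citation to Finesso and Spreij, so there is no in-paper argument to compare against. Your direct-method proof is correct and is essentially the standard argument from that reference: exploit the scale invariance $(\W,\H)\mapsto(\W\bm{D},\bm{D}^{-1}\H)$ to normalize the columns of $\W^t$ onto simplices, use the linear lower bound $D(\X,\Y)\geq\tfrac12\|\Y\|_1-\|\X\|_1$ (your inequality $a\log(a/b)-a+b\geq b/2-a$ checks out via $\log s\leq s/e$) to bound $\|\H^t\|_1$ along the minimizing sequence, and conclude by compactness together with lower semicontinuity of the extended divergence on $\R_+^{m\times n}$. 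The handling of degenerate columns and of the zero pattern at the limit is the right level of care, so I see no gap.
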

Proposition~\ref{prop:global-solution} for KL-NMF constrained by $\W \geq \epsilon$ and $\H \geq \epsilon$ with $\epsilon \geq 0$ was established in~\cite[Proposition 1]{Hien2021-sh}.
Problem~\eqref{prob:klnmf} has infinitely many solutions.
Let $(\W^*, \H^*)\in\R_+^{m \times r}\times\R_+^{r \times n}$ be a global optimal solution of~\eqref{prob:klnmf} and let $\Q\in\R^{r \times r}$ be any regular matrix such that $(\W^*\Q, \Q^{-1}\H^*)\in\R_+^{m \times r}\times\R_+^{r \times n}$.
The pair $(\W^*\Q, \Q^{-1}\H^*)$ is also a global optimal solution because of $D(\X, \W^*\H^*) = D(\X, \W^*\Q\Q^{-1}\H^*)$.
Problem~\eqref{prob:klnmf} has the nonuniqueness of global optimal solutions.

We define a \emph{Karush--Kuhn--Tucker (KKT) point} $(\W^*, \H^*)$ of~\eqref{prob:klnmf}.
If the objective function $f(\W, \H)$ is differentiable at $(\W^*, \H^*)$, the KKT condition is given by
\begin{align}
    W_{il}^*&\geq 0,\quad \frac{\partial f(\W^*,\H^*)}{\partial W_{il}} \geq 0,\quad
    \frac{\partial f(\W^*,\H^*)}{\partial W_{il}}W_{il}^* = 0,\quad \forall i\in[m],l\in[r],\label{cond:kkt_w}\\
    H_{lj}^*&\geq 0,\quad \frac{\partial f(\W^*,\H^*)}{\partial H_{lj}} \geq 0,\quad
    \frac{\partial f(\W^*,\H^*)}{\partial H_{lj}}H_{lj}^* = 0,\quad \forall l\in[r],j\in[n],\label{cond:kkt_h}
\end{align}
where
\begin{align*}
    \frac{\partial f(\W^*,\H^*)}{\partial W_{il}} &= \sum_{j=1}^n H_{lj}^* - \sum_{j=1}^n \frac{X_{ij}H_{lj}^*}{(\W^*\H^*)_{ij}},\\
    \frac{\partial f(\W^*,\H^*)}{\partial H_{lj}} &= \sum_{i=1}^m W_{il}^* - \sum_{i=1}^m \frac{X_{ij}W_{il}^*}{(\W^*\H^*)_{ij}}.
\end{align*}
Let $\delta_C$ be the indicator function defined by $\delta_C(\x) = 0$ for $\x\in C$ and $\delta_C(\x) = +\infty$ otherwise.
Problem~\eqref{prob:klnmf} is equivalent to minimizing $f + \delta_{\R_+^{m \times r} \times \R_+^{r \times n}}$.
KKT points of~\eqref{prob:klnmf} coincide with its stationary points satisfying
\begin{align}
     \frac{\partial f(\W^*,\H^*)}{\partial W_{il}}(W_{il} - W_{il}^*)&\geq 0, \quad \forall W_{il} \geq 0,\quad \forall i\in[m],l\in[r],\label{cond:stationary_w}\\
     \frac{\partial f(\W^*,\H^*)}{\partial H_{lj}}(H_{lj} - H_{lj}^*)&\geq 0, \quad \forall H_{lj} \geq 0,\quad \forall l\in[r],j\in[n].\label{cond:stationary_h}
\end{align}
KKT conditions~\eqref{cond:kkt_w} and~\eqref{cond:kkt_h} hold if and only if~\eqref{cond:stationary_w} and~\eqref{cond:stationary_h} hold, respectively. See also~\cite[Section 2.2]{Hien2021-sh}.

\subsection{Bregman Distances}\label{subsec:bregman-distance}
Let $C$ be a nonempty open convex subset of $\R^{m \times n}$.
We introduce the kernel generating distance and the Bregman distance.
\begin{definition}[Kernel generating distance~{\cite[Definition 2.1]{Bolte2018-zt}}]
    A function $\phi:\R^{m \times n}\to(-\infty,+\infty]$ is called a \emph{kernel generating distance} associated with $C$ if it satisfies the following conditions:
    \begin{enumerate}
        \item $\phi$ is proper, lower semicontinuous, and convex, with $\dom\phi\subset\cl{C}$ and $\dom\partial\phi = C$.
        \item $\phi$ is $\mathcal{C}^1$ on $\interior\dom\phi \equiv C$.
    \end{enumerate}
    We denote the class of kernel generating distances associated with $C$ by $\mathcal{G}(C)$.
\end{definition}
\begin{definition}[Bregman distance~\cite{Bregman1967-rf}]
    Given a kernel generating distance $\phi\in\mathcal{G}(C)$, a \emph{Bregman distance} $D_\phi:\dom\phi\times\interior\dom\phi\to\R_+$ associated with $\phi$ is defined by
    \begin{align*}
        D_\phi(\X,\Y) = \phi(\X) - \phi(\Y) - \langle\nabla\phi(\Y),\X-\Y\rangle.
    \end{align*}
\end{definition}
Note that the Bregman distance does not satisfy the symmetry and the triangle inequality in general.
Nevertheless, the Bregman distance behaves like a measure of similarity.
For example, $D_\phi$ is always nonnegative due to the convexity of $\phi$.
In addition, when $\phi$ is strictly convex, $D_\phi(\X,\Y) = 0$ holds if and only if $\X = \Y$.

The KL divergence is an example of a Bregman distance, \ie, $D(\X,\Y) = D_{\phi}(\X, \Y)$ by using $\phi(\X) = \sum_{i=1}^n\sum_{j=1}^m X_{ij}\log X_{ij}$ and $\dom\phi = \R_+^{m \times n}$.
Setting $\phi(\X) = \frac{1}{2}\|\X\|_F^2$, $D_\phi(\X,\Y) = \frac{1}{2}\|\X - \Y\|_F^2$.
See~\cite{Bauschke2017-hg,Bauschke1997-vk,Lu2018-hx} and~\cite[Table 2.1]{Dhillon2008-zz} for more examples.

The Bregman distance satisfies the three-point identity~\cite{Bolte2018-zt}, for any $\X\in\dom\phi$ and $\Y,\Z\in\interior\dom\phi$,
\begin{align}
    D_\phi(\X, \Z) - D_\phi(\X, \Y) - D_\phi(\Y, \Z) = \langle\nabla\phi(\Y) - \nabla\phi(\Z),\X-\Y\rangle.\label{eq:three-point}
\end{align}

\subsection{Smooth Adaptable Property}
The smooth adaptable property~\cite{Bolte2018-zt} is a key property for Bregman-type algorithms.
\begin{definition}[$L$-smooth adaptable property~{\cite[Definition 2.2]{Bolte2018-zt}}]\label{def:l-smad}
    Consider a pair of functions $(f, \phi)$ satisfying the following conditions:
    \begin{enumerate}
        \item $\phi\in\mathcal{G}(C)$,
        \item $f:\R^{m \times n}\to(-\infty,+\infty]$ is proper and lower semicontinuous with $\dom\phi\subset\dom f$, which is $\mathcal{C}^1$ on $C = \interior\dom\phi$.
    \end{enumerate}
    The pair $(f,\phi)$ is called $L$-\emph{smooth adaptable} ($L$-\emph{smad}) on $C$ if there exists $L > 0$ such that $L\phi - f$ and $L\phi + f$ are convex on $C$.
\end{definition}

The smooth adaptable property leads to majorizing $f$ by a kernel generating distance $\phi$ and a parameter $L > 0$.
Lu~\etal~\cite{Lu2018-hx} also defined similar properties called relative smoothness and relative strong convexity.

The smooth adaptable property provides the extended descent lemma.
\begin{lemma}[Extended descent lemma~{\cite[Lemma 2.1]{Bolte2018-zt}}]
    \label{lemma:extended-descent}
    The pair of functions $(f, \phi)$ is $L$-smad on $C$ if and only if
    \begin{align*}
        |f(\X) - f(\Y) - \langle\nabla f(\Y),\X-\Y\rangle|\leq LD_\phi(\X,\Y), \quad \forall\X,\Y\in\interior\dom\phi.
    \end{align*}
\end{lemma}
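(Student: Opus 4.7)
The plan is to prove the equivalence by directly translating the convexity of $L\phi \pm f$ on $C$ into a gradient inequality, since both $\phi$ and $f$ are $\mathcal{C}^1$ on $C$ by the hypotheses defining $L$-smad. The central tool is the standard first-order characterization of convex $\mathcal{C}^1$ functions: a $\mathcal{C}^1$ function $h$ on an open convex set $C$ is convex if and only if $h(\X) \geq h(\Y) + \langle \nabla h(\Y), \X - \Y\rangle$ for all $\X, \Y \in C$. Applying this to the two functions $L\phi - f$ and $L\phi + f$ will immediately produce a two-sided bound that matches $L D_\phi(\X, \Y)$ once we rearrange terms.

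For the forward direction, I would assume the pair $(f, \phi)$ is $L$-smad, take the gradient inequality for the convex $\mathcal{C}^1$ function $L\phi - f$, and rearrange it to obtain
\begin{align*}
f(\X) - f(\Y) - \langle \nabla f(\Y), \X - \Y\rangle \leq L\bigl(\phi(\X) - \phi(\Y) - \langle \nabla \phi(\Y), \X - \Y\rangle\bigr) = L D_\phi(\X, \Y).
\end{align*}
The same gradient inequality applied to $L\phi + f$ yields the reverse bound $-L D_\phi(\X, \Y) \leq f(\X) - f(\Y) - \langle \nabla f(\Y), \X - \Y\rangle$. Combining the two gives exactly the claimed absolute value inequality on $\interior \dom \phi$.

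For the converse, starting from the displayed inequality I would unpack the upper bound into $f(\X) - f(\Y) - \langle \nabla f(\Y), \X - \Y\rangle \leq L D_\phi(\X, \Y)$ and rearrange it to
\begin{align*}
(L\phi - f)(\X) \geq (L\phi - f)(\Y) + \langle L\nabla \phi(\Y) - \nabla f(\Y), \X - \Y\rangle \quad \forall \X, \Y \in C.
\end{align*}
Since $L\phi - f$ is $\mathcal{C}^1$ on the open convex set $C$ and $L\nabla \phi - \nabla f$ is its gradient, the first-order characterization identifies this inequality with convexity of $L\phi - f$ on $C$. Treating the lower bound $-L D_\phi(\X, \Y) \leq f(\X) - f(\Y) - \langle \nabla f(\Y), \X - \Y\rangle$ analogously yields the corresponding gradient inequality for $L\phi + f$, hence its convexity. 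Thus $(f, \phi)$ is $L$-smad on $C$.

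The proof is essentially a rearrangement exercise, so there is no serious obstacle; the only thing to be careful about is the restriction to $\interior \dom \phi$, where both $\phi$ and $f$ are $\mathcal{C}^1$ and the first-order characterization of convexity is directly available. Extending the inequality or convexity claim beyond this open set is not required for the statement, so no approximation or closure argument is needed.
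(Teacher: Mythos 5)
Your proof is correct and is exactly the standard argument: the paper itself omits the proof, deferring to \cite[Lemma 2.1]{Bolte2018-zt}, and the argument there is precisely this application of the first-order characterization of convexity to $L\phi - f$ and $L\phi + f$ on the open convex set $C$. Nothing is missing.
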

Setting $\phi(\X)=\frac{1}{2}\|\X\|_F^2$, Lemma~\ref{lemma:extended-descent} reduces to the classical descent lemma.
\subsection{Kurdyka--\L{}ojasiewicz Property}
The Kurdyka--\L{}ojasiewicz property is used for establishing global convergence.
Attouch~\etal~\cite{Attouch2010-nr} extended the \L{}ojasiewicz gradient inequality~\cite{Kurdyka1998-zd,Lojasiewicz1963-kf} to nonsmooth functions.

Given $\eta>0$, let $\Xi_{\eta}$ denote the set of all continuous concave functions $\psi:[0, \eta) \to \R_+$ that are $\mathcal{C}^1$ on $(0, \eta)$ with positive derivatives and which satisfy $\psi(0) = 0$.
Here, we define the Kurdyka--\L{}ojasiewicz property.
\begin{definition}[Kurdyka--\L{}ojasiewicz property~{\cite[Definition 7]{Attouch2010-nr}}]
    \label{def:kl}
    Let $f: \R^n \to(-\infty,+\infty]$ be a proper and lower semicontinuous function.
    \begin{enumerate}
        \item $f$ is said to have the \emph{Kurdyka--\L{}ojasiewicz property} at $\hat{\x}\in\dom\partial f$ if there exist $\eta\in(0, +\infty]$, a neighborhood $U$ of $\hat{\x}$, and a function $\psi\in\Xi_{\eta}$ such that, for all
        \begin{align*}
            \x\in U \cap\{\x\in\R^n \mid f(\hat{\x})<f(\x)<f(\hat{\x})+\eta\},
        \end{align*}
        the following inequality holds:
        \begin{align*}
            \psi'(f(\x) - f(\hat{\x})) \dist(\bm{0}, \partial f(\x)) \geq 1.
        \end{align*}
        \item If $f$ has the Kurdyka--\L{}ojasiewicz property at each point of $\dom\partial f$, then it is called a \emph{Kurdyka--\L{}ojasiewicz function}.
    \end{enumerate}
\end{definition}
Let $\psi(s) = cs^{1-\theta}$ for some $\theta\in[0,1)$ and $c > 0$.
The parameter $\theta$ is called the Kurdyka--\L{}ojasiewicz exponent, which affects convergence rates.
Li and Pong~\cite{Li2018-mf} showed calculus rules of the Kurdyka--\L{}ojasiewicz exponent.

\section{Proposed Methods: Majorization-Minimization Bregman Proximal Gradient Algorithms}\label{sec:proposed-method}
In this section, we consider the following optimization problem that includes~\eqref{prob:klnmf} as a special case:
\begin{align}
    \min_{\x\in \cl{C}} \quad \Psi(\x) &:= f(\x) + g(\x), \label{prob:general}
\end{align}
where $C\subset\R^n$ is a nonempty open convex set.
We make the following assumptions for~\eqref{prob:general}.
\begin{assumption}\label{assumption:setting}\quad
    \begin{enumerate}
        \item $\phi\in\mathcal{G}(C)$ with $\cl{C} = \cl{\dom\phi}$ on $C = \interior\dom\phi$.
        \item $f:\R^n \to (-\infty,+\infty]$ is a proper and lower semicontinuous function with $\dom \phi \subset \dom f$, which is $\C^1$ on $C$.
        \item $g:\R^n \to (-\infty, +\infty]$ is a proper and lower semicontinuous function with $\dom g \cap C \neq \emptyset$.
        \item $v := \inf_{\x\in\cl{C}}\Psi(\x) > -\infty$.
        \item The function $\phi + \lambda g$ is supercoercive for any $\lambda > 0$, \ie,
        \begin{align*}
            \lim_{\|\x\|\to+\infty}\frac{\phi(\x) + \lambda g(\x)}{\|\x\|} = +\infty.
        \end{align*}
    \end{enumerate}
\end{assumption}
Assumption~\ref{assumption:setting} is standard for Bregman-type algorithms~\cite[Assumptions A, B, and C]{Bolte2018-zt},~\cite[Assumptions 1 and 2]{Takahashi2022-ml}, and~\cite[Assumptions 9 and 11]{Takahashi2024-ej}.

\subsection{Majorization-Minimization Bregman Proximal Gradient Algorithm}
Establishing the smooth adaptable property of $(f, \phi)$ is often difficult.
In order to alleviate this fact, we introduce an auxiliary function.
A function $\hat{f}:\R^n \to (-\infty,+\infty]$ is called an \emph{auxiliary function} of $f$ at $\y \in C$ if it satisfies the following conditions:
\begin{enumerate}
    \item $\hat{f}$ is proper and lower semicontinuous with $\dom \phi \subset \dom \hat{f}$, which is $\C^1$ on $C$.
    \item $f(\x)\leq \hat{f}(\x)$ for all $\x \in C$.
    \item $f(\y) = \hat{f}(\y)$ and $\nabla f(\y) = \nabla\hat{f}(\y)$.
\end{enumerate}

The \emph{Bregman proximal gradient mapping} at $\x \in C$ is defined by
\begin{align*}
    \mathcal{T}_{\lambda}(\x) := \argmin_{\u\in\cl{C}}\left\{\langle\nabla f(\x), \u - \x\rangle + g(\u) + \frac{1}{\lambda} D_\phi(\u, \x)\right\},
\end{align*}
where $\lambda > 0$.
Bolte~\etal~\cite{Bolte2018-zt} proposed the Bregman proximal gradient algorithm (BPG) by using the Bregman proximal gradient mapping $\mathcal{T}_{\lambda}(\x)$.
The iteration of BPG is given by $\x^{k+1} = \mathcal{T}_{\lambda}(\x^k)$ for $\x^k \in C$.
When $\phi = \frac{1}{2}\|\cdot\|^2$, $\mathcal{T}_{\lambda}(\x)$ is equivalent to a proximal gradient mapping.
We make the following assumption for $\mathcal{T}_{\lambda}(\x)$.
\begin{assumption}\label{assumption:bregman-mapping-well-defined}
    For all $\x \in C$ and $\lambda > 0$, it holds that $\mathcal{T}_{\lambda}(\x)\subset C$.
\end{assumption}
Assumption~\ref{assumption:bregman-mapping-well-defined} guarantees that a sequence generated by $\mathcal{T}_{\lambda}$ belongs to $C$.
See, for more discussion about Assumption~\ref{assumption:bregman-mapping-well-defined},~\cite{Bolte2018-zt}.
We have the well-posedness of $\mathcal{T}_\lambda$. Its proof is omitted because it is the same as~\cite[Lemma 3.1]{Bolte2018-zt}.
\begin{lemma}[Well-posedness of $\mathcal{T}_\lambda$]
    Suppose that Assumptions~{\rm\ref{assumption:setting}} and~{\rm\ref{assumption:bregman-mapping-well-defined}} hold, and let $\x \in \interior \dom \phi$. Then, for any $\lambda > 0$, the set $\mathcal{T}_\lambda(\x)$ is a nonempty and compact subset of $C$.
\end{lemma}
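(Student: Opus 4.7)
The plan is to invoke the standard Weierstrass existence theorem for proper lower semicontinuous coercive functions on closed sets. To this end, I first expand the objective of the subproblem using the definition of the Bregman distance:
\begin{align*}
    h_{\x}(\u) &:= \langle\nabla f(\x), \u - \x\rangle + g(\u) + \frac{1}{\lambda} D_\phi(\u, \x) \\
    &= g(\u) + \frac{1}{\lambda}\phi(\u) + \left\langle \nabla f(\x) - \frac{1}{\lambda}\nabla\phi(\x),\, \u - \x \right\rangle - \frac{1}{\lambda}\phi(\x),
\end{align*}
which is well-defined for $\u \in \cl{\dom\phi} = \cl{C}$ because $\x \in C = \interior\dom\phi$ and $\phi$ is $\C^1$ there, so $\nabla\phi(\x)$ exists. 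Note that the final constant term plays no role in the argmin.

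Next, I would verify that $h_{\x}$ is proper and lower semicontinuous on $\cl{C}$. Lower semicontinuity follows from the l.s.c.\ of $\phi$ and $g$ (Assumption~\ref{assumption:setting}(1),(3)) combined with continuity of the affine term. Properness follows because $\dom g \cap C \neq \emptyset$ (Assumption~\ref{assumption:setting}(3)) and $C \subset \dom \phi$, so we can find a point in $\cl{C}$ where $h_{\x}$ is finite; moreover, $h_{\x} > -\infty$ since $\Psi$ is bounded below by Assumption~\ref{assumption:setting}(4) up to an additive linear perturbation of at most linear growth.

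For nonemptiness and compactness of the minimizer set, the key is coercivity. By Assumption~\ref{assumption:setting}(5) applied with $\lambda$ itself as the parameter, the function $\phi + \lambda g$ is supercoercive, so $\frac{1}{\lambda}\phi + g = \frac{1}{\lambda}(\phi + \lambda g)$ is supercoercive as well. Since the remaining term $\langle \nabla f(\x) - \frac{1}{\lambda}\nabla\phi(\x),\, \u - \x \rangle$ has at most linear growth in $\|\u\|$, the supercoercive term dominates, giving $h_{\x}(\u) \to +\infty$ as $\|\u\| \to +\infty$. A proper l.s.c.\ coercive function on the closed set $\cl{C}$ attains its infimum and has nonempty, compact sublevel sets, so $\mathcal{T}_\lambda(\x)$ is a nonempty, compact subset of $\cl{C}$.

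Finally, Assumption~\ref{assumption:bregman-mapping-well-defined} directly upgrades this to $\mathcal{T}_\lambda(\x)\subset C$, concluding the proof. The only mildly technical step is the coercivity argument, where one must carefully exploit \emph{super}coercivity (rather than plain coercivity) of $\phi + \lambda g$ to absorb the affine term arising from both $\langle \nabla f(\x), \cdot\rangle$ and the linearization of $\phi$ at $\x$; this is precisely why Assumption~\ref{assumption:setting}(5) is stated as supercoercivity for every $\lambda>0$ rather than mere coercivity.
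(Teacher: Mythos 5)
Your argument is correct and takes essentially the same route as the paper, which omits the proof precisely because it coincides with Lemma~3.1 of Bolte~\etal: expand $D_\phi(\u,\x)$ so the subproblem objective becomes $g + \frac{1}{\lambda}\phi$ plus an affine term, use supercoercivity of $\phi + \lambda g$ to dominate that affine term, apply the Weierstrass existence theorem to get a nonempty compact argmin, and invoke Assumption~\ref{assumption:bregman-mapping-well-defined} to place it inside $C$. One small simplification: $h_{\x} > -\infty$ follows immediately from properness of $g$ and $\phi$ together with finiteness of the affine term, so the appeal to the lower bound on $\Psi$ in your properness step is unnecessary.
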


Given an auxiliary function $\hat{f}$, we propose the majorization-minimization Bregman proximal gradient algorithm (MMBPG) to minimize the auxiliary function $\hat{f}_k:\R^n \to (-\infty,+\infty]$ at the $k$th iteration, which can be constructed from $\hat{f}$.
Now we are ready to describe our algorithm for solving~\eqref{prob:general} as Algorithm~\ref{alg:mmbpg}.

\begin{algorithm}[!t]
    \caption{Majorization-minimization Bregman proximal gradient algorithm (MMBPG)}
    \label{alg:mmbpg}
    \Input{$\phi\in\mathcal{G}(C)$ such that the smooth adaptable property for the pair $(\hat{f}_k,\phi)$ holds on $C$.
    An initial point $\x^0\in\interior\dom\phi$.
    }
    \For{$k = 0, 1, 2, \ldots,$}{
        Take $\lambda_k > 0$ and compute
        \begin{align}
            \x^{k+1}
            = \argmin_{\x\in\cl{C}}&
            \left\{\langle\nabla \hat{f}_k(\x^k), \x - \x^k\rangle + g(\x) + \frac{1}{\lambda_k} D_\phi(\x,\x^k)\right\}.\label{subprob:mmbpg}
        \end{align}
    }
\end{algorithm}

Because MMBPG minimizes the auxiliary function with a regularization term $g$, not the objective function $\Psi$, we need to show that a sequence generated by MMBPG decreases the objective function value.
\begin{assumption}\label{assumption:l-smad}
    Let $\hat{f}:\R^n \to (-\infty,+\infty]$ be an auxiliary function of $f$.
    The pair $(\hat{f}, \phi)$ is $L$-smad on $C$.
\end{assumption}
Assumption~\ref{assumption:l-smad} implies that $(\hat{f}_k, \phi)$ is $L_k$-smad for $L_k \leq L$ for any $k \geq 0$.
In Theorem~\ref{theorem:l-smad}, for KL-NMF~\eqref{prob:klnmf}, we will show that there exist $L_k$, $L$, and $\phi$ such that Assumption~\ref{assumption:l-smad} holds.
Now, we establish the monotonically decreasing property of MMBPG for~\eqref{prob:general}.
\begin{lemma}
\label{lemma:decreasing-bpg}
Suppose that Assumptions{\rm~\ref{assumption:setting},~\ref{assumption:bregman-mapping-well-defined}}, and~{\rm\ref{assumption:l-smad}} hold.
For any $\x^k\in C$, let $\x^{k+1}\in C$ be defined by~\eqref{subprob:mmbpg}.
It holds that
\begin{align}
    \lambda_k\Psi(\x^{k+1}) \leq \lambda_k\Psi(\x^k) - (1 - \lambda_k L_k)D_\phi(\x^{k+1}, \x^k).\label{ineq:decreasing-property}
\end{align}
In addition, when $0 < \lambda_k L_k < 1$ for all $k \geq 0$, the objective function value $\Psi$ is ensured to be decreasing.
\end{lemma}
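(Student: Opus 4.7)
The plan is to combine three standard ingredients: the optimality of $\x^{k+1}$ in the subproblem~\eqref{subprob:mmbpg}, the extended descent lemma (Lemma~\ref{lemma:extended-descent}) applied to the $L_k$-smad pair $(\hat{f}_k,\phi)$, and the defining properties of the auxiliary function $\hat{f}_k$ at the base point $\x^k$.

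First I would use the fact that $\x^{k+1}$ minimizes the subproblem to compare its objective value with that at the feasible point $\x^k$. Since $D_\phi(\x^k,\x^k) = 0$, this yields
\begin{align*}
\langle \nabla \hat{f}_k(\x^k), \x^{k+1} - \x^k \rangle + g(\x^{k+1}) + \frac{1}{\lambda_k} D_\phi(\x^{k+1},\x^k) \leq g(\x^k).
\end{align*}
Next, I would invoke Assumption~\ref{assumption:l-smad} together with Lemma~\ref{lemma:extended-descent} applied to $(\hat{f}_k,\phi)$, which gives
\begin{align*}
\hat{f}_k(\x^{k+1}) \leq \hat{f}_k(\x^k) + \langle \nabla \hat{f}_k(\x^k), \x^{k+1} - \x^k \rangle + L_k D_\phi(\x^{k+1},\x^k).
\end{align*}
Adding these two inequalities and multiplying through by $\lambda_k > 0$ produces
\begin{align*}
\lambda_k\bigl(\hat{f}_k(\x^{k+1}) + g(\x^{k+1})\bigr) \leq \lambda_k\bigl(\hat{f}_k(\x^k) + g(\x^k)\bigr) - (1 - \lambda_k L_k) D_\phi(\x^{k+1},\x^k).
\end{align*}

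Finally, I would invoke the auxiliary function properties of $\hat{f}_k$ at $\x^k$: the majorization $f(\x^{k+1}) \leq \hat{f}_k(\x^{k+1})$ bounds the left-hand side from below by $\lambda_k \Psi(\x^{k+1})$, while the tightness condition $f(\x^k) = \hat{f}_k(\x^k)$ turns the first term on the right-hand side into $\lambda_k \Psi(\x^k)$. This yields precisely~\eqref{ineq:decreasing-property}. For the monotone decrease, observe that $D_\phi(\x^{k+1},\x^k) \geq 0$ by convexity of $\phi$, so whenever $0 < \lambda_k L_k < 1$ the right-hand side is bounded above by $\lambda_k \Psi(\x^k)$, and dividing by $\lambda_k$ gives $\Psi(\x^{k+1}) \leq \Psi(\x^k)$.

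The proof is essentially routine; the only subtle point to highlight is that the gradient condition $\nabla f(\x^k) = \nabla \hat{f}_k(\x^k)$ in the definition of the auxiliary function is not actually needed for this particular lemma, only the equality of function values at $\x^k$ and the majorization inequality elsewhere. The gradient-matching condition will instead be important later, when relating stationarity of $\Psi$ to the limit points of the MMBPG iterates.
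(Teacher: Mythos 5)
Your proposal is correct and follows essentially the same route as the paper's proof: optimality of $\x^{k+1}$ in the subproblem tested against $\x^k$, the extended descent lemma for the $L_k$-smad pair $(\hat{f}_k,\phi)$, and then the auxiliary-function properties $f(\x^k)=\hat{f}_k(\x^k)$ and $f\leq\hat{f}_k$ to pass from $\hat{f}_k$ back to $\Psi$. Your closing observation that the gradient-matching condition $\nabla f(\x^k)=\nabla\hat{f}_k(\x^k)$ is not needed here (only later, for stationarity of limit points) is accurate but does not change the argument.
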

\begin{proof}
From the global optimality of~\eqref{subprob:mmbpg}, we have
\begin{align*}
    \langle\nabla \hat{f}_k(\x^k), \x^{k+1} - \x^k\rangle + g(\x^{k+1}) + \frac{1}{\lambda_k} D_\phi(\x^{k+1},\x^k)\leq g(\x^k).
\end{align*}
Using Lemma~\ref{lemma:extended-descent} and the above inequality, we obtain
\begin{align}
    \hat{f}_k(\x^{k+1}) + g(\x^{k+1}) &\leq \hat{f}_k(\x^k) + \langle\nabla \hat{f}_k(\x^k), \x^{k+1} - \x^k\rangle\nonumber\\
    &\quad+ g(\x^{k+1}) + L_kD_\phi(\x^{k+1},\x^k)\nonumber\\
    &\leq \hat{f}_k(\x^k) + g(\x^k) + \left(L_k - \frac{1}{\lambda_k}\right)D_\phi(\x^{k+1},\x^k)\nonumber\\
    &= f(\x^k) + g(\x^k) - \left(\frac{1}{\lambda_k} - L_k \right)D_\phi(\x^{k+1},\x^k),\label{ineq:decreasing-hff}
\end{align}
where the last equation holds due to $\hat{f}_k(\x^k) = f(\x^k)$.
Recalling the relationship between $f$ and $\hat{f}$, it holds that $f(\x) \leq \hat{f}_k(\x)$ for any $\x\in C$.
Therefore, from~\eqref{ineq:decreasing-hff}, we obtain
\begin{align*}
    \Psi(\x^{k+1}) \leq \Psi(\x^k) - \left(\frac{1}{\lambda_k} - L_k \right)D_\phi(\x^{k+1},\x^k),
\end{align*}
which implies~\eqref{ineq:decreasing-property}.
The last statement follows $0 < \lambda_k L_k < 1$ for all $k \geq 0$.
\qed
\end{proof}

Since we have the monotonically decreasing property, we will show that MMBPG converges to a stationary point of~\eqref{prob:general}.
Inspired by Fermat's rule \cite[Theorem 10.1]{Rockafellar1997-zb}, using the limiting subdifferential, we define a stationary point, also called a critical point~\cite{Bolte2018-zt}.
\begin{definition}
    We say that $\tilde{\x} \in\R^n$ is a \emph{stationary point} of~\eqref{prob:general} if $\zero\in\partial\Psi(\tilde{\x})$.
\end{definition}
Note that KKT points of~\eqref{prob:klnmf} are stationary points of $\min_{\cl{C}} D(\X, \W\H) + g(\W, \H)$ with $\cl{C} = \R_+^{m \times r}\times\R_+^{r \times n}$ (see also Section~\ref{subsec:properties-kl-nmf}).
To establish global convergence, we make additional assumptions as follows.
\begin{assumption}\label{assumption:global-convergence}\quad
    \begin{enumerate}[label=\normalfont(\roman*)]
        \item $\phi$ is $\sigma$-strongly convex.
        \item $\nabla\hat{f}_k$ and $\nabla\phi$ are Lipschitz continuous on any compact subset of $C$.\label{assumption:lip-compact}
        \item The objective function $\Psi$ is level-bounded; \ie, for any $r \in\R$, the level sets $\{\x\in C \mid \Psi(\x)\leq r\}$ are bounded.\label{assumption:level-bounded}
    \end{enumerate}
\end{assumption}
Assumption~\ref{assumption:global-convergence} is also standard for Bregman-type algorithms~\cite[Assumption D]{Bolte2018-zt},~\cite[Assumption 4]{Takahashi2022-ml}, and~\cite[Assumption 19]{Takahashi2024-ej}.
Now, we establish global subsequential convergence and global convergence.

\begin{theorem}[Convergence of MMBPG]\label{theorem:covergence-mmbpg}
    Suppose that Assumptions~{\rm\ref{assumption:setting},~\ref{assumption:bregman-mapping-well-defined},~\ref{assumption:l-smad}}, and~{\rm\ref{assumption:global-convergence}} hold.
    Let $\{\x^k\}_k$ be a sequence generated by MMBPG with $0 < \lambda_k L_k < 1$ for solving~\eqref{prob:general}.
    Then, the following statements hold.
    \begin{enumerate}
        \item Global subsequential convergence: Any accumulation point of $\{\x^k\}_k$ is a stationary point of~\eqref{prob:general}.
        \item Global convergence: Suppose that $\Psi$ is a Kurdyka--\L{}ojasiewicz function.
        Then, the sequence $\{\x^k\}_k$ has finite length, \ie, $\sum_{k=1}^{\infty}\|\x^{k+1} - \x^k\| < \infty$, and converges to a stationary point of~\eqref{prob:general}.
    \end{enumerate}
\end{theorem}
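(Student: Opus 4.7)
The plan is to follow the Bregman KL framework of~\cite{Bolte2018-zt}, adapted so that the majorizing surrogate $\hat{f}_k$ is allowed to change with $k$. The key observation that lets us carry the standard argument through is the ``contact'' identity $\nabla \hat{f}_k(\x^k) = \nabla f(\x^k)$, which turns every bound naturally expressed through $\nabla \hat{f}_k$ into a bound expressed through $\nabla f$.

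For the subsequential-convergence statement, I would first telescope the sufficient-decrease inequality~\eqref{ineq:decreasing-property} from Lemma~\ref{lemma:decreasing-bpg}. Using $\Psi\geq v>-\infty$, the $\sigma$-strong convexity of $\phi$ (so that $D_\phi(\x^{k+1},\x^k)\geq \tfrac{\sigma}{2}\|\x^{k+1}-\x^k\|^2$), and $\lambda_k L_k<1$ bounded away from $1$, this yields $\sum_k\|\x^{k+1}-\x^k\|^2<\infty$, hence $\|\x^{k+1}-\x^k\|\to 0$. Monotonicity of $\{\Psi(\x^k)\}$ together with Assumption~\ref{assumption:global-convergence}\ref{assumption:level-bounded} confines $\{\x^k\}$ to a compact set $K\subset C$, so accumulation points exist. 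For any accumulation point $\bar{\x}$ with $\x^{k_j}\to\bar{\x}$, I would write the first-order optimality condition for~\eqref{subprob:mmbpg},
\[
\zero \in \nabla \hat{f}_{k_j}(\x^{k_j}) + \partial g(\x^{k_j+1}) + \tfrac{1}{\lambda_{k_j}}\bigl(\nabla\phi(\x^{k_j+1})-\nabla\phi(\x^{k_j})\bigr),
\]
substitute $\nabla \hat{f}_{k_j}(\x^{k_j})=\nabla f(\x^{k_j})$, and pass to the limit using the closedness of $\partial g$, Assumption~\ref{assumption:global-convergence}\ref{assumption:lip-compact}, and $\|\x^{k_j+1}-\x^{k_j}\|\to 0$; this gives $\zero\in\nabla f(\bar{\x})+\partial g(\bar{\x})\subseteq\partial\Psi(\bar{\x})$. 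A standard sandwich via the subproblem's optimality tested at $\u=\bar{\x}$ combined with lower semicontinuity of $g$ then upgrades this to $\Psi(\x^{k_j})\to\Psi(\bar{\x})$, which will be used in the KL step.

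For the global-convergence statement under the Kurdyka--\L{}ojasiewicz assumption, I would verify the three abstract ingredients of the Attouch--Bolte--Svaiter recipe~\cite{Attouch2010-nr}. \emph{(H1) Sufficient decrease:} Lemma~\ref{lemma:decreasing-bpg} plus strong convexity gives $\Psi(\x^k)-\Psi(\x^{k+1})\geq \tfrac{\sigma(1-\lambda_k L_k)}{2\lambda_k}\|\x^{k+1}-\x^k\|^2$. \emph{(H2) Subgradient bound:} from the optimality condition above together with $\nabla\hat{f}_k(\x^k)=\nabla f(\x^k)$, the element
\[
\w^{k+1}\;:=\;\nabla f(\x^{k+1})-\nabla f(\x^k)-\tfrac{1}{\lambda_k}\bigl(\nabla\phi(\x^{k+1})-\nabla\phi(\x^k)\bigr)
\]
lies in $\partial\Psi(\x^{k+1})$, so Assumption~\ref{assumption:global-convergence}\ref{assumption:lip-compact} applied on $K$ yields $\|\w^{k+1}\|\leq M\|\x^{k+1}-\x^k\|$ for some uniform $M>0$ (assuming $\lambda_k$ stays bounded, which is implicit in $\lambda_k L_k<1$ together with a uniform lower bound on $L_k$; if needed this can be added explicitly). \emph{(H3) Continuity along the sequence:} handled in the subsequential part. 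Invoking the uniform KL inequality on the (compact, connected) accumulation set and concavity of the desingularizer $\psi$ in the standard telescoping argument then yields $\sum_k\|\x^{k+1}-\x^k\|<\infty$; Cauchyness plus (i) gives convergence of the whole sequence to a stationary point.

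The main obstacle I expect is in step \emph{(H2)}: since $\hat{f}_k$ is replaced at every iteration, we cannot directly compare $\nabla\hat{f}_k(\x^k)$ to $\nabla\hat{f}_k(\x^{k+1})$ or to $\nabla\hat{f}_{k+1}(\x^{k+1})$, and a naive Lipschitz estimate on $\hat{f}_k$ is not uniform in $k$. The identity $\nabla\hat{f}_k(\x^k)=\nabla f(\x^k)$ is the crucial device that reduces the estimate to a single Lipschitz bound on $\nabla f$ over the compact set $K$, thereby sidestepping any dependence on the sequence of surrogates. A secondary bookkeeping point is to absorb the constraint $\x\in\cl{C}$ into $g$ as $\tilde g=g+\delta_{\cl{C}}$ throughout, so that ``$\partial g$'' in the optimality condition is really $\partial\tilde g$ and the stationary-point conclusion $\zero\in\partial\Psi$ has the intended meaning.
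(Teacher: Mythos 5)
Your proposal is correct and follows essentially the same route as the paper: the same telescoped sufficient-decrease plus strong convexity to get $\|\x^{k+1}-\x^k\|\to 0$, the same use of the contact identity $\nabla\hat{f}_k(\x^k)=\nabla f(\x^k)$ in the subproblem's optimality condition to obtain both the limiting stationarity and the relative-error subgradient bound, and the same appeal to the Attouch--Bolte--Svaiter KL machinery (which the paper invokes by citing Bolte et al.\ and Takahashi et al.). The issue you flag about $\lambda_k$ being bounded away from $1/L_k$ is handled identically in the paper, which likewise posits $\frac{1}{\lambda_k}-L_k\geq c$ with a suggested explicit choice of $\lambda_k$.
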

\begin{proof}
    (i) From Lemma~\ref{lemma:decreasing-bpg}, we have $\Psi(\x^k) \leq \Psi(\x^0)$ for all $k \in \N$.
    The objective function $\Psi$ is also level-bounded due to Assumption~\ref{assumption:global-convergence}\ref{assumption:level-bounded}.
    Therefore, $\{\x^k\}_k$ is bounded.

    Using~\eqref{ineq:decreasing-property}, we have
    \begin{align}
        \Psi(\x^k) - \Psi(\x^{k+1}) &\geq\left(\frac{1}{\lambda_k} - L_k\right)D_\phi(\x^{k+1}, \x^k)\nonumber\\
        &\geq \left(\frac{1}{\lambda_k} - L_k\right)\frac{\sigma}{2}\|\x^{k+1} - \x^k\|^2,\label{ineq:decreasing-use-strongly-convex}
    \end{align}
    where the last inequality holds because of Assumption~\ref{assumption:global-convergence}(i).
    Note that there exists $c > 0$ such that $\frac{1}{\lambda_k} - L_k \geq c$ for all $k\in\N$.
    For example, we can choose $\lambda_k = \frac{1}{c + L_k} \geq \frac{1}{c + L}$ because $L_k \leq L$ holds from Assumption~\ref{assumption:l-smad}. We find that $\lambda_kL_k = \frac{L_k}{c + L_k} \leq \frac{L}{c + L} < 1$ holds because $L_k \leq L$ holds and $\frac{x}{c + x}$ is increasing over $x \geq 0$.
    Summing~\eqref{ineq:decreasing-use-strongly-convex} from $k = 0$ to $\infty$ and using Assumption~\ref{assumption:setting}(iii), we have
    \begin{align*}
        \sum_{k = 1}^\infty\frac{\sigma c}{2}\|\x^{k+1} - \x^k\|^2\leq\Psi(\x^0) - \liminf_{n\to\infty}\Psi(\x^n) \leq\Psi(\x^0) -  v < \infty,
    \end{align*}
    which shows that $\lim_{k\to\infty}\|\x^{k+1} - \x^k\| = 0$.

    Next, we consider the first-order optimality condition of~\eqref{subprob:mmbpg}, that is,
    \begin{align*}
        \zero\in\nabla \hat{f}_k(\x^k) + \partial g(\x^{k+1}) + \frac{1}{\lambda_k}(\nabla\phi(\x^{k+1}) - \nabla\phi(\x^k)).
    \end{align*}
    We have
    \begin{align}
        \frac{1}{\lambda_k}(\nabla\phi(\x^k) - \nabla\phi(\x^{k+1}))\in\nabla f(\x^k) + \partial g(\x^{k+1}).\label{eq:first-order-opt-phi}
    \end{align}
    Because $\{\x^k\}_k$ is bounded, there exist an accumulation point $\tilde{\x}$ of $\{\x^k\}_k$ and a subsequence $\{\x^{k_j}\}_{j}$ such that $\lim_{j\to\infty}\x^{k_j} = \tilde{\x}$.
    Substituting $k_j$ for $k$ in~\eqref{eq:first-order-opt-phi}, we have $\frac{1}{\lambda_k}(\nabla\phi(\x^{k_j}) - \nabla\phi(\x^{k_j+1})) \to \zero$ because there exists $A_0 > 0$ such that $\frac{1}{\lambda_{k_j}}\|\nabla\phi(\x^{k_j}) - \nabla\phi(\x^{k_j+1})\| \leq A_0(c + L)\|\x^{k_j} - \x^{k_j+1}\|\to0$ from $\lim_{j\to\infty}\|\x^{k_j+1} - \x^{k_j}\| = 0$, $\frac{1}{\lambda_{k_j}} \leq c + L$, and Assumption~\ref{assumption:global-convergence}\ref{assumption:lip-compact} ($\{\x^{k_j}\}_{j} \cup \{\tilde{x}\}$ is compact).
    Because of Assumption~\ref{assumption:setting} and $\lim_{j\to\infty}\|\x^{k_j+1} - \x^{k_j}\| = 0$, we have $\zero\in\nabla f(\tilde{\x}) + \partial g(\tilde{\x})$, which shows $\tilde{\x}$ is a stationary point of~\eqref{prob:general}.

    (ii) In the same way as~\cite[Theorems 4.1 and 6.2]{Bolte2018-zt} and~\cite[Theorem 2]{Takahashi2022-ml}, we can establish global convergence.
    There exists $k_0 > 0$ such that $\x^k$ is sufficiently close to the set of accumulation points due to (i) (see also the proofs of~\cite[Theorem 6.2]{Bolte2018-zt} and~\cite[Theorem 2]{Takahashi2022-ml}).
    From the first-order optimality condition of~\eqref{subprob:mmbpg} and $\nabla f(\x^k) = \nabla \hat{f}_k(\x^k)$, we have
    \begin{align*}
        \nabla f(\x^{k+1}) - \nabla f(\x^k) + \frac{1}{\lambda_k}(\nabla\phi(\x^k) - \nabla\phi(\x^{k+1}))\in\nabla f(\x^{k+1}) + \partial g(\x^{k+1}),
    \end{align*}
    which implies $\dist(\zero, \nabla f(\x^k) + \partial g(\x^k))\leq A_1\|\x^k - \x^{k-1}\|$ for $k > k_0$.
    Therefore, $\lim_{k\to\infty}\dist(\zero, \nabla f(\x^k) + \partial g(\x^k)) = 0$ due to $\lim_{k\to\infty}\|\x^k - \x^{k-1}\| = 0$.
    The rest of the proof is the same to~\cite[Theorems 4.1 and 6.2]{Bolte2018-zt} and~\cite[Theorem 2]{Takahashi2022-ml}.
    \qed
\end{proof}

\subsection{Majorization-Minimization Bregman Proximal Gradient Algorithm with Extrapolation} 

BPG is known to be slow in applications.
Zhang \etal~\cite{Zhang2019-ia} proposed the Bregman proximal gradient algorithm with extrapolation (BPGe).
We use the momentum parameter $\beta_k$ defined by~\cite{Nesterov1983-xn} and the restart condition proposed in~\cite{Takahashi2022-ml}.
Takahashi~\etal~\cite{Takahashi2022-ml} proposed the adaptive restart scheme, \ie, it resets $\theta_k$ defined in line~\ref{alg:theta-beta} of Algorithm~\ref{alg:mmbpge} whenever it holds that
\begin{align}
    D_\phi(\x^k, \y^k) > \rho D_\phi(\x^{k-1}, \x^k).\label{ineq:adaptive-restart}
\end{align}
This inequality requires less computation than the line search procedure proposed by~\cite{Zhang2019-ia}.
Now we describe the majorization-minimization Bregman proximal gradient algorithm with extrapolation (MMBPGe) for solving~\eqref{prob:klnmf} as Algorithm~\ref{alg:mmbpge}.

\begin{algorithm}[!t]
    \caption{Majorization-minimization Bregman proximal gradient algorithm with extrapolation (MMBPGe)}
    \label{alg:mmbpge}
    \Input{$\phi\in\mathcal{G}(C)$ such that the smooth adaptable property for the pair $(\hat{f}_k,\phi)$ holds on $C$.
    An initial point $\x^0 = \x^{-1}\in\interior\dom\phi$.
    $\theta_0 = \theta_{-1} = 1, \rho\in(0,1]$
    }
    \For{$k = 0, 1, 2, \ldots,$}{
        Take $\lambda_k >0$ and compute \label{alg:theta-beta}
        \begin{align*}
            \beta_k &= \frac{\theta_{k-1} - 1}{\theta_k} \quad \text{with} \quad \theta_k = \frac{1 + \sqrt{1 + 4\theta_{k-1}^2}}{2},\\
            \y^k &= \x^k + \beta_k(\x^k - \x^{k-1}).
        \end{align*}
        
        \If{$\y^k\notin C$ or $D_\phi(\x^k, \y^k) > \rho D_\phi(\x^{k-1}, \x^k)$}{
            Set $\y^k = \x^k$ and $\theta_k = \theta_{k-1} = 1$.
        }
        Compute
        \begin{align}
            \x^{k+1}
            = \argmin_{\y\in\cl{C}}&
            \left\{\langle\nabla \hat{f}_k(\y^k), \y - \y^k\rangle + g(\y) + \frac{1}{\lambda_k} D_\phi(\y,\y^k)\right\}.\label{subprob:mmbpge}
        \end{align}
    }
\end{algorithm}

It is difficult to show the monotonically decreasing property of the objective function by algorithms with extrapolation.
Instead, we establish the monotonically decreasing property of the potential function value by MMBPGe.
Let the potential function $\Phi_M$ be defined by
\begin{align*}
    \Phi_M(\x) := \Psi(\x) + MD_\phi(\y, \x), \quad M > 0.
\end{align*}
We also assume that $g$ is convex.
\begin{assumption}
    The function $g$ is convex.\label{assumption:g-convex}
\end{assumption}
We obtain the monotonically decreasing property of the potential function value as follows.
\begin{lemma}
\label{lemma:decreasing-bpge}
Suppose that Assumptions~{\rm\ref{assumption:setting},~\ref{assumption:bregman-mapping-well-defined},~\ref{assumption:l-smad}}, and~{\rm\ref{assumption:g-convex}} hold.
For any $\x^k\in C$ and $\y^k\in C$, let $\y^k := \x^k + \beta_k(\x^k - \x^{k-1})\in C$ with $\{\beta_k\}_k\subset[0,1)$ and $\x^{k+1}\in C$ be defined by~\eqref{subprob:mmbpge}.
Then, it holds that
\begin{align}
    \lambda_k\Psi(\x^{k+1}) &\leq \lambda_k\Psi(\x^k) + \left(1 + \lambda_k L_k\right)D_\phi(\x^k,\y^k)\nonumber\\
    &\quad- (1 - \lambda_k L_k)D_\phi(\x^{k+1}, \y^k) - D_\phi(\x^k, \x^{k+1}).\label{ineq:decreasing-property-bpge-obj}
\end{align}
Furthermore, when $\{\beta_k\}_k\subset[0,1)$ is given by the adaptive restart scheme~\eqref{ineq:adaptive-restart}, it holds that
\begin{align}
    \lambda_k\Phi_M(\x^{k+1}) &\leq \lambda_k\Phi_M(\x^k)- (\lambda_k M - \rho\left(1 + \lambda_kL_k\right))D_\phi(\x^{k-1},\x^k)\nonumber\\
    &\quad-\left(1 - \lambda_k L_k\right)D_\phi(\x^{k+1},\y^k) - \left(1 - \lambda_k M\right) D_\phi(\x^k,\x^{k+1}).\label{ineq:decreasing-property-bpge-lyap}
\end{align}
In addition, when $0 < \lambda_k L_k < 1$ and $\rho(1 + \lambda_k L_k) < \lambda_k M < 1$ for all $k \geq 0$, the potential function value $\Phi_M$ is ensured to be decreasing.
\end{lemma}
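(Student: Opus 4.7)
The plan is to prove \eqref{ineq:decreasing-property-bpge-obj} first from scratch, then get \eqref{ineq:decreasing-property-bpge-lyap} as an easy corollary by adding $\lambda_k M[D_\phi(\x^k,\x^{k+1}) - D_\phi(\x^{k-1},\x^k)]$ to both sides and invoking the restart condition, and finally read off the monotone decrease of $\Phi_M$ from the signs of the coefficients.

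For \eqref{ineq:decreasing-property-bpge-obj}, I would start from the first-order optimality condition of the strictly convex subproblem \eqref{subprob:mmbpge}: there exists $\xi^{k+1}\in\partial g(\x^{k+1})$ with
\begin{equation*}
\xi^{k+1} = -\nabla\hat{f}_k(\y^k) - \tfrac{1}{\lambda_k}\bigl(\nabla\phi(\x^{k+1})-\nabla\phi(\y^k)\bigr).
\end{equation*}
Convexity of $g$ (Assumption~\ref{assumption:g-convex}) applied at $\u = \x^k$ gives
\begin{equation*}
g(\x^{k+1}) \leq g(\x^k) + \langle\nabla\hat{f}_k(\y^k),\x^k-\x^{k+1}\rangle + \tfrac{1}{\lambda_k}\langle\nabla\phi(\x^{k+1})-\nabla\phi(\y^k),\x^k-\x^{k+1}\rangle.
\end{equation*}
I then rewrite the last inner product via the three-point identity \eqref{eq:three-point} with $\X=\x^k$, $\Y=\x^{k+1}$, $\Z=\y^k$, producing $D_\phi(\x^k,\y^k)-D_\phi(\x^k,\x^{k+1})-D_\phi(\x^{k+1},\y^k)$.

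The second ingredient is to bound $\hat{f}_k(\x^{k+1})$. Applying Lemma~\ref{lemma:extended-descent} in both directions to the $L_k$-smad pair $(\hat{f}_k,\phi)$ yields
\begin{align*}
\hat{f}_k(\x^{k+1}) &\leq \hat{f}_k(\y^k)+\langle\nabla\hat{f}_k(\y^k),\x^{k+1}-\y^k\rangle+L_kD_\phi(\x^{k+1},\y^k),\\
\hat{f}_k(\y^k)+\langle\nabla\hat{f}_k(\y^k),\x^k-\y^k\rangle &\leq \hat{f}_k(\x^k)+L_kD_\phi(\x^k,\y^k).
\end{align*}
The use of the two-sided descent lemma is the main subtlety: the reverse inequality is what generates the $(1+\lambda_kL_k)D_\phi(\x^k,\y^k)$ term that distinguishes the extrapolated case from Lemma~\ref{lemma:decreasing-bpg}. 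Summing the bound on $g(\x^{k+1})$ with the bound on $\hat{f}_k(\x^{k+1})$, collecting the two inner products $\langle\nabla\hat{f}_k(\y^k),\x^{k+1}-\y^k\rangle+\langle\nabla\hat{f}_k(\y^k),\x^k-\x^{k+1}\rangle=\langle\nabla\hat{f}_k(\y^k),\x^k-\y^k\rangle$, and then using the second line above to eliminate this inner product, I arrive at
\begin{equation*}
\hat{f}_k(\x^{k+1})+g(\x^{k+1}) \leq \hat{f}_k(\x^k)+g(\x^k)+\bigl(L_k+\tfrac{1}{\lambda_k}\bigr)D_\phi(\x^k,\y^k)+\bigl(L_k-\tfrac{1}{\lambda_k}\bigr)D_\phi(\x^{k+1},\y^k)-\tfrac{1}{\lambda_k}D_\phi(\x^k,\x^{k+1}).
\end{equation*}
The auxiliary-function properties $\hat{f}_k(\x^{k+1})\geq f(\x^{k+1})$ and $\hat{f}_k(\x^k)=f(\x^k)$ let me replace $\hat{f}_k$ by $f$, and multiplying through by $\lambda_k$ yields \eqref{ineq:decreasing-property-bpge-obj}.

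For \eqref{ineq:decreasing-property-bpge-lyap}, I read the potential function along the trajectory as $\Phi_M(\x^k)=\Psi(\x^k)+MD_\phi(\x^{k-1},\x^k)$. Adding $\lambda_kM\bigl[D_\phi(\x^k,\x^{k+1})-D_\phi(\x^{k-1},\x^k)\bigr]$ to both sides of \eqref{ineq:decreasing-property-bpge-obj} and using the adaptive restart condition \eqref{ineq:adaptive-restart} in the form $(1+\lambda_kL_k)D_\phi(\x^k,\y^k)\leq\rho(1+\lambda_kL_k)D_\phi(\x^{k-1},\x^k)$ directly rearranges into \eqref{ineq:decreasing-property-bpge-lyap}. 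For the final monotonicity claim, the stated step-size conditions $0<\lambda_kL_k<1$ and $\rho(1+\lambda_kL_k)<\lambda_kM<1$ make each of the three coefficients $\lambda_kM-\rho(1+\lambda_kL_k)$, $1-\lambda_kL_k$, and $1-\lambda_kM$ strictly positive, so nonnegativity of the Bregman distances forces $\Phi_M(\x^{k+1})\leq\Phi_M(\x^k)$.
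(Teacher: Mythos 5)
Your proposal is correct and follows essentially the same route as the paper's proof: first-order optimality of the subproblem plus convexity of $g$, the three-point identity with $\X=\x^k$, $\Y=\x^{k+1}$, $\Z=\y^k$, and the two-sided extended descent lemma applied at $\y^k$ (which is indeed the source of the $(1+\lambda_kL_k)D_\phi(\x^k,\y^k)$ term), followed by the restart inequality and a regrouping of the $\lambda_kM$ terms for the potential function. The intermediate bound on $\hat{f}_k(\x^{k+1})+g(\x^{k+1})$ and the final sign analysis match the paper exactly.
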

\begin{proof}
From the first-order optimality condition of~\eqref{subprob:mmbpge}, we have
\begin{align*}
    \zero\in\nabla \hat{f}_k(\y^k) + \partial g(\x^{k+1}) + \frac{1}{\lambda_k}\left(\nabla\phi(\x^{k+1}) - \nabla\phi(\y^k)\right).
\end{align*}
Using the above and the convexity of $g$, we obtain
\begin{align}
    g(\x^k) - g(\x^{k+1}) \geq -\left\langle\nabla \hat{f}_k(\y^k) + \frac{1}{\lambda_k}\left(\nabla\phi(\x^{k+1}) - \nabla\phi(\y^k)\right), \x^k - \x^{k+1}\right\rangle.\label{ineq:decreasing-g}
\end{align}
Applying the three-point identity~\eqref{eq:three-point},
\begin{align*}
    &\left\langle\nabla\phi(\x^{k+1}) - \nabla\phi(\y^k), \x^k - \x^{k+1}\right\rangle\\&\quad= D_\phi(\x^k,\y^k)-D_\phi(\x^k,\x^{k+1}) - D_\phi(\x^{k+1}, \y^k),
\end{align*}
to~\eqref{ineq:decreasing-g}, we have
\begin{align}
    g(\x^k) - g(\x^{k+1}) &\geq -\left\langle\nabla \hat{f}_k(\y^k) , \x^k - \x^{k+1}\right\rangle\nonumber\\
    &\quad- \frac{1}{\lambda_k} \left(D_\phi(\x^k,\y^k)-D_\phi(\x^k,\x^{k+1}) - D_\phi(\x^{k+1}, \y^k)\right).\label{ineq:decreasing-bpge-g}
\end{align}
It follows from Lemma~\ref{lemma:extended-descent} that
\begin{align}
    &\hat{f}_k(\x^k) - \hat{f}_k(\x^{k+1}) - \langle\nabla\hat{f}_k(\y^k),\x^k -\x^{k+1}\rangle\nonumber\\
    &\quad=\hat{f}_k(\x^k) - \hat{f}_k(\y^k) - \langle\nabla\hat{f}_k(\y^k),\x^k -\y^k\rangle - \hat{f}_k(\x^{k+1})\nonumber\\
    &\qquad+ \hat{f}_k(\y^k) + \langle\nabla\hat{f}_k(\y^k),\x^{k+1} - \y^k\rangle\nonumber\\
    &\quad\geq -L_kD_\phi(\x^k,\y^k)-L_kD_\phi(\x^{k+1},\y^k).\label{ineq:decreasing-bpge-f}
\end{align}
We combine~\eqref{ineq:decreasing-bpge-g} and~\eqref{ineq:decreasing-bpge-f} into
\begin{align*}
    &\hat{f}_k(\x^{k+1}) + g(\x^{k+1})\\
    &\quad\leq\hat{f}_k(\x^k) + g(\x^{k+1}) - \langle\nabla\hat{f}_k(\y^k),\x^k -\x^{k+1}\rangle\\
    &\qquad+ L_kD_\phi(\x^k,\y^k)+L_kD_\phi(\x^{k+1},\y^k)\\
    &\quad\leq\hat{f}_k(\x^k) + g(\x^k) + \left(L_k + \frac{1}{\lambda_k}\right)D_\phi(\x^k,\y^k)\\
    &\qquad-\left(\frac{1}{\lambda_k} - L_k\right)D_\phi(\x^{k+1},\y^k) - \frac{1}{\lambda_k} D_\phi(\x^k,\x^{k+1}),
\end{align*}
which proves the desired inequality~\eqref{ineq:decreasing-property-bpge-obj} due to $\hat{f}_k(\x^k) = f(\x^k)$ and $f(\x) \leq \hat{f}_k(\x)$ for any $\x\in C$.

The adaptive restart scheme~\eqref{ineq:adaptive-restart} implies that there exists $\rho\in[0,1)$ such that $D_\phi(\x^k, \y^k) \leq \rho D_\phi(\x^{k-1}, \x^k)$.
It holds that
\begin{align*}
    \lambda_k\left(\Psi(\x^{k+1}) + MD_\phi(\x^k, \x^{k+1})\right)&\leq\lambda_k\left(\Psi(\x^k) + MD_\phi(\x^{k-1}, \x^k)\right)\\
    &\quad- (\lambda_k M - \rho\left(1 + \lambda_kL_k\right))D_\phi(\x^{k-1},\x^k)\\
    &\quad-\left(1 - \lambda_k L_k\right)D_\phi(\x^{k+1},\y^k)\\
    &\quad-\left(1 - \lambda_k M\right) D_\phi(\x^k,\x^{k+1}).
\end{align*}
When $0 < \lambda_k L_k < 1$ and $\rho(1 + \lambda_k L_k) < \lambda_k M < 1$ for all $k \geq 0$, $\lambda_k \Phi_M(\x^{k+1}) \leq \lambda_k \Phi_M(\x^k)$, that is, the potential function value decreases.
\qed
\end{proof}

\begin{remark}
In Lemma~\ref{lemma:decreasing-bpge}, $\rho(1 + \lambda_k L_k) < 1$ always holds if $\rho < 1/2$ and $\lambda_k L_k < 1$.
However, $\rho < 1/2$ may weaken the acceleration by the extrapolation technique.
In practice, \eqref{ineq:decreasing-property-bpge-lyap} holds even if $1/2 \leq \rho < 1$.
We will use $\rho = 0.999$ in numerical experiments.
\end{remark}

We also have the global convergence of MMBPGe in the same way as~Theorem~\ref{theorem:covergence-mmbpg} (see also the proof of~\cite[Theorems 6 and 7, and Appendix]{Takahashi2022-ml}).
In the same way as~\cite[Assumption 8]{Takahashi2022-ml} and~\cite[Assumption 3(iii)]{Zhang2019-ia}, we suppose that there is a bounded subdifferential of $\nabla\phi$.

\begin{assumption}\label{assumption:second-order-subdiff}
    For any compact subset $S\subset\interior\dom\phi$ and any $\x \in S$, there exists $A > 0$ such that $\|\bm{\xi}\|\leq A\|\u\|$ for any $\bm{\xi}\in\partial(\nabla\phi(\x))(\u)$ and any $\u\in\interior\dom\phi$.
\end{assumption}

The second-order subdifferential $\partial(\nabla\phi(\x))(\u)$ is defined by~\cite[Section 1.3.5]{Mordukhovich2006-xd} and~\cite[Section 1.2.1]{Mordukhovich2024-wa}.
When $\phi$ is $\mathcal{C}^2$, $\partial(\nabla\phi(\x))(\u) = \{\nabla^2\phi(\x)\u\}$.
Supposing $\Phi_M$, instead of $\Psi$, is a Kurdyka--\L{}ojasiewicz function, we have the global convergence of MMBPGe.

\begin{theorem}[Convergence of MMBPGe]\label{theorem:convergence-mmbpge}
    Suppose that Assumptions~{\rm\ref{assumption:setting},~\ref{assumption:bregman-mapping-well-defined},~\ref{assumption:l-smad}},~{\rm\ref{assumption:global-convergence}},~{\rm\ref{assumption:g-convex}}, and~{\rm\ref{assumption:second-order-subdiff}} hold.
    Let $\{\x^k\}_k$ be a sequence generated by MMBPGe with $0 < \lambda_k L_k < 1$ and $\rho(1 + \lambda_k L_k) < \lambda_k M < 1$ for solving~\eqref{prob:general}.
    Then, the following statements hold.
    \begin{enumerate}
        \item Global subsequential convergence: Any accumulation point of $\{\x^k\}_k$ is a KKT point of~\eqref{prob:general}.
        \item Global convergence: Suppose that $\Phi_M$ is a Kurdyka--\L{}ojasiewicz function.
        Then, the sequence $\{\x^k\}_k$ has finite length, \ie, $\sum_{k=1}^{\infty}\|\x^{k+1} - \x^k\| < \infty$, and converges to a stationary point of~\eqref{prob:general}.
    \end{enumerate}
\end{theorem}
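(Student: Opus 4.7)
The plan is to mirror the proof of Theorem~\ref{theorem:covergence-mmbpg}, but with the potential function $\Phi_M$ playing the role of $\Psi$, and to follow the extrapolation-plus-KL template of~\cite[Theorem~2]{Takahashi2022-ml}. Throughout, I would use that at iteration $k$ the auxiliary function $\hat{f}_k$ is taken to majorize $f$ at the extrapolated point $\y^k$, so that $\hat{f}_k(\y^k) = f(\y^k)$ and $\nabla\hat{f}_k(\y^k) = \nabla f(\y^k)$; this is the natural analogue of the identity $\hat{f}_k(\x^k)=f(\x^k)$ used in the proof of Lemma~\ref{lemma:decreasing-bpg}.

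For part (i), Lemma~\ref{lemma:decreasing-bpge} shows that $\Phi_M(\x^{k+1})\leq\Phi_M(\x^k)$ under the assumed step-size conditions, so $\{\Phi_M(\x^k)\}_k$ is non-increasing and bounded below by $v$. Since $\Phi_M\geq\Psi$ and $\Psi$ is level-bounded by Assumption~\ref{assumption:global-convergence}\ref{assumption:level-bounded}, the sequence $\{\x^k\}_k$ is bounded. Summing~\eqref{ineq:decreasing-property-bpge-lyap} and using $\sigma$-strong convexity of $\phi$ gives
\begin{align*}
\sum_{k=0}^{\infty}\bigl(\|\x^{k+1}-\y^k\|^2 + \|\x^{k+1}-\x^k\|^2 + \|\x^k-\x^{k-1}\|^2\bigr) < \infty,
\end{align*}
so all three differences tend to zero; together with $\y^k-\x^k=\beta_k(\x^k-\x^{k-1})$ and $\beta_k\in[0,1)$, this also yields $\|\y^k-\x^k\|\to0$. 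Taking a convergent subsequence $\x^{k_j}\to\tilde{\x}$, the first-order optimality of~\eqref{subprob:mmbpge} reads
\begin{align*}
\tfrac{1}{\lambda_{k_j}}\bigl(\nabla\phi(\y^{k_j})-\nabla\phi(\x^{k_j+1})\bigr)-\nabla\hat{f}_{k_j}(\y^{k_j})\in\partial g(\x^{k_j+1}),
\end{align*}
and passing to the limit using continuity of $\nabla f$, the identity $\nabla\hat{f}_{k_j}(\y^{k_j})=\nabla f(\y^{k_j})$, local Lipschitz continuity of $\nabla\phi$ (Assumption~\ref{assumption:global-convergence}\ref{assumption:lip-compact}), the uniform bound $1/\lambda_{k_j}\leq c+L$, and graph closedness of $\partial g$, I conclude that $\zero\in\nabla f(\tilde{\x})+\partial g(\tilde{\x})$.

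For part (ii), I would apply the standard KL finite-length machinery of~\cite[Theorem~6.2]{Bolte2018-zt} and~\cite[Theorem~2]{Takahashi2022-ml} to the augmented function $\tilde{\Phi}_M(\x,\y):=\Psi(\x)+MD_\phi(\y,\x)$ along the pair sequence $\{(\x^k,\x^{k-1})\}_k$. Three ingredients are needed: sufficient decrease, a relative-error estimate, and a continuity/closedness condition. Sufficient decrease follows from~\eqref{ineq:decreasing-property-bpge-lyap} combined with $\sigma$-strong convexity of $\phi$. For the relative-error estimate I would produce an element of $\partial\tilde{\Phi}_M(\x^{k+1},\x^k)$: in the $\x$-slot, one combines the subgradient extracted from the first-order condition for~\eqref{subprob:mmbpge} with the correction $\nabla f(\x^{k+1})-\nabla\hat{f}_k(\y^k)=[\nabla f(\x^{k+1})-\nabla f(\y^k)]+[\nabla f(\y^k)-\nabla\hat{f}_k(\y^k)]$, where the second bracket vanishes and the first is bounded by $\|\x^{k+1}-\y^k\|\leq\|\x^{k+1}-\x^k\|+\|\x^k-\x^{k-1}\|$ via Lipschitz continuity of $\nabla f$; the $\nabla\phi$ contributions arising from $D_\phi$ are then bounded in terms of $\|\x^{k+1}-\x^k\|$ and $\|\x^k-\x^{k-1}\|$ by Assumption~\ref{assumption:second-order-subdiff}. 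The closedness/continuity condition follows from continuity of $\nabla f$ and $\nabla\phi$ together with lower semicontinuity of $g$. The KL inequality applied to $\tilde{\Phi}_M$ then produces $\sum_k\|\x^{k+1}-\x^k\|<\infty$, so $\{\x^k\}_k$ is Cauchy and its limit is a stationary point by part (i).

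The main obstacle, compared with the MMBPG analysis, is the relative-error estimate in (ii). Two features of MMBPGe make it more delicate than Theorem~\ref{theorem:covergence-mmbpg}: the gradient in the subproblem is evaluated at $\y^k$ rather than at $\x^k$, so the mismatch $\nabla f(\x^{k+1})-\nabla\hat{f}_k(\y^k)$ must be split and each piece bounded by consecutive iterate differences; and the potential lives on pairs, so its Bregman-distance part contributes subgradients in both arguments that must be controlled uniformly. Assumption~\ref{assumption:second-order-subdiff} is exactly what converts those $\nabla\phi$-differences into the bounds required to close the KL argument.
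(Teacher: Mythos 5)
Your overall route---monotone decrease of the potential via Lemma~\ref{lemma:decreasing-bpge}, summability of the Bregman terms through $\sigma$-strong convexity, passing to the limit in the optimality condition of~\eqref{subprob:mmbpge}, and the KL finite-length machinery applied to the pair potential $\tilde{\Phi}_M(\x,\y)=\Psi(\x)+MD_\phi(\y,\x)$---is exactly the one the paper intends (it gives no explicit proof, deferring to Theorem~\ref{theorem:covergence-mmbpg} and to \cite[Theorems 6 and 7, and Appendix]{Takahashi2022-ml}). But your opening premise is wrong and it propagates. You anchor the auxiliary function at the extrapolated point, asserting $\hat{f}_k(\y^k)=f(\y^k)$ and $\nabla\hat{f}_k(\y^k)=\nabla f(\y^k)$. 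The paper does the opposite: Section~\ref{sec:mmbpgs-kl-nmf} states that $\alpha^k_{ilj}$ is built from $\Z^k$ ``without $\beta_k$'' precisely to guarantee $f(\W^k,\H^k)=\hat{f}_k(\W^k,\H^k)$, i.e., the anchor is $\x^k$, not $\y^k$. This is not cosmetic: the proof of Lemma~\ref{lemma:decreasing-bpge} converts $\hat{f}_k(\x^k)$ into $f(\x^k)$ using the equality $\hat{f}_k(\x^k)=f(\x^k)$; if $\hat{f}_k$ were anchored at $\y^k$, majorization would only give $\hat{f}_k(\x^k)\ge f(\x^k)$, which points the wrong way, and the decrease of $\Phi_M$ on which your part (i) rests would no longer follow. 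You cannot simultaneously assume $\y^k$-anchoring and cite Lemma~\ref{lemma:decreasing-bpge}.

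The same error reappears in your relative-error estimate in (ii), where you claim the bracket $\nabla f(\y^k)-\nabla\hat{f}_k(\y^k)$ vanishes; under the paper's construction it does not. The repair is to route the splitting through the true anchor,
\begin{align*}
\nabla\hat{f}_k(\y^k)-\nabla f(\x^{k+1}) = \bigl[\nabla\hat{f}_k(\y^k)-\nabla\hat{f}_k(\x^k)\bigr] + \bigl[\nabla f(\x^k)-\nabla f(\x^{k+1})\bigr],
\end{align*}
bounding the first bracket by $\|\y^k-\x^k\|\le\beta_k\|\x^k-\x^{k-1}\|$ via Assumption~\ref{assumption:global-convergence}\ref{assumption:lip-compact} and the second by consecutive iterate differences; with this substitution the rest of your argument, including the use of Assumption~\ref{assumption:second-order-subdiff} to control the $\nabla\phi$ contributions coming from the $D_\phi(\y,\x)$ part of the pair potential, goes through. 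Part (i) also survives once corrected: $\nabla\hat{f}_{k_j}(\y^{k_j})\to\nabla f(\tilde{\x})$ follows from $\nabla\hat{f}_{k_j}(\x^{k_j})=\nabla f(\x^{k_j})$ together with $\|\y^{k_j}-\x^{k_j}\|\to0$ and the local Lipschitz continuity of $\nabla\hat{f}_k$, not from the identity you invoked.
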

\section{Majorization-Minimization Bregman Proximal Gradient Algorithms for KL-NMF}\label{sec:mmbpgs-kl-nmf}
\subsection{Majorization-Minimization Bregman Proximal Gradient Algorithm for KL-NMF}
Recalling the loss function,
\begin{align*}
    f(\W,\H) &:= D(\X, \W\H) = \sum_{i=1}^m\sum_{j=1}^n\left(X_{ij}\log\frac{X_{ij}}{(\W\H)_{ij}} - X_{ij} + (\W\H)_{ij}\right),
\end{align*}
we define the auxiliary function $\hat{f}$.
From the convexity of the negative log function, it holds that
\begin{align*}
    -\log(\W\H)_{ij} = -\log\sum_{l=1}^rW_{il}H_{lj}\leq-\sum_{l=1}^r\alpha_{ilj}\log \frac{W_{il}H_{lj}}{\alpha_{ilj}},
\end{align*}
where $\alpha_{ilj} \geq 0$ and $\sum_l\alpha_{ilj} = 1$.
Setting
\begin{align*}
    \alpha_{ilj} = \frac{W_{il}H_{lj}}{\sum_{l=1}^r W_{il}H_{lj}},
\end{align*}
we obtain the auxiliary function $\hat{f}$~\cite{Lee2000-nl,Marmin2023-jh} given by
\begin{align*}
    \hat{f}(\W,\H) := \sum_{i,j}\left(X_{ij}\log X_{ij} - X_{ij}\sum_{l=1}^r \alpha_{ilj}\log \frac{W_{il}H_{lj}}{\alpha_{ilj}} - X_{ij} + (\W\H)_{ij}\right).
\end{align*}
The multiplicative updates (MU)~\cite{Lee2000-nl} is known to be the alternating minimization of $\hat{f}$.
Given 
\begin{align}
    \alpha_{ilj}^k := \frac{W_{il}^kH_{lj}^k}{\sum_{l=1}^r W_{il}^kH_{lj}^k} \label{def:auxiliary-para}
\end{align}
at the $k$th iteration, we define the auxiliary function $\hat{f}_k$ at the $k$th iteration,
\begin{align*}
    \hat{f}_k(\W,\H) := \sum_{i,j}\left(X_{ij}\log X_{ij} - X_{ij}\sum_{l=1}^r \alpha^k_{ilj}\log \frac{W_{il}H_{lj}}{\alpha^k_{ilj}} - X_{ij} + (\W\H)_{ij}\right).
\end{align*}
Note that $f(\W^k,\H^k) = \hat{f}_k(\W^k, \H^k)$ and $f(\W,\H) \leq \hat{f}_k(\W, \H)$ hold for any $(\W, \H)\in\R_{++}^{m\times r}\times\R_{++}^{r \times n}$.
We see that the objective function is majorized by $\hat{f}_k$, that is, for any $(\W, \H)\in\R_{++}^{m\times r}\times\R_{++}^{r \times n}$, $\Psi(\W,\H) = f(\W,\H) + g(\W,\H) \leq \hat{f}_k(\W, \H) + g(\W,\H)$.

Although $\nabla\hat{f}_k$ is not globally Lipschitz continuous, MMBPG does not require the Lipschitz continuity of $\nabla\hat{f}_k$ to guarantee its global convergence.
Instead, MMBPG requires the smooth adaptable property for $(\hat{f}_k, \phi)$.
Now we are ready to describe MMBPG for solving~\eqref{prob:klnmf} as Algorithm~\ref{alg:mmbpg-kl-nmf}.

\begin{algorithm}[!t]
    \caption{Majorization-minimization Bregman proximal gradient algorithm for KL-NMF}
    \label{alg:mmbpg-kl-nmf}
    \Input{$\phi\in\mathcal{G}(\R_{++}^{m\times r}\times\R_{++}^{r \times n})$ such that the smooth adaptable property for the pair $(\hat{f}_k,\phi)$ holds on $\R_{++}^{m\times r}\times\R_{++}^{r \times n}$.
    An initial point $(\W^0, \H^0)\in\R_{++}^{m\times r}\times\R_{++}^{r \times n}$.
    }
    \For{$k = 0, 1, 2, \ldots,$}{
        Compute
        \begin{align*}
            \alpha_{ilj}^k = \frac{W_{il}^kH_{lj}^k}{\sum_{l=1}^r W_{il}^kH_{lj}^k}, \quad \forall i\in[m],l\in[r],j\in[n].
        \end{align*}
        Take $\lambda_k > 0$ and compute
        \begin{align}
            \Z^{k+1}
            = \argmin_{\Z\in\R_+^{m\times r}\times\R_+^{r \times n}}&
            \left\{\langle\nabla \hat{f}_k(\Z^k), \Z - \Z^k\rangle + g(\Z) + \frac{1}{\lambda_k} D_\phi(\Z,\Z^k)\right\},\label{subprob:mmbpg-kl-nmf}
        \end{align}
        where $\Z = (\W, \H)$ and $\Z^k = (\W^k, \H^k)$.
    }
\end{algorithm}

\subsection{Majorization-Minimization Bregman Proximal Gradient Algorithm with Extrapolation for KL-NMF}
To guarantee $f(\W^k,\H^k) = \hat{f}_k(\W^k, \H^k)$ for MMBPGe, we also use~\eqref{def:auxiliary-para} without $\beta_k$.
Hien~\etal~\cite{Hien2025-vc} used $\Y^k = \Z^k + \beta_k[\Z^k - \Z^{k-1}]_+$, where $[\cdot]_+ := \max(\cdot, 0)$ and we use the same notation when applying $[\cdot]_+$ elementwise.
Although this extrapolation works well for MU, it does not work well for MMBPGe in practice.
Therefore, we adopt $\Y^k = \Z^k + \beta_k(\Z^k - \Z^{k-1})$ and reset $\beta_k$ when $\Y^k\notin\R_{++}^{m\times r}\times\R_{++}^{r \times n}$.
Now we are ready to describe MMBPGe for solving~\eqref{prob:klnmf} as Algorithm~\ref{alg:mmbpge-kl-nmf}.

\begin{algorithm}[!t]
    \caption{Majorization-minimization Bregman proximal gradient algorithm with extrapolation for KL-NMF}
    \label{alg:mmbpge-kl-nmf}
    \Input{$\phi\in\mathcal{G}(\R_{++}^{m\times r}\times\R_{++}^{r \times n})$ such that the smooth adaptable property for the pair $(\hat{f}_k,\phi)$ holds on $\R_{++}^{m\times r}\times\R_{++}^{r \times n}$.
    An initial point $(\W^0, \H^0) = (\W^{-1}, \H^{-1})\in\R_{++}^{m\times r}\times\R_{++}^{r \times n}$.
    $\theta_0 = \theta_{-1} = 1, \rho\in(0,1]$
    }
    \For{$k = 0, 1, 2, \ldots,$}{
        Compute
        \begin{align*}
            \alpha_{ilj}^k = \frac{W_{il}^kH_{lj}^k}{\sum_{l=1}^r W_{il}^kH_{lj}^k}, \quad \forall i\in[m],l\in[r],j\in[n].
        \end{align*}
        Take $\lambda_k >0$ and compute
        \begin{align*}
            \beta_k &= \frac{\theta_{k-1} - 1}{\theta_k} \quad \text{with} \quad \theta_k = \frac{1 + \sqrt{1 + 4\theta_{k-1}^2}}{2},\\
            \Y^k &= \Z^k + \beta_k(\Z^k - \Z^{k-1}),
        \end{align*}
        where $\Z^k = (\W^k, \H^k)$.
        
        \If{$\Y^k\notin\R_{++}^{m\times r}\times\R_{++}^{r \times n}$ or $D_\phi(\Z^k, \Y^k) > \rho D_\phi(\Z^{k-1}, \Z^k)$}{
            Set $\Y^k = \Z^k$ and $\theta_k = \theta_{k-1} = 1$.
        }
        Compute
        \begin{align}
            \Z^{k+1}
            = \argmin_{\Z\in\R_+^{m\times r}\times\R_+^{r \times n}}&
            \left\{\langle\nabla \hat{f}_k(\Y^k), \Z - \Y^k\rangle + g(\Z) + \frac{1}{\lambda_k} D_\phi(\Z,\Y^k)\right\},\label{subprob:mmbpge-kl-nmf}
        \end{align}
        where $\Z = (\W, \H)$.
    }
\end{algorithm}
\subsection{Smooth Adaptable Property for KL-NMF}
To apply MMBPG and MMBPGe to KL-NMF~\eqref{prob:klnmf}, we need appropriate $\phi$ and $L$.
The pair $(\hat{f}_k, \phi)$ should be $L$-smad.
In addition, a separable $\phi$ is preferable to solve subproblems~\eqref{subprob:mmbpg} and~\eqref{subprob:mmbpge} in closed forms.
In Theorem~\ref{theorem:l-smad}, we have the smooth adaptable property for $(\hat{f}_k, \phi)$ with a separable $\phi$.
In Propositions~\ref{prop:closed-form-nmf-l1} and~\ref{prop:closed-form-nmf-fro}, by using a separable $\phi$, we obtain closed-form solutions of subproblems~\eqref{subprob:mmbpg} and~\eqref{subprob:mmbpge}.
\begin{theorem}
\label{theorem:l-smad}
Let
\begin{align}
    \phi(\W, \H) = \sum_{i=1}^m\sum_{l=1}^r\left(-\log W_{il}+\frac{1}{2}W_{il}^2\right) + \sum_{l=1}^r\sum_{j=1}^n\left(-\log H_{lj}+\frac{1}{2}H_{lj}^2\right). \label{eq:kernel-nmf}
\end{align}
Then, for any $L_k > 0$ satisfying
\begin{align}
    L_k \geq \max\left\{\max_{i\in[m],l\in[r]}\left\{\sum_{j=1}^n\alpha^k_{ilj}X_{ij}\right\},\max_{l\in[r],j\in[n]}\left\{\sum_{i=1}^m\alpha^k_{ilj}X_{ij}\right\}, m, n\right\},\label{ineq:l-smad-nmf}
\end{align}
the pair $(\hat{f}_k,\phi)$ is $L_k$-smad on $\R_{++}^{m\times r}\times\R_{++}^{r \times n}$ for all $k \geq 0$. Moreover, the pair $(\hat{f}, \phi)$ is $L$-smad for any $L \geq \max_{i, j}\left\{\sum_{j}X_{ij}, \sum_{i}X_{ij}, m, n\right\}$ on $\R_{++}^{m\times r}\times\R_{++}^{r \times n}$.
\end{theorem}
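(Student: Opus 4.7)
\emph{Proof plan.} The plan is to verify both $L\phi-\hat{f}_k$ and $L\phi+\hat{f}_k$ are convex on $\R_{++}^{m\times r}\times\R_{++}^{r\times n}$ by computing their Hessians and applying a Gershgorin (diagonal dominance) argument. First I would peel off the constants and rewrite
$\hat{f}_k(\W,\H) = -\sum_{i,l,j}X_{ij}\alpha_{ilj}^k\log W_{il}-\sum_{i,l,j}X_{ij}\alpha_{ilj}^k\log H_{lj}+\sum_{i,l,j}W_{il}H_{lj}+\text{const}$.
This is the key benefit of majorization: the log terms are now \emph{separable} in the individual entries of $\W$ and $\H$, so the only coupling between the two blocks comes through the bilinear $\sum_{i,l,j}W_{il}H_{lj}$ term.

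Next I would read off the Hessian entries. The $\W\text{-}\W$ and $\H\text{-}\H$ blocks of $\nabla^2\hat{f}_k$ are diagonal, with entries $\frac{1}{W_{il}^2}\sum_j X_{ij}\alpha_{ilj}^k$ and $\frac{1}{H_{lj}^2}\sum_i X_{ij}\alpha_{ilj}^k$ respectively, while the off-diagonal $\W\text{-}\H$ block is $\partial^2\hat{f}_k/(\partial W_{il}\partial H_{l'j'})=\delta_{ll'}$ (independent of $i$ and $j'$), since only the bilinear term contributes. The Hessian of $\phi$ is purely diagonal with entries $1+1/W_{il}^2$ on the $\W$-part and $1+1/H_{lj}^2$ on the $\H$-part.

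Then I would invoke the fact that a real symmetric matrix with nonnegative diagonal that is row-wise diagonally dominant is positive semidefinite. For the row of $L\nabla^2\phi\pm\nabla^2\hat{f}_k$ indexed by $W_{il}$, the absolute off-diagonal sum equals $n$ (one unit entry for each of the $n$ columns $H_{l,j}$, $j\in[n]$), while the diagonal is $L\bigl(1+1/W_{il}^2\bigr)\pm\frac{1}{W_{il}^2}\sum_j X_{ij}\alpha_{ilj}^k$. A short calculation shows the diagonal dominates $n$ whenever $L\geq n$ and $L\geq\sum_j X_{ij}\alpha_{ilj}^k$. The analogous bound for the $H_{lj}$-rows gives $L\geq m$ and $L\geq\sum_i X_{ij}\alpha_{ilj}^k$. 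Taking the maximum recovers~\eqref{ineq:l-smad-nmf}, and the ``moreover'' follows immediately from $\alpha_{ilj}^k\in[0,1]$, which implies $\sum_j X_{ij}\alpha_{ilj}^k\leq\sum_j X_{ij}$ and $\sum_i X_{ij}\alpha_{ilj}^k\leq\sum_i X_{ij}$, making the bound independent of $k$ and of the particular choice of $\alpha$.

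The main obstacle is really just the bookkeeping in the ``$-$'' direction, i.e.\ showing $L\nabla^2\phi-\nabla^2\hat{f}_k\succeq 0$; the ``$+$'' direction is essentially automatic since every relevant contribution is nonnegative. The subtle point that motivates the definition~\eqref{eq:kernel-nmf} is that the diagonal of $L\nabla^2\phi$ must dominate the constant $n$ (resp.\ $m$) arising from the bilinear couplings, \emph{uniformly} over the whole positive orthant. The Burg entropy part $-\log W_{il}$ alone contributes only $L/W_{il}^2$, which vanishes as $W_{il}\to\infty$; adding the quadratic $\tfrac{1}{2}W_{il}^2$ supplies an extra constant $L$ on the diagonal that precisely absorbs the cross-coupling, and it is this combination that makes the diagonal dominance work globally.
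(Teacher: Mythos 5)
Your proposal is correct, and it reaches the conclusion by a genuinely different route for the key step. You and the paper start from the same decomposition of $\hat{f}_k$ into entrywise-separable logarithmic terms plus the bilinear coupling $\sum_{i,j,l}W_{il}H_{lj}$, and you both handle the logarithms by the same arithmetic (pairing $L(-\log W_{il})$ against $\bigl(\sum_j\alpha^k_{ilj}X_{ij}\bigr)(-\log W_{il})$, which is where the first two terms in~\eqref{ineq:l-smad-nmf} come from). The divergence is in how the bilinear term is absorbed: the paper uses the first-order algebraic identity $\frac{n}{2}\sum_{i,l}W_{il}^2+\frac{m}{2}\sum_{l,j}H_{lj}^2-\sum_{i,j}(\W\H)_{ij}=\frac{1}{2}\sum_{i,j,l}(W_{il}-H_{lj})^2$, exhibiting $L\phi\mp\hat{f}_k$ directly as a sum of manifestly convex functions, whereas you compute the Hessian and invoke Gershgorin-type diagonal dominance, with the constant off-diagonal block $\delta_{ll'}$ contributing row sums $n$ and $m$ that are dominated by the extra $L$ on the diagonal coming from the quadratic part of $\phi$. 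Both arguments are complete and yield exactly the same constants; the paper's completion-of-squares is slicker but requires spotting the identity, while your second-order argument is more mechanical and would adapt more readily if the coupling term or the kernel were changed. Your closing remark about why the Burg entropy alone would fail (its curvature $L/W_{il}^2$ decays at infinity, so the quadratic term is needed to supply a uniform constant on the diagonal) is a correct and worthwhile observation that the paper leaves implicit. The ``moreover'' part is handled identically in both proofs, via $\alpha^k_{ilj}\in[0,1]$.
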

\begin{proof}
Note that $\alpha_{ijl}^k$ is a fixed parameter at the $k$th iteration.
We have
\begin{align*}
    \hat{f}_k(\W,\H) = \sum_{i=1}^m\sum_{j=1}^n\left(X_{ij}\sum_{l=1}^r \alpha^k_{ilj}\left(-\log W_{il} - \log H_{lj}\right) + (\W\H)_{ij}\right) + C_k,
\end{align*}
where $C_k = \sum_{i,j}(X_{ij}\log X_{ij} - X_{ij} + X_{ij}\sum_{l} \alpha^k_{ilj}\log\alpha^k_{ilj})$ is a constant term at the $k$th iteration.
Since the auxiliary function $\hat{f}_k$ is separable, we can consider the smooth adaptable property separately.
We see that
\begin{align*}
    \left(L_k - \sum_{j=1}^n\alpha^k_{ilj}X_{ij}\right)(-\log W_{il})\quad\text{and}\quad \left(L_k - \sum_{i=1}^m\alpha^k_{ilj}X_{ij}\right)(-\log H_{lj})
\end{align*}
are convex for all $i\in[m],l\in[r]$, and $j\in[n]$.
In addition,
\begin{align*}
    \frac{n}{2}\sum_{i=1}^m\sum_{l=1}^rW_{il}^2 + \frac{m}{2}\sum_{l=1}^r\sum_{j=1}^nH_{lj}^2 - \sum_{i=1}^m\sum_{j=1}^n(\W\H)_{ij} = \frac{1}{2} \sum_{i=1}^m\sum_{j=1}^n\sum_{l=1}^r\left(W_{il} - H_{lj}\right)^2
\end{align*}
is convex because the composition of a convex function and an affine transformation is convex~\cite[Theorem 2.17]{Beck2017-qc}.
Using the above, we find that $L_k\phi - \hat{f}_k$ is convex on $\R_{++}^{m\times r}\times\R_{++}^{r \times n}$.
On the other hand, $L_k\phi + \hat{f}_k$ is also convex by a similar discussion.
Therefore, $(\hat{f}_k,\phi)$ is $L_k$-smad on $\R_{++}^{m\times r}\times\R_{++}^{r \times n}$.
The pair $(\hat{f}, \phi)$ is $L$-smad for any $L \geq \max_{i, j}\left\{\sum_{j}X_{ij}, \sum_{i}X_{ij}, m, n\right\}$ because the right-hand side of~\eqref{ineq:l-smad-nmf} is bounded above by $L$ due to $\alpha_{ilj}^k \in[0,1]$.
\qed
\end{proof}

We use $\phi$ so that it is easy to verify the smooth adaptable property and to provide a closed-form solution of subproblems. An easy way to find $\phi$ is to use elementary functions similar to $f$ or $\hat{f}_k$. In this case, the logarithm is suitable.
In practice,~\eqref{ineq:l-smad-nmf} is not tight.
We have $\hat{f}_k(\W, \H) = \hat{f}_{k,1}(\W) + \hat{f}_{k,2}(\H) + \hat{f}_{k,3}(\W, \H) + C_k$, where $\hat{f}_{k,1}(\W) = -\sum_{i, j, l}X_{ij} \alpha^k_{ilj}\log W_{il}$, $\hat{f}_{k,2}(\H) = -\sum_{i, j, l} X_{ij}\alpha^k_{ilj}\log H_{il}$, and $\hat{f}_{k,3}(\W,\H) = \sum_{i, j}(\W\H)_{ij}$.
Let $\phi_1(\W) = -\sum_{i,l}\log W_{il} + \frac{1}{2}\|\W\|_F^2$ and $\phi_2(\H) = -\sum_{l,j}\log H_{lj} + \frac{1}{2}\|\H\|_F^2$.
In the same way as the proof of Theorem~\ref{theorem:l-smad}, for any $L_{k,i} > 0$, $i = 1,2$ satisfying $L_{k,1} \geq \max_{i,l}\left\{\sum_{j=1}^n\alpha^k_{ilj}X_{ij}, m, n\right\}$ and $L_{k,2} \geq\max_{l,j}\left\{\sum_{i=1}^m\alpha^k_{ilj}X_{ij}, m, n\right\}$, $L_{k,1} \phi_1 - \hat{f}_{k,1}$ and $L_{k,2} \phi_2 - \hat{f}_{k,2}$ are convex.
Obviously, $\min\{L_{k,1}, L_{k,2}\}(\phi_1(\W) + \phi_2(\H)) - \hat{f}_{k,3}$ is convex because of $\min\{L_{k,1}, L_{k,2}\} \geq \max\{m, n\}$.
Therefore, when $g = g_1 + g_2$ for $g_1:\R^{m\times r}\to(-\infty,+\infty]$ and $g_2:\R^{r\times n}\to(-\infty,+\infty]$, instead of~\eqref{subprob:mmbpg-kl-nmf},  we can use
\begin{align}
    \W^{k+1}
    &= \argmin_{\W\in\R_+^{m\times r}}
    \left\{\langle\nabla_{\W} \hat{f}_k(\Z^k), \W - \W^k\rangle + g_1(\W) + \frac{1}{\lambda_{k,1}} D_{\phi_1}(\W,\W^k)\right\},\label{subprob:mmbpg-kl-nmf-h}\\
    \H^{k+1}
    &= \argmin_{\H\in\R_+^{r \times n}}
    \left\{\langle\nabla_{\H} \hat{f}_k(\Z^k), \H - \H^k\rangle + g_2(\H) + \frac{1}{\lambda_{k,2}} D_{\phi_2}(\H,\H^k)\right\},\label{subprob:mmbpg-kl-nmf-w}
\end{align}
where $0 < \lambda_{k,i} L_{k,i} < 1$, $i = 1,2$ hold and $\nabla_{\W} \hat{f}_k$ and $\nabla_{\H} \hat{f}_k$ are gradients with respect to $\W$ and $\H$, respectively.

When $g$ and $\phi$ are separable, it would be easy to solve~\eqref{subprob:mmbpg} in closed form.
Even if the subproblem is not solved in closed form, it is equivalent to independent $(n + m)r$ one-dimensional convex optimization problems.
We provide closed-form solutions of subproblem~\eqref{subprob:mmbpg-kl-nmf} with $\ell_1$ regularization or squared Frobenius norm regularization.
Propositions~\ref{prop:closed-form-nmf-l1} and~\ref{prop:closed-form-nmf-fro} are also true for subproblem~\eqref{subprob:mmbpge-kl-nmf} by replacing $(\W^k, \H^k)$ with $\Y^k$.
\begin{proposition}[$\ell_1$ regularization]
    \label{prop:closed-form-nmf-l1}
    Let $g(\W,\H) = \mu_{\W}\sum_{i,l}|W_{il}| + \mu_{\H}\sum_{l,j}|H_{lj}|$ with $\mu_{\W} > 0$ and $\mu_{\H} > 0$ and $\phi$ be defined by~\eqref{eq:kernel-nmf}.
    Then, $\Z^{k+1} = (\W^{k+1}, \H^{k+1})$ in~\eqref{subprob:mmbpg} is given by
    \begin{align*}
        W_{il}^{k+1} &= \frac{-P_{il} - \mu_{\W}\lambda_k + \sqrt{\left(P_{il}+\mu_{\W}\lambda_k\right)^2 + 4}}{2} > 0,\quad\forall i\in[m],l\in[r],\\
        H_{lj}^{k+1} &= \frac{-Q_{lj} - \mu_{\H}\lambda_k + \sqrt{\left(Q_{lj}+\mu_{\H}\lambda_k\right)^2 + 4}}{2} > 0,\quad\forall l\in[r],j\in[n],
    \end{align*}
    where
    \begin{align*}
        \P &= \lambda_k\nabla_{\W}\hat{f}_k(\W^k, \H^k) - \nabla_{\W}\phi(\W^k, \H^k),\\
        \Q &= \lambda_k\nabla_{\H}\hat{f}_k(\W^k, \H^k) - \nabla_{\H}\phi(\W^k, \H^k).
    \end{align*}
\end{proposition}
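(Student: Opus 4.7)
The plan is to exploit separability. Because $\phi$ in~\eqref{eq:kernel-nmf} is a sum of single-entry terms, $g$ is also a sum of single-entry terms, and $\langle\nabla \hat{f}_k(\Z^k),\Z-\Z^k\rangle$ is linear (hence separable), the subproblem~\eqref{subprob:mmbpg} decouples into $(m+n)r$ independent one-dimensional convex problems, one per entry of $\W$ and per entry of $\H$. It therefore suffices to solve one scalar subproblem for $W_{il}$ (the case of $H_{lj}$ being identical).

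For the $(i,l)$ entry of $\W$, writing $p_{il}:=[\nabla_{\W}\hat{f}_k(\W^k,\H^k)]_{il}$ and discarding constants independent of $W_{il}$, the scalar subproblem is
\begin{align*}
\min_{W_{il}\geq 0}\;\;p_{il}W_{il}+\mu_{\W}|W_{il}|+\frac{1}{\lambda_k}\Bigl(-\log W_{il}+\tfrac12 W_{il}^2 -\bigl(-\tfrac{1}{W^k_{il}}+W^k_{il}\bigr)W_{il}\Bigr).
\end{align*}
I would first observe that the $-\log W_{il}$ term acts as a barrier, so any feasible point has $W_{il}>0$, which automatically satisfies the nonnegativity constraint and makes $|W_{il}|=W_{il}$. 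The objective is then strictly convex and coercive on $(0,\infty)$, so a unique minimizer exists and is characterized by setting the derivative to zero:
\begin{align*}
p_{il}+\mu_{\W}+\frac{1}{\lambda_k}\Bigl(-\frac{1}{W_{il}}+W_{il}-\bigl(-\tfrac{1}{W^k_{il}}+W^k_{il}\bigr)\Bigr)=0.
\end{align*}

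Next I would multiply through by $\lambda_k W_{il}>0$ and collect terms. Using the shorthand $P_{il}=\lambda_k p_{il}-[\nabla_{\W}\phi(\W^k,\H^k)]_{il}=\lambda_k p_{il}-\bigl(-\tfrac{1}{W^k_{il}}+W^k_{il}\bigr)$ as in the statement, this reduces to the quadratic
\begin{align*}
W_{il}^2+(P_{il}+\mu_{\W}\lambda_k)W_{il}-1=0.
\end{align*}
The quadratic formula gives two roots whose product equals $-1$, hence exactly one is positive. That positive root is
\begin{align*}
W_{il}^{k+1}=\frac{-(P_{il}+\mu_{\W}\lambda_k)+\sqrt{(P_{il}+\mu_{\W}\lambda_k)^2+4}}{2}>0,
\end{align*}
which matches the claimed formula; the same computation, with $Q_{lj}$ and $\mu_{\H}$, handles $H_{lj}$.

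I do not expect any genuine obstacle: the argument is a textbook derivation once separability is invoked. The only point requiring mild care is justifying that the nonnegativity constraint $W_{il}\geq 0$ may be dropped (so that the standard first-order condition applies as an equality), which follows from the implicit barrier provided by $-\log W_{il}$ in $\phi$; together with strict convexity this also certifies that the positive root of the quadratic is indeed the global minimizer rather than merely a stationary point.
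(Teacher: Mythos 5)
Your proof is correct and follows essentially the same route as the paper's: reduce to per-entry scalar problems, write the stationarity condition $P_{il}+\mu_{\W}\lambda_k - 1/W + W = 0$, and select the unique positive root of the resulting quadratic. The only difference is cosmetic and in your favor — you justify strict positivity of the minimizer up front via the $-\log$ barrier in $\phi$ (so the constraint and the absolute value can be dropped cleanly), whereas the paper assumes $W_{il}^{k+1}>0$ and verifies it a posteriori.
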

\begin{proof}
    Using $\P$ and $\Q$, we find that $(\W^{k+1}, \H^{k+1})$ is the optimal solution of the following strongly convex optimization problem:
    \begin{align*}
        \min_{(\W,\H)\in\R_+^{m\times r}\times\R_+^{r \times n}}\quad
        \left\langle\P, \W\right\rangle + \left\langle\Q, \H\right\rangle + \lambda_kg(\W, \H) + \phi(\W,\H).
    \end{align*}
    Assume that $W_{il}^{k+1} > 0$ and $H_{lj}^{k+1} > 0$.
    By the first-order optimality condition, we obtain
    \begin{align*}
        P_{il} + \mu_{\W}\lambda_k - \frac{1}{W_{il}^{k+1}} + W_{il}^{k+1} = 0.
    \end{align*}
    We solve the above equation and have
    \begin{align*}
        W_{il}^{k+1} = \frac{-P_{il} - \mu_{\W}\lambda_k + \sqrt{\left(P_{il}+\mu_{\W}\lambda_k\right)^2 + 4}}{2} > 0.
    \end{align*}
    We see $W_{il}^{k+1} = 0$ only when $P_{il} \to +\infty$.
    Note that if we allow $P_{il}$ to include infinity, $W_{il}^{k+1}$ is nonnegative.
    We also obtain $\H^{k+1}$ in the same way.
    \qed
\end{proof}
\begin{proposition}[Squared Frobenius norm regularization]
    \label{prop:closed-form-nmf-fro}
    Let $g(\W,\H) = \frac{\mu_{\W}}{2}\|\W\|_F^2 + \frac{\mu_{\H}}{2}\|\H\|_F^2$ with $\mu_{\W} > 0$ and $\mu_{\H} > 0$ and $\phi$ be defined by~\eqref{eq:kernel-nmf}.
    Then, $\Z^{k+1} = (\W^{k+1}, \H^{k+1})$ in~\eqref{subprob:mmbpg} is given by
    \begin{align*}
        W_{il}^{k+1} &= \frac{-P_{il} + \sqrt{P_{il}^2 + 4(1 + \mu_{\W}\lambda_k)}}{2(1 + \mu_{\W}\lambda_k)} > 0,\quad\forall i\in[m],l\in[r],\\
        H_{lj}^{k+1} &= \frac{-Q_{lj} + \sqrt{Q_{lj}^2 + 4(1 + \mu_{\H}\lambda_k)}}{2(1 + \mu_{\H}\lambda_k)} > 0,\quad\forall l\in[r],j\in[n],
    \end{align*}
    where
    \begin{align*}
        \P &= \lambda_k\nabla_{\W}\hat{f}_k(\W^k, \H^k) - \nabla_{\W}\phi(\W^k, \H^k),\\
        \Q &= \lambda_k\nabla_{\H}\hat{f}_k(\W^k, \H^k) - \nabla_{\H}\phi(\W^k, \H^k).
    \end{align*}
\end{proposition}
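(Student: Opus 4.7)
The plan is to follow exactly the pattern used in Proposition~\ref{prop:closed-form-nmf-l1}, the only substantive change being that the $\ell_1$ term is replaced by a quadratic term, which simplifies the analysis (no subdifferential arguments needed). First I would rewrite the subproblem~\eqref{subprob:mmbpg-kl-nmf}: after discarding constants and using the definitions of $\P$ and $\Q$, the problem reduces to
\begin{align*}
    \min_{(\W,\H)\in\R_+^{m\times r}\times\R_+^{r \times n}}\quad
    \langle\P,\W\rangle + \langle\Q,\H\rangle + \lambda_k g(\W,\H) + \phi(\W,\H),
\end{align*}
which, thanks to the particular forms of $g$ and $\phi$ in~\eqref{eq:kernel-nmf}, is a sum of $(m+n)r$ independent strongly convex one-dimensional problems, so existence and uniqueness of a minimizer are automatic.

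Next I would focus on one coordinate $W_{il}$. Assuming $W_{il}^{k+1}>0$ (so that the gradient of $-\log W_{il}$ is well defined), the first-order optimality condition becomes
\begin{align*}
    P_{il} + \mu_{\W}\lambda_k W_{il}^{k+1} - \frac{1}{W_{il}^{k+1}} + W_{il}^{k+1} = 0,
\end{align*}
which, after multiplying by $W_{il}^{k+1}$, yields the quadratic
\begin{align*}
    (1+\mu_{\W}\lambda_k)(W_{il}^{k+1})^2 + P_{il}W_{il}^{k+1} - 1 = 0.
\end{align*}
I would then apply the quadratic formula and select the positive root, giving the announced expression; the negative root is discarded because the product of the roots equals $-1/(1+\mu_{\W}\lambda_k)<0$, so exactly one root is positive, and it is strictly positive because $\sqrt{P_{il}^2 + 4(1+\mu_{\W}\lambda_k)} > |P_{il}| \geq -P_{il}$. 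This simultaneously verifies the standing assumption $W_{il}^{k+1}>0$, hence the boundary case $W_{il}^{k+1}=0$ never occurs; strong convexity of the one-dimensional subproblem then certifies that this candidate is indeed the minimizer. The identical argument applied coordinatewise to $H_{lj}$ with $Q_{lj}$ and $\mu_{\H}$ yields the corresponding closed form.

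There is no real obstacle in this proof: unlike the $\ell_1$ case, the regularizer is smooth, so no soft-thresholding-style case analysis is needed, and the positive root is always well defined and strictly positive by inspection. The only step requiring slight care is confirming that the candidate found under the assumption $W_{il}^{k+1}>0$ is actually the global minimizer over $\R_+$; this follows from the strict convexity of the objective on $\R_{++}$, the interiority (not boundary) nature of the critical point, and the fact that the objective blows up as $W_{il}\downarrow 0$ via the $-\log W_{il}$ term, so the infimum over $\R_+$ is attained in $\R_{++}$ and is unique.
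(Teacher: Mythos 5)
Your proof is correct and follows essentially the same route as the paper's: reduce subproblem~\eqref{subprob:mmbpg-kl-nmf} to the separable strongly convex problem $\min\,\langle\P,\W\rangle+\langle\Q,\H\rangle+\lambda_k g+\phi$, write the interior first-order condition coordinatewise, and take the positive root of the resulting quadratic. Your justification of interiority via the $-\log$ barrier blowing up as $W_{il}\downarrow 0$ is in fact slightly more careful than the paper's remark that $W_{il}^{k+1}=0$ occurs only as $P_{il}\to+\infty$, but this is a refinement of the same argument, not a different approach.
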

\begin{proof}
    Using $\P$ and $\Q$, we find that $(\W^{k+1}, \H^{k+1})$ is the optimal solution of the following strongly convex optimization problem:
    \begin{align*}
        \min_{(\W,\H)\in\R_+^{m\times r}\times\R_+^{r \times n}}\quad
        \left\langle\P, \W\right\rangle + \left\langle\Q, \H\right\rangle + \lambda_kg(\W, \H) + \phi(\W,\H).
    \end{align*}
    Assume that $W_{il}^{k+1} > 0$ and $H_{lj}^{k+1} > 0$.
    By the first-order optimality condition, we obtain
    \begin{align*}
        P_{il} - \frac{1}{W_{il}^{k+1}} + (1 + \mu_{\W}\lambda_k)W_{il}^{k+1} = 0.
    \end{align*}
    We solve the above equation and have
    \begin{align*}
        W_{il}^{k+1} = \frac{-P_{il} + \sqrt{P_{il}^2 + 4(1 + \mu_{\W}\lambda_k)}}{2(1 + \mu_{\W}\lambda_k)} > 0.
    \end{align*}
    We see $W_{il}^{k+1} = 0$ only when $P_{il} \to +\infty$ in the same discussion as Proposition~\ref{prop:closed-form-nmf-l1}.
    \qed
\end{proof}

When $g\equiv0$, by setting $\mu_{\W}$ and $\mu_{\H}$ to 0, we have
\begin{align*}
    W_{il}^{k+1} = \frac{-P_{il} + \sqrt{P_{il}^2 + 4}}{2} > 0 \quad\text{and}\quad H_{lj}^{k+1} = \frac{-Q_{lj} + \sqrt{Q_{lj}^2 + 4}}{2} > 0,
\end{align*}
which are positive.

\begin{corollary}
    Let $g$ be defined by Proposition~{\rm\ref{prop:closed-form-nmf-l1}} or~{\rm\ref{prop:closed-form-nmf-fro}} and $\phi$ be defined by~\eqref{eq:kernel-nmf}.
    Let $\{\Z^k\}_k$ be a sequence generated by MMBPG with $0 < \lambda_k L_k < 1$ or MMBPGe with $\rho(1 +\lambda_k L_k) < \lambda_kM < 1$ for solving~\eqref{prob:klnmf}.
    Then, the sequence $\{\Z^k\}_k$ has finite length, \ie, $\sum_{k=1}^{\infty}\|\Z^{k+1} - \Z^k\|_F < \infty$, and converges to a KKT point of~\eqref{prob:klnmf}.
\end{corollary}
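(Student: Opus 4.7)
The strategy is to specialize Theorems~\ref{theorem:covergence-mmbpg} and~\ref{theorem:convergence-mmbpge} to the present setting; the corollary then follows once we (a) check every assumption for the specific $\phi$ in~\eqref{eq:kernel-nmf} together with either the $\ell_1$ or squared Frobenius $g$, (b) certify that $\Psi$ (respectively $\Phi_M$) is a Kurdyka--\L{}ojasiewicz function, and (c) translate the resulting stationary-point conclusion into the KKT conditions~\eqref{cond:kkt_w}--\eqref{cond:kkt_h}. First I would verify Assumption~\ref{assumption:setting}: parts (i)--(iii) are routine for the KL loss, the chosen $\phi$, and the convex $g$; part (iv) holds because both $f$ and $g$ are nonnegative on $\cl C$; and part (v) follows from the quadratic component $\tfrac{1}{2}(W_{il}^{2}+H_{lj}^{2})$ inside $\phi$, which already makes $\phi+\lambda g$ supercoercive regardless of $\lambda>0$. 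Assumption~\ref{assumption:bregman-mapping-well-defined} is immediate from Propositions~\ref{prop:closed-form-nmf-l1} and~\ref{prop:closed-form-nmf-fro}, whose closed-form iterates are strictly positive, so $\mathcal{T}_{\lambda}(\Z)\subset C$. Assumption~\ref{assumption:l-smad} is the content of Theorem~\ref{theorem:l-smad}.

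Next, for Assumption~\ref{assumption:global-convergence}, strong convexity of $\phi$ with $\sigma=1$ is supplied by the same quadratic term; $\nabla\hat{f}_{k}$ and $\nabla\phi$ are $\mathcal{C}^{1}$ on $C$ and therefore Lipschitz on every compact subset of $C$; and the level-boundedness of $\Psi$ follows from $f\geq 0$ together with the coercivity of either regularizer. Assumption~\ref{assumption:g-convex} is evident. For Assumption~\ref{assumption:second-order-subdiff} the fact that $\phi\in\mathcal{C}^{2}(C)$ gives $\partial(\nabla\phi(\Z))(\U)=\{\nabla^{2}\phi(\Z)\U\}$, with a diagonal Hessian whose entries $1/W_{il}^{2}+1$ and $1/H_{lj}^{2}+1$ are bounded on any compact subset of $C$. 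For the KL hypothesis, I would invoke definability in the o-minimal structure $\R_{\mathrm{an},\exp}$: $f$, $g$, and $D_{\phi}$ are all definable there because $x\mapsto x\log x$ and $x\mapsto\log x$ are, so the sums $\Psi$ and $\Phi_{M}$ are definable and hence KL functions.

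With every hypothesis of Theorems~\ref{theorem:covergence-mmbpg} and~\ref{theorem:convergence-mmbpge} in place, one obtains $\sum_{k}\|\Z^{k+1}-\Z^{k}\|_{F}<\infty$ and convergence to a $\tilde{\Z}$ satisfying $\zero\in\nabla f(\tilde{\Z})+\partial g(\tilde{\Z})$. Splitting this inclusion entrywise and inserting the subdifferential of $\mu_{\W}|\cdot|$ or $\tfrac{\mu_{\W}}{2}(\cdot)^{2}$ at $\tilde{W}_{il}$ (and likewise for $\tilde{H}_{lj}$) reproduces exactly the KKT conditions~\eqref{cond:kkt_w}--\eqref{cond:kkt_h}, as reviewed in Section~\ref{subsec:properties-kl-nmf}. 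The main obstacle I anticipate is the handling of accumulation points on the boundary $\partial C$, where $\nabla\phi$ is singular, so Assumption~\ref{assumption:global-convergence}\ref{assumption:lip-compact} cannot be invoked on a compact neighborhood of $\tilde{\Z}$ that crosses $\partial C$; this is precisely the delicate point inside the proofs of~\cite{Bolte2018-zt} and of Theorem~\ref{theorem:covergence-mmbpg}, and one must check that the Legendre-type behavior of our $\phi$, combined with the strict positivity of the iterates from Propositions~\ref{prop:closed-form-nmf-l1} and~\ref{prop:closed-form-nmf-fro}, is compatible with the limiting argument used there.
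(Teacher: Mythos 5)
Your proposal is correct and follows essentially the same route as the paper: verify Assumptions~\ref{assumption:setting}--\ref{assumption:second-order-subdiff} for the specific $\phi$ in~\eqref{eq:kernel-nmf} and the given $g$, certify the Kurdyka--\L{}ojasiewicz property, and invoke Theorems~\ref{theorem:covergence-mmbpg} and~\ref{theorem:convergence-mmbpge}. The only substantive difference is that you certify the KL property via definability in the o-minimal structure $\R_{\mathrm{an},\exp}$ whereas the paper argues through real-analyticity of the logarithmic, polynomial, and (restricted) absolute-value terms; both are standard and valid, and the boundary subtlety you flag at the end is present in, and deferred to, the cited convergence theorems in exactly the same way as in the paper's own proof.
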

\begin{proof}
We choose $\phi$ in~\eqref{eq:kernel-nmf}, \ie, $\interior\dom \phi = \R_{++}^{m\times r}\times\R_{++}^{r\times n}$ and $\cl{\dom\phi} = \R_+^{m\times r}\times\R_+^{r\times n}$.
This implies Assumption~\ref{assumption:setting}(i) holds.
Obviously, Assumption~\ref{assumption:setting}(ii) holds because $f$ and $\hat{f}_k$ are proper and lower semicontinuous functions, which are $\mathcal{C}^1$ on $\dom f = \dom \hat{f} = \R_{++}^{m\times r}\times\R_{++}^{r\times n}$.
In addition, $g$ in Propositions~\ref{prop:closed-form-nmf-l1} and~\ref{prop:closed-form-nmf-fro} satisfies Assumption~\ref{assumption:setting}(iii) because $g$ is a proper and lower semicontinuous function with $\dom g = \R^{m\times r}\times\R^{r\times n}$.
Because Proposition~\ref{prop:global-solution} implies $\inf\Psi(\W,\H) > -\infty$, Assumption~\ref{assumption:setting}(iv) holds.
The function $\phi + \lambda g$ is supercoercive for any $\lambda > 0$ due to the strong convexity of $\phi$ and~\cite[Corollary 11.17]{Bauschke2017-jp}, \ie,~Assumption~\ref{assumption:setting}(v) holds.

From Propositions~\ref{prop:closed-form-nmf-l1} and~\ref{prop:closed-form-nmf-fro}, we have $\mathcal{T}_{\lambda_k}(\Z^k)\subset\interior\dom\phi = \R_{++}^{m\times r}\times\R_{++}^{r\times n}$, which implies that Assumption~\ref{assumption:bregman-mapping-well-defined} holds.
Assumption~\ref{assumption:l-smad} holds because of Theorem~\ref{theorem:l-smad}.
Moreover, Assumption~\ref{assumption:global-convergence}(i) holds because $\phi$ is strongly convex and $\nabla\phi$ and $\nabla \hat{f}_k$ are Lipschitz continuous on any compact subset of $\R_{++}^{m\times r}\times\R_{++}^{r\times n}$.
The objective function $\Psi = f + g$ is coercive, \ie, level-bounded~\cite[Proposition 11.12]{Bauschke2017-jp} because $\Psi \geq g$ holds from $f \geq 0$ and $g$ is coercive, \ie, $\lim_{\|(\W, \H)\|\to+\infty}g(\W, \H) = +\infty$.
Thus, Assumption~\ref{assumption:global-convergence}(ii) holds.
Obviously, Assumption~\ref{assumption:g-convex} holds.
Because $\phi$ is $\mathcal{C}^2$ and $\Z \in S$, where $S$ is a compact subset of $\interior\dom\phi$, from the extreme value theorem, there exists $A > 0$ such that $\|\nabla^2\phi(\Z)\U\| \leq \|\nabla^2\phi(\Z)\|\|\U\| \leq A\|\U\|$ for all $\U \in \interior\dom\phi$, \ie, Assumption~\ref{assumption:second-order-subdiff} holds.

In addition, logarithmic and polynomial functions are real-analytic on their domain.
Although absolute functions are not real-analytic on their domain, $|x| = x$ is real-analytic on $x \geq 0$.
Therefore, the objective function and the potential function are Kurdyka--\L{}ojasiewicz functions because real-analytic functions satisfy the Kurdyka--\L{}ojasiewicz property on their domain~\cite{Bolte2007-xr,Lojasiewicz1963-kf}.
Therefore, $\{\Z^k\}_k$ converges to a KKT point from Theorems~\ref{theorem:covergence-mmbpg} and~\ref{theorem:convergence-mmbpge}.\qed
\end{proof}

\section{Numerical Experiments}\label{sec:numerical-experiments}
In this section, we conducted numerical experiments to examine the performance of MMBPG and MMBPGe for KL-NMF~\eqref{prob:klnmf}.
All numerical experiments were performed in Python 3.11 on a MacBook Pro with an Apple M2 Max 64GB LPDDR5 memory.

We compared MMBPG and MMBPGe with the multiplicative update method (MU)~\cite{Lee2000-nl}, the MU with extrapolation (MUe)~\cite{Hien2025-vc}, the cyclic coordinate descent algorithm (CCD)~\cite{Hsieh2011-ik}, and the alternating gradient descent method with line search (AGD).
MMBPG(e) used $\lambda_k = \frac{1}{L_k}$ satisfying~\eqref{ineq:l-smad-nmf}.
MMBPGe set $\rho = 0.999$.
The number of inner iterations of CCD is 100.
AGD minimized $f$ alternately and set the initial step-size as $(c L_0)^{-1}$ satisfying~\eqref{ineq:l-smad-nmf} with a constant $c > 1$.
The termination criteria is $\|(\W^{k+1}, \H^{k+1}) - (\W^k, \H^k)\|_F/\max\{1, \|(\W^{k+1}, \H^{k+1})\|_F\}\leq 10^{-9}$.

We define the relative error~\cite{Hien2021-sh} to be $f(\W, \H)$ divided by $\sum_{i,j}X_{ij}\log\frac{nX_{ij}}{\sum_{j}X_{ij}}$.
Let $\W_{\bullet,l}$ and $\H_{l, \bullet}$ be the $l$th column vector of $\W$ and the $l$th row vector of $\H$. The notation $(\W_{\bullet,l})_l$ denotes the matrix whose columns are the vectors $\W_{\bullet,l}$. Likewise, $(\H_{l,\bullet})_l$ is the matrix whose rows are the vectors $\H_{l, \bullet}$. When the context is clear, we omit the subscripts and write $\frac{\W_{\bullet,l}}{\|\W_{\bullet,l}\|}$ and $\frac{\H_{l, \bullet}}{\|\H_{l, \bullet}\|}$ in figures simply as $\left(\frac{\W_{\bullet,l}}{\|\W_{\bullet,l}\|}\right)_l$ and $\left(\frac{\H_{l, \bullet}}{\|\H_{l, \bullet}\|}\right)_l$.
We also use the norm of the normalized KKT conditions $\left\|\left(\frac{\W_{\bullet,l}}{\|\W_{\bullet,l}\|}\right)_l\odot\nabla_{\W} f(\W^k,\H^k)\right\|$ and $\left\|\left(\frac{\H_{l, \bullet}}{\|\H_{l, \bullet}\|}\right)_l\odot\nabla_{\H} f(\W^k,\H^k)\right\|$, where $\odot$ denotes Hadamard (elementwise) product. The norm of the normalized KKT conditions implies the performance of KKT conditions~\eqref{cond:kkt_w} and~\eqref{cond:kkt_h}, respectively.

\subsection{Synthetic Data}
We consider synthetic data.
We generate the ground truth $\W^*$ from i.i.d. uniform distribution and $\H^*$ from the Dirichlet distribution and then set $\X = \W^*\H^*$.
Figure~\ref{fig:nmf-heatmap} shows heat maps of $\H^*(\H^*)^{\mathsf{T}}$ for the ground truth $\H^*$.
We generate the initial point $(\W^0, \H^0)$ from i.i.d. uniform distribution and scaled it by $(\alpha\W^0, \alpha\H^0)$ for different $\alpha > 0$, where we use $\alpha = 1$ as the unscaled initial point or $\alpha = \sqrt{\frac{\sum_{i,j}X_{i,j}}{\sum_{i,j}(\W^0\H^0)_{ij}}}$ as the scaled initial point~\cite{Hien2021-sh}.
The maximum iteration is 3000 for algorithms except for CCD.

\begin{figure}[!tphb]
    \centering
    \begin{minipage}[b]{0.49\linewidth}
        \centering
        \includegraphics[width=\textwidth]{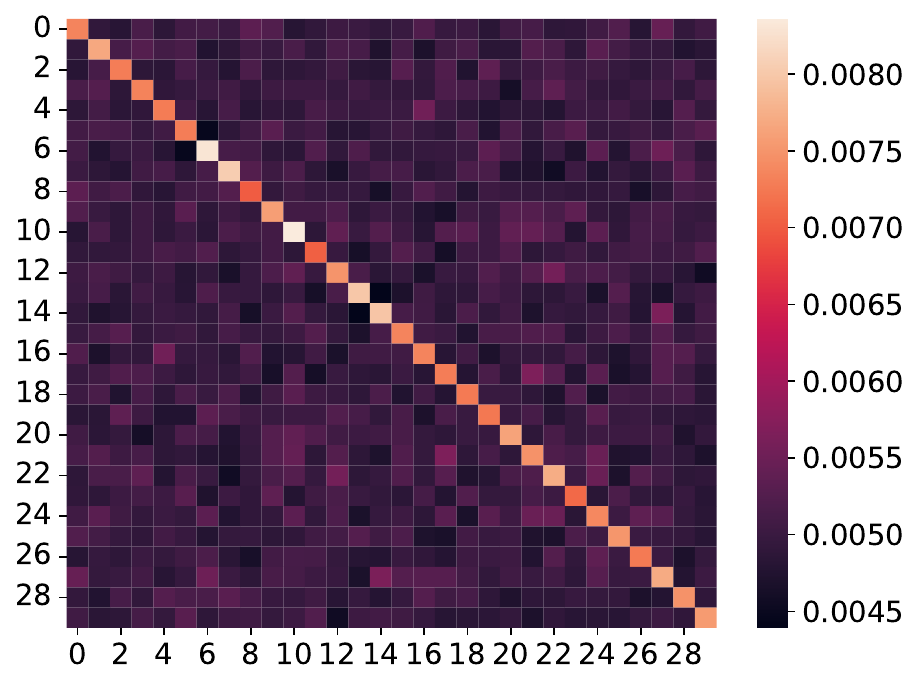}
        \subcaption{The heat map of $\H^*(\H^*)^{\mathsf{T}}$ for $\H^*\in\R^{30\times200}$.}
        \label{fig:nmf-200-200-30-heatmap}
    \end{minipage}
    \hfill
    \begin{minipage}[b]{0.49\linewidth}
        \centering
        \includegraphics[width=\textwidth]{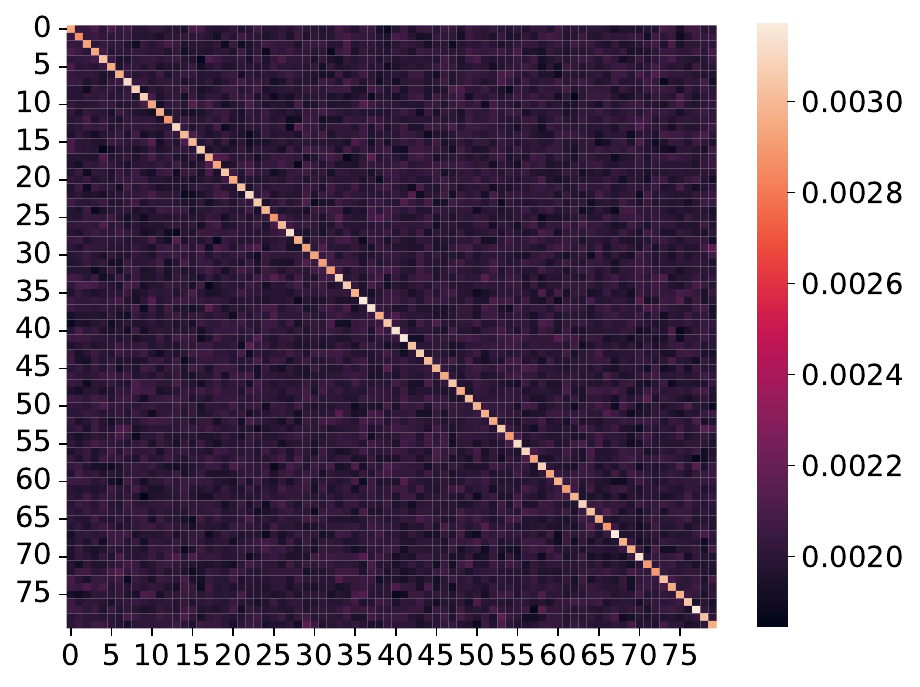}
        \subcaption{The heat map of $\H^*(\H^*)^{\mathsf{T}}$ for $\H^*\in\R^{80\times500}$.}
        \label{fig:nmf-500-500-80-heatmap}
    \end{minipage}
    \caption{The heat maps of $\H^*(\H^*)^{\mathsf{T}}$ for the ground truth $\H^*$.}
    \label{fig:nmf-heatmap}
\end{figure}

We first consider $g \equiv 0$.
The performance results are shown in Figures~\ref{fig:nmf-x0unscaled-200-200-30} and~\ref{fig:nmf-x0scaled-200-200-30} for $(m, n, r) = (200, 200, 30)$ and Figures~\ref{fig:nmf-x0unscaled-500-500-80} and~\ref{fig:nmf-x0scaled-500-500-80} for $(m, n, r) = (500, 500, 80)$ with different $\alpha$.
The maximum iterations of CCD are 600 for $(m, n, r) = (200, 200, 30)$ and 100 for $(m, n, r) = (500, 500, 80)$.
Figures~\ref{fig:nmf-x0unscaled-200-200-30-rel-iter} and~\ref{fig:nmf-x0unscaled-500-500-80-rel-iter} show that the relative error value of MMBPGe at the 3000th iteration is better than that of other algorithms, while Figures~\ref{fig:nmf-x0scaled-200-200-30-rel-iter} and~\ref{fig:nmf-x0scaled-500-500-80-rel-iter} show that that of MUe is better than that of other algorithms.
On the other hand, Figures~\ref{fig:nmf-x0scaled-200-200-30-kktw-iter},~\ref{fig:nmf-x0scaled-200-200-30-kkth-iter},~\ref{fig:nmf-x0scaled-500-500-80-kktw-iter}, and~\ref{fig:nmf-x0scaled-500-500-80-kkth-iter} show that MMBPGe recovered $\W$ and $\H$ with the minimum norm value of the normalized KKT condition.
For the computation time in 3000 iterations, MMBPGe is almost as fast as or faster than MUe.
Figure~\ref{fig:nmf-x0unscaled-200-200-30} shows that AGD terminated before the 3000th iteration because KL-NMF~\eqref{prob:klnmf} lacks Lipschitz continuity of $\nabla f$ and step-sizes would be very small due to many line search procedures for this experiment.
This implies that $\nabla f$ would not be Lipschitz continuous around the unscaled initial point.
MMBPGe performs well even if $\nabla f$ is not Lipschitz continuous.
Table~\ref{tab:resetting-g0} shows the number of times $\theta_k$ and $\beta_k$ were reset in MMBPGe of Figures~\ref{fig:nmf-x0unscaled-200-200-30},~\ref{fig:nmf-x0scaled-200-200-30},~\ref{fig:nmf-x0unscaled-500-500-80}, and~\ref{fig:nmf-x0scaled-500-500-80}. The restart frequency is low at approximately 0.2\% in Table~\ref{tab:resetting-g0}.

\begin{table}[!tpb]
\caption{The frequency of occurrence of $\Y^k \not\in\R_{++}^{m \times r} \times \R_{++}^{r \times n}$ and \eqref{ineq:adaptive-restart} for MMBPGe in Figures~\ref{fig:nmf-x0unscaled-200-200-30},~\ref{fig:nmf-x0scaled-200-200-30},~\ref{fig:nmf-x0unscaled-500-500-80}, and~\ref{fig:nmf-x0scaled-500-500-80}.}
\label{tab:resetting-g0}
\centering
\begin{tabular}{lllcrr}
\toprule
$m$ & $n$ & $r$ & scaled & nonpositive & \eqref{ineq:adaptive-restart}\\ \midrule
200 & 200 & 30 & \xmarkr & 0 & 7 \\
200 & 200 & 30 & \cmarkg & 0 & 7 \\\midrule
500 & 500 & 80 & \xmarkr & 1 & 6 \\
500 & 500 & 80 & \cmarkg & 0 & 5 \\
\bottomrule
\end{tabular}
\end{table}

\begin{figure}[!pth]
    \centering
    \begin{minipage}[b]{0.49\linewidth}
        \centering
        \includegraphics[width=\textwidth]{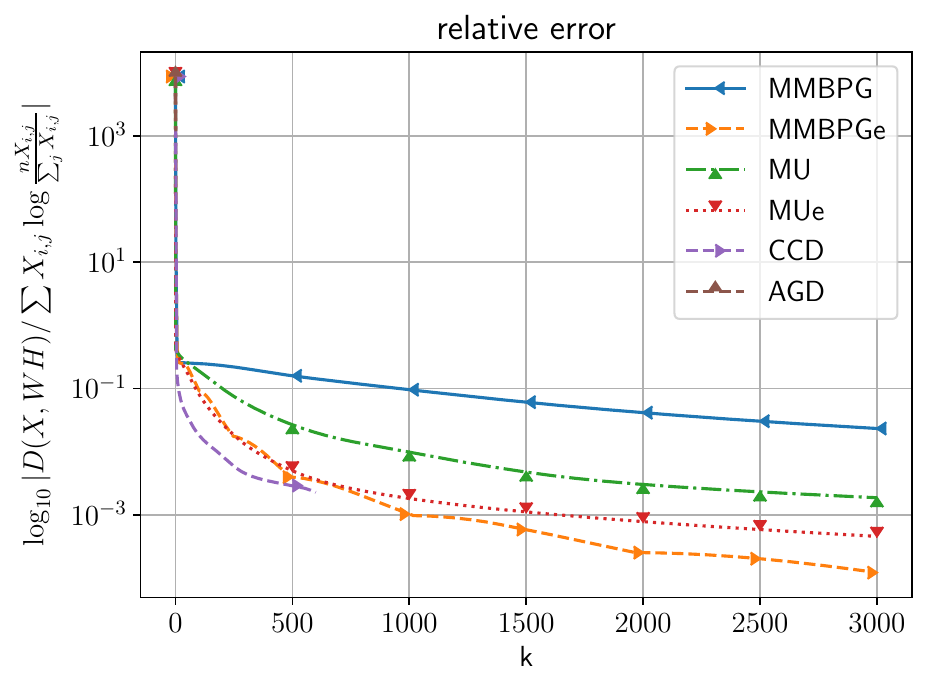}
        \subcaption{The relative error values with the iteration axis.}
        \label{fig:nmf-x0unscaled-200-200-30-rel-iter}
    \end{minipage}
    \hfill
    \begin{minipage}[b]{0.49\linewidth}
        \centering
        \includegraphics[width=\textwidth]{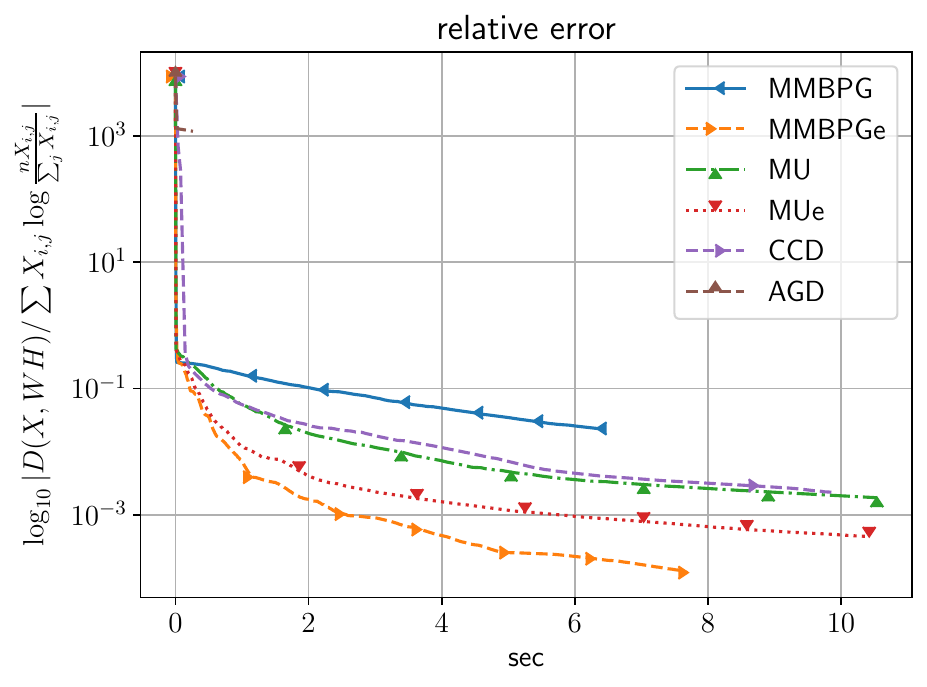}
        \subcaption{The relative error values with the time axis.}
        \label{fig:nmf-x0unscaled-200-200-30-rel-time}
    \end{minipage}
    \begin{minipage}[b]{0.49\linewidth}
        \centering
        \includegraphics[width=\textwidth]{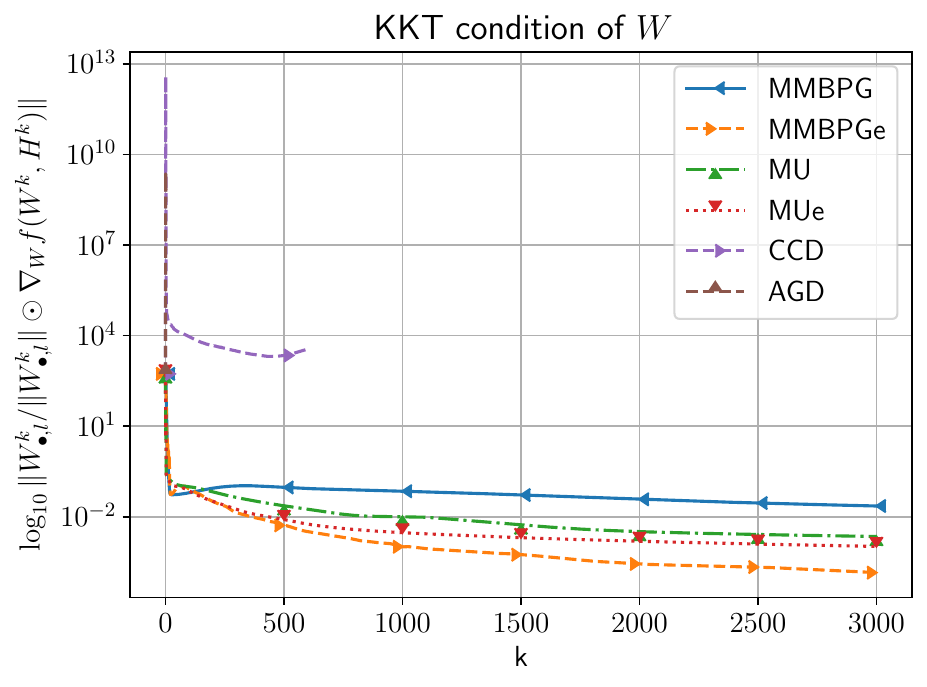}
        \subcaption{The KKT condition with respect to $\W$ with the iteration axis.}
        \label{fig:nmf-x0unscaled-200-200-30-kktw-iter}
    \end{minipage}
    \hfill
    \begin{minipage}[b]{0.49\linewidth}
        \centering
        \includegraphics[width=\textwidth]{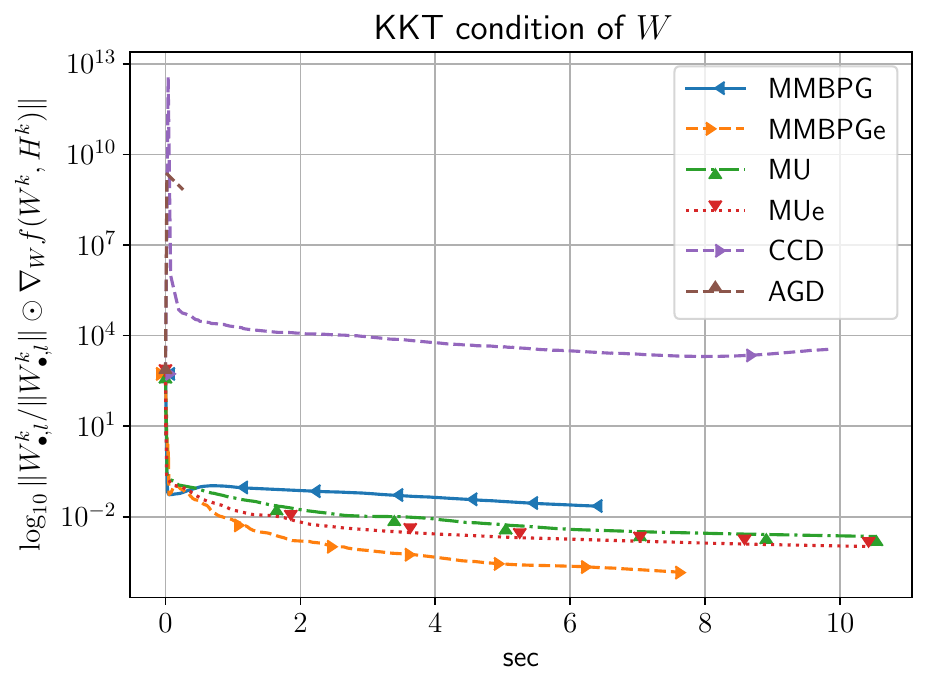}
        \subcaption{The KKT condition with respect to $\W$ with the time axis.}
        \label{fig:nmf-x0unscaled-200-200-30-kktw-time}
    \end{minipage}
    \begin{minipage}[b]{0.49\linewidth}
        \centering
        \includegraphics[width=\textwidth]{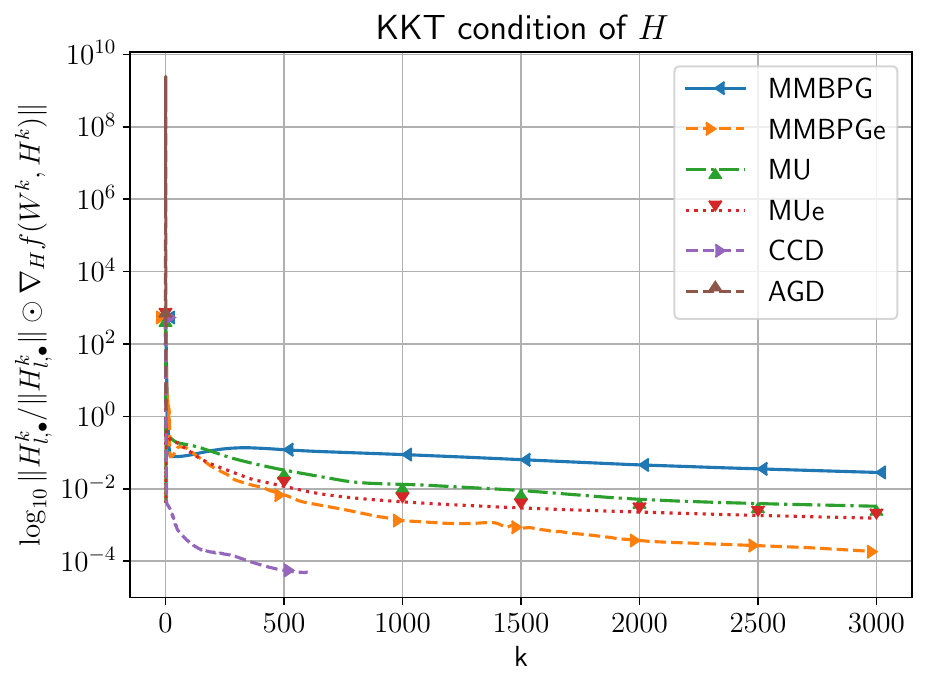}
        \subcaption{The KKT condition with respect to $\H$ with the iteration axis.}
        \label{fig:nmf-x0unscaled-200-200-30-kkth-iter}
    \end{minipage}
    \hfill
    \begin{minipage}[b]{0.49\linewidth}
        \centering
        \includegraphics[width=\textwidth]{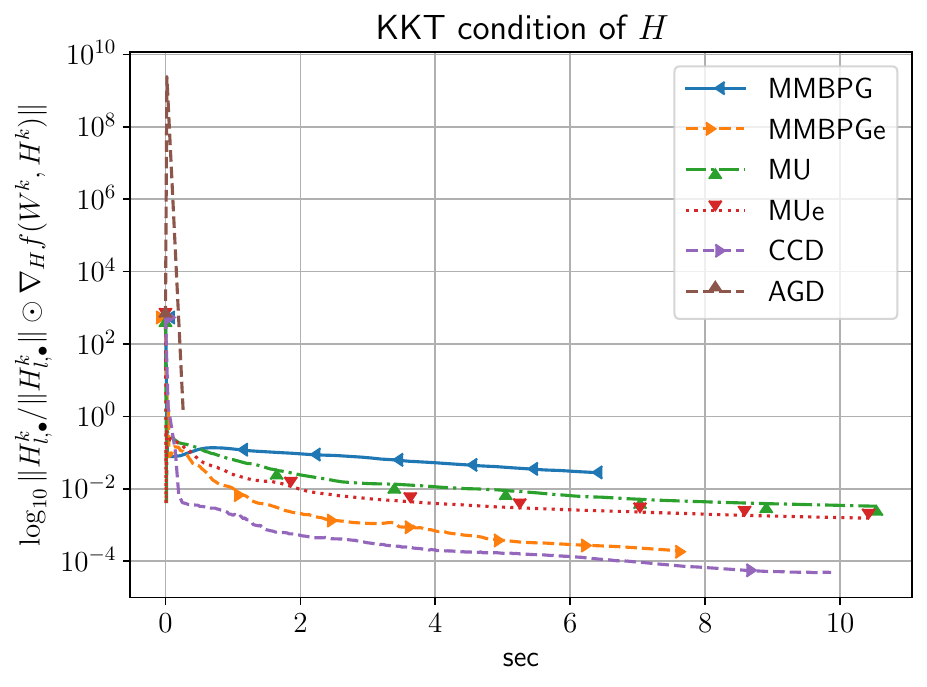}
        \subcaption{The KKT condition with respect to $\H$ with the time axis.}
        \label{fig:nmf-x0unscaled-200-200-30-kkth-time}
    \end{minipage}
    \caption{Comparison with MMBPG (blue), MMBPGe (orange), MU (green), MUe (red), CCD (purple), and AGD (brown) on KL-NMF ($(m,n,r) = (200, 200, 30)$) from the unscaled initial point.
    The left column corresponds to the iteration axis, and the right column corresponds to the time axis.}
    \label{fig:nmf-x0unscaled-200-200-30}
\end{figure}

\begin{figure}[!pth]
    \centering
    \begin{minipage}[b]{0.49\linewidth}
        \centering
        \includegraphics[width=\textwidth]{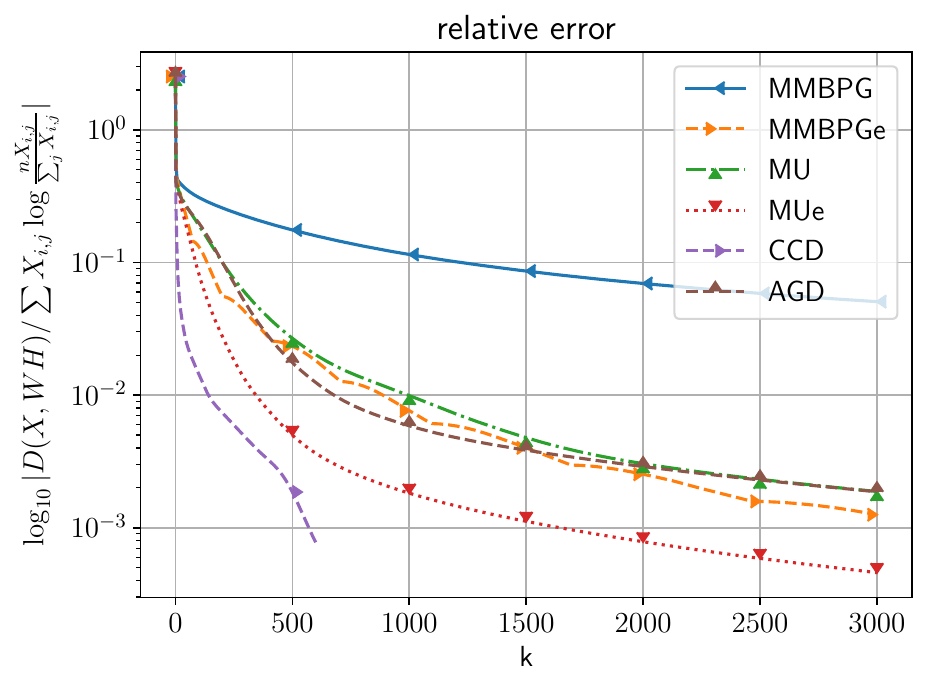}
        \subcaption{The relative error values with the iteration axis.}
        \label{fig:nmf-x0scaled-200-200-30-rel-iter}
    \end{minipage}
    \hfill
    \begin{minipage}[b]{0.49\linewidth}
        \centering
        \includegraphics[width=\textwidth]{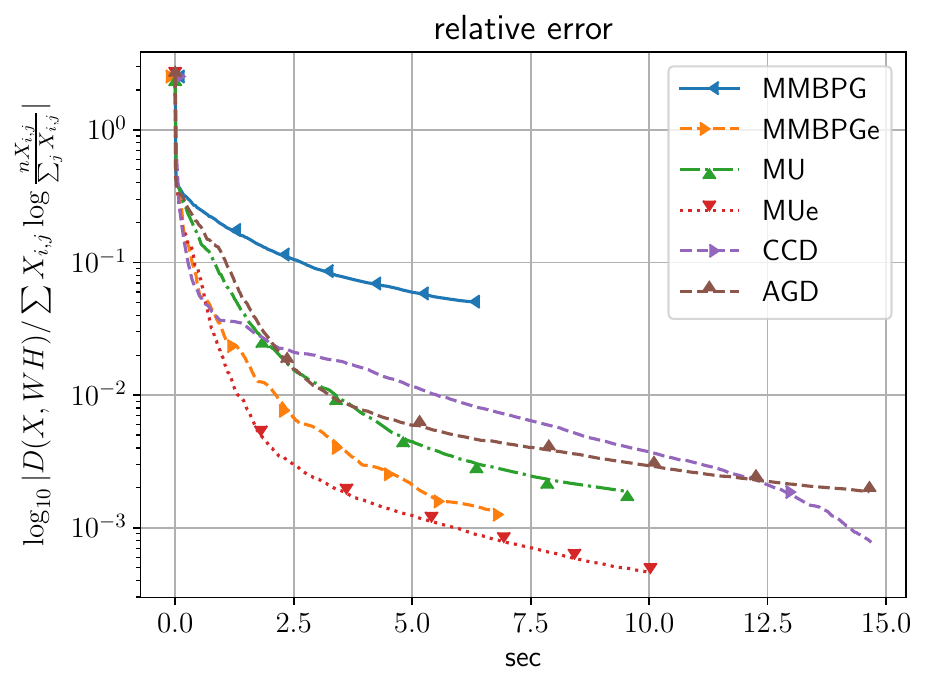}
        \subcaption{The relative error values with the time axis.}
        \label{fig:nmf-x0scaled-200-200-30-rel-time}
    \end{minipage}
    \begin{minipage}[b]{0.49\linewidth}
        \centering
        \includegraphics[width=\textwidth]{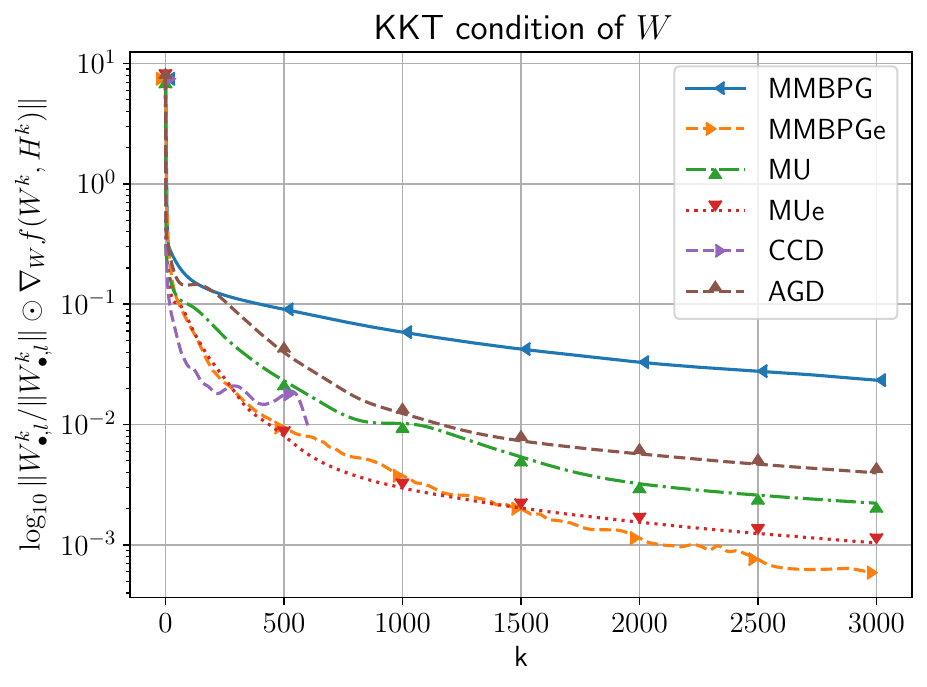}
        \subcaption{The KKT condition with respect to $\W$ with the iteration axis.}
        \label{fig:nmf-x0scaled-200-200-30-kktw-iter}
    \end{minipage}
    \hfill
    \begin{minipage}[b]{0.49\linewidth}
        \centering
        \includegraphics[width=\textwidth]{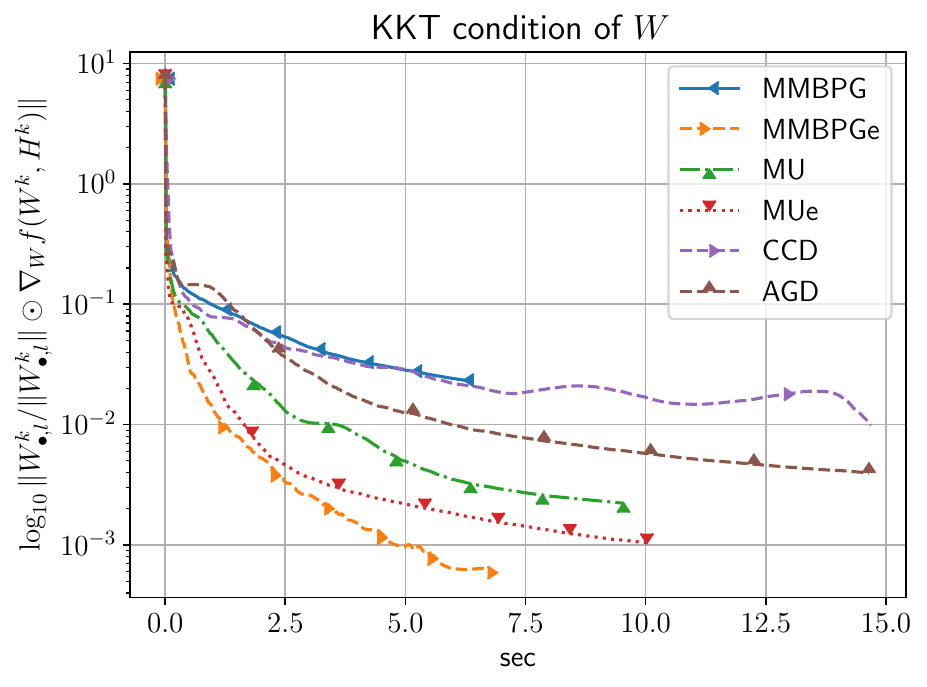}
        \subcaption{The KKT condition with respect to $\W$ with the time axis.}
        \label{fig:nmf-x0scaled-200-200-30-kktw-time}
    \end{minipage}
    \begin{minipage}[b]{0.49\linewidth}
        \centering
        \includegraphics[width=\textwidth]{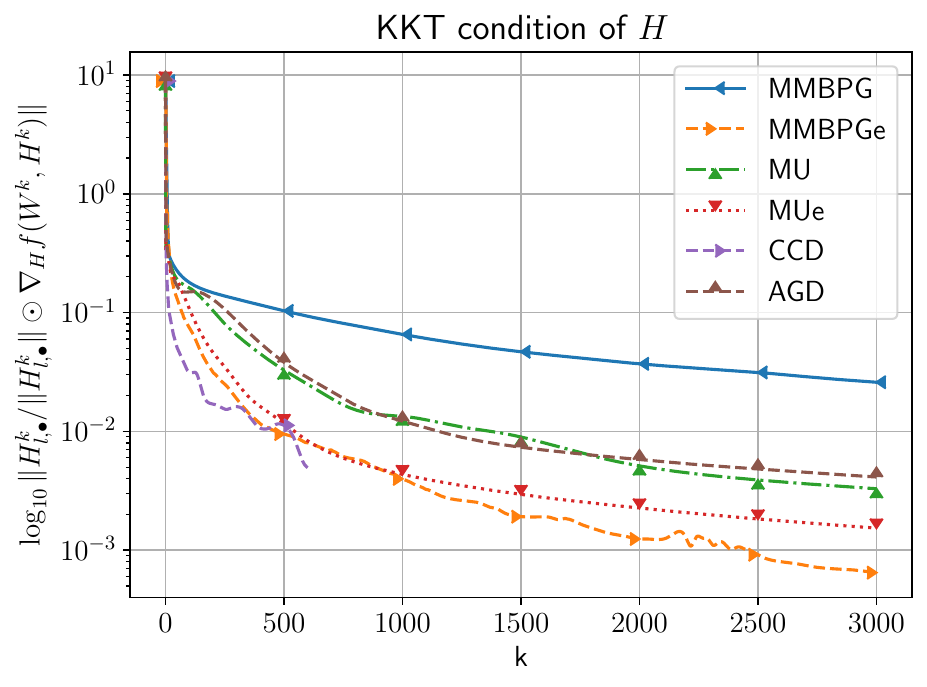}
        \subcaption{The KKT condition with respect to $\H$ with the iteration axis.}
        \label{fig:nmf-x0scaled-200-200-30-kkth-iter}
    \end{minipage}
    \hfill
    \begin{minipage}[b]{0.49\linewidth}
        \centering
        \includegraphics[width=\textwidth]{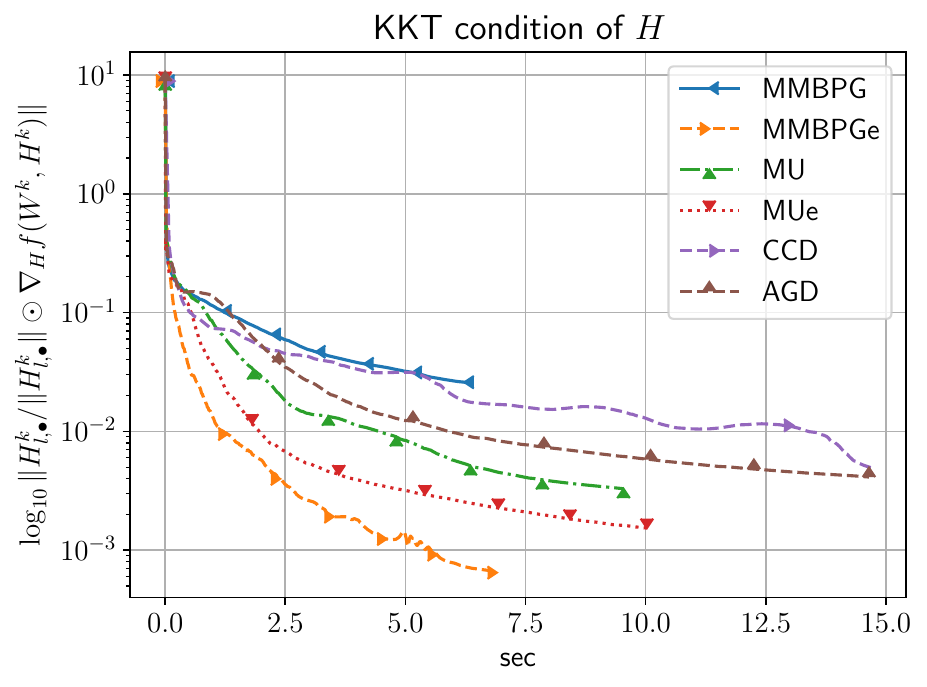}
        \subcaption{The KKT condition with respect to $\H$ with the time axis.}
        \label{fig:nmf-x0scaled-200-200-30-kkth-time}
    \end{minipage}
    \caption{Comparison with MMBPG (blue), MMBPGe (orange), MU (green), MUe (red), CCD (purple), and AGD (brown) on KL-NMF ($(m,n,r) = (200, 200, 30)$) from the scaled initial point.
    The left column corresponds to the iteration axis, and the right column corresponds to the time axis.}
    \label{fig:nmf-x0scaled-200-200-30}
\end{figure}

\begin{figure}[!pth]
    \centering
    \begin{minipage}[b]{0.49\linewidth}
        \centering
        \includegraphics[width=\textwidth]{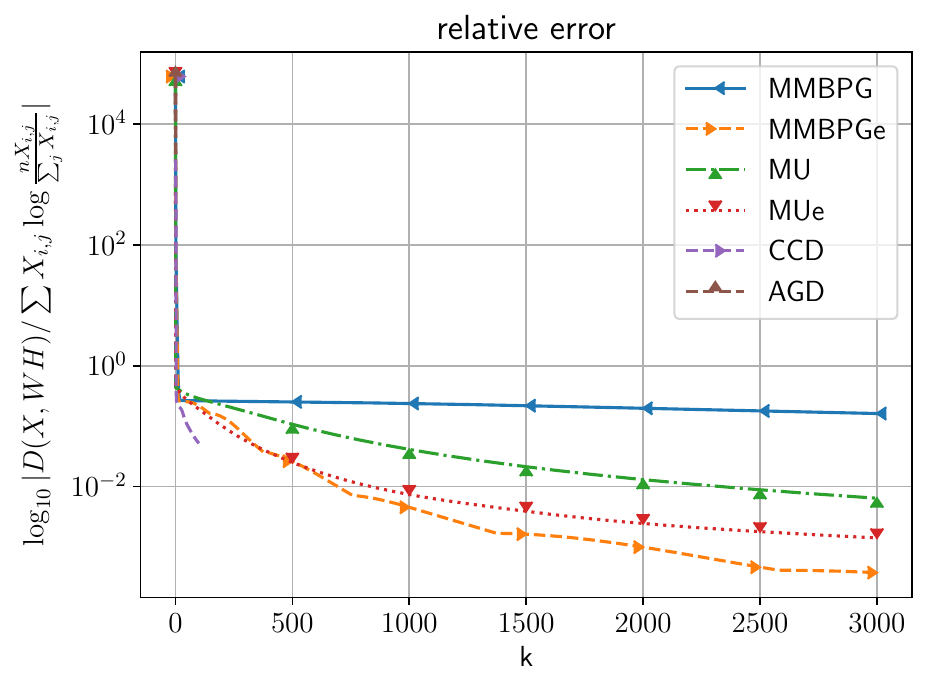}
        \subcaption{The relative error values with the iteration axis.}
        \label{fig:nmf-x0unscaled-500-500-80-rel-iter}
    \end{minipage}
    \hfill
    \begin{minipage}[b]{0.49\linewidth}
        \centering
        \includegraphics[width=\textwidth]{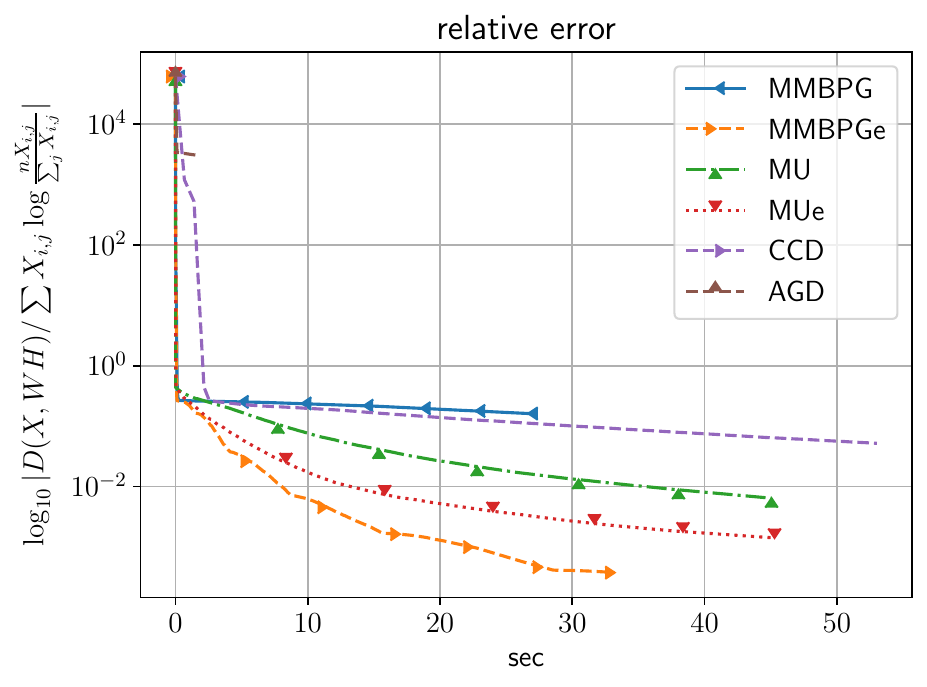}
        \subcaption{The relative error values with the time axis.}
        \label{fig:nmf-x0unscaled-500-500-80-rel-time}
    \end{minipage}
    \begin{minipage}[b]{0.49\linewidth}
        \centering
        \includegraphics[width=\textwidth]{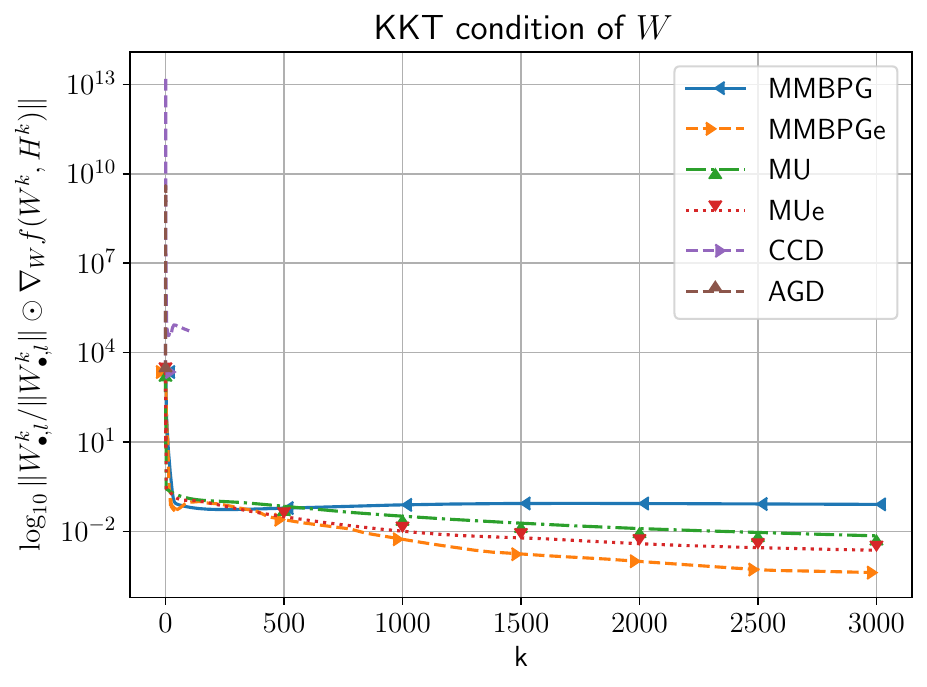}
        \subcaption{The KKT condition with respect to $\W$ with the iteration axis.}
        \label{fig:nmf-x0unscaled-500-500-80-kktw-iter}
    \end{minipage}
    \hfill
    \begin{minipage}[b]{0.49\linewidth}
        \centering
        \includegraphics[width=\textwidth]{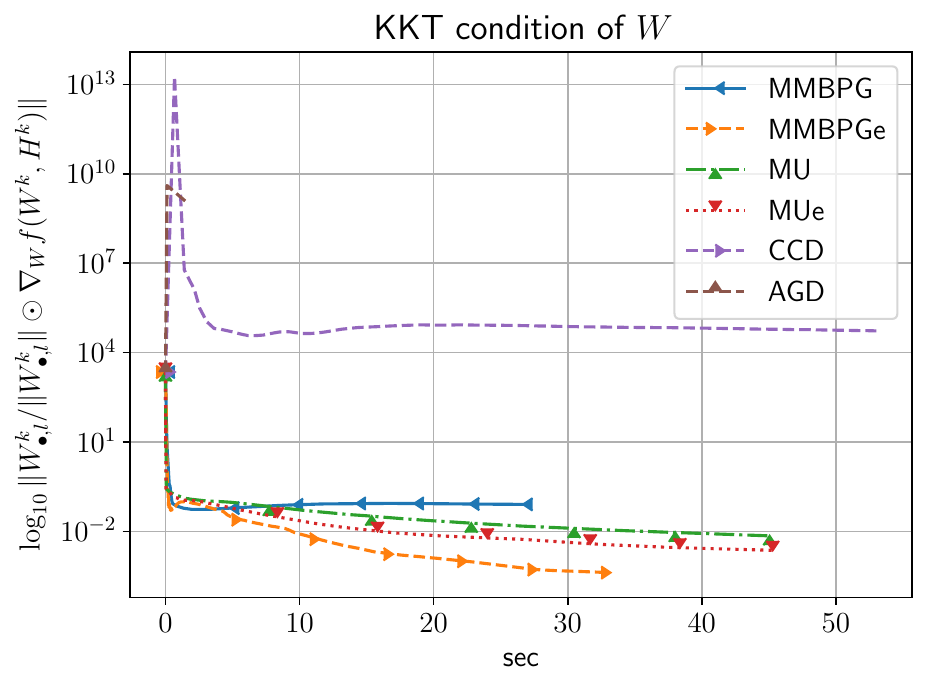}
        \subcaption{The KKT condition with respect to $\W$ with the time axis.}
        \label{fig:nmf-x0unscaled-500-500-80-kktw-time}
    \end{minipage}
    \begin{minipage}[b]{0.49\linewidth}
        \centering
        \includegraphics[width=\textwidth]{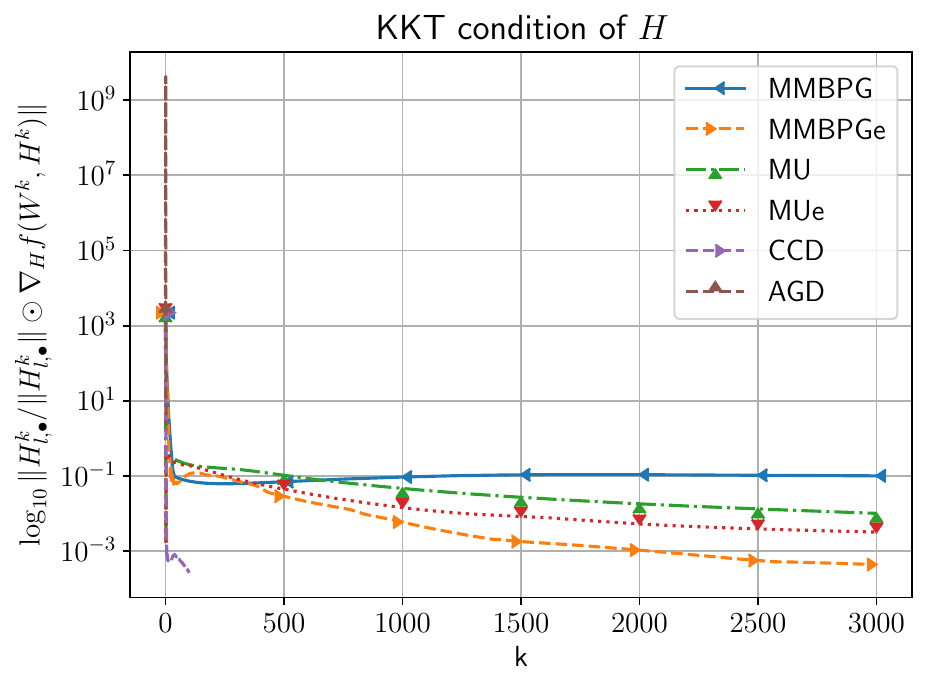}
        \subcaption{The KKT condition with respect to $\H$ with the iteration axis.}
        \label{fig:nmf-x0unscaled-500-500-80-kkth-iter}
    \end{minipage}
    \hfill
    \begin{minipage}[b]{0.49\linewidth}
        \centering
        \includegraphics[width=\textwidth]{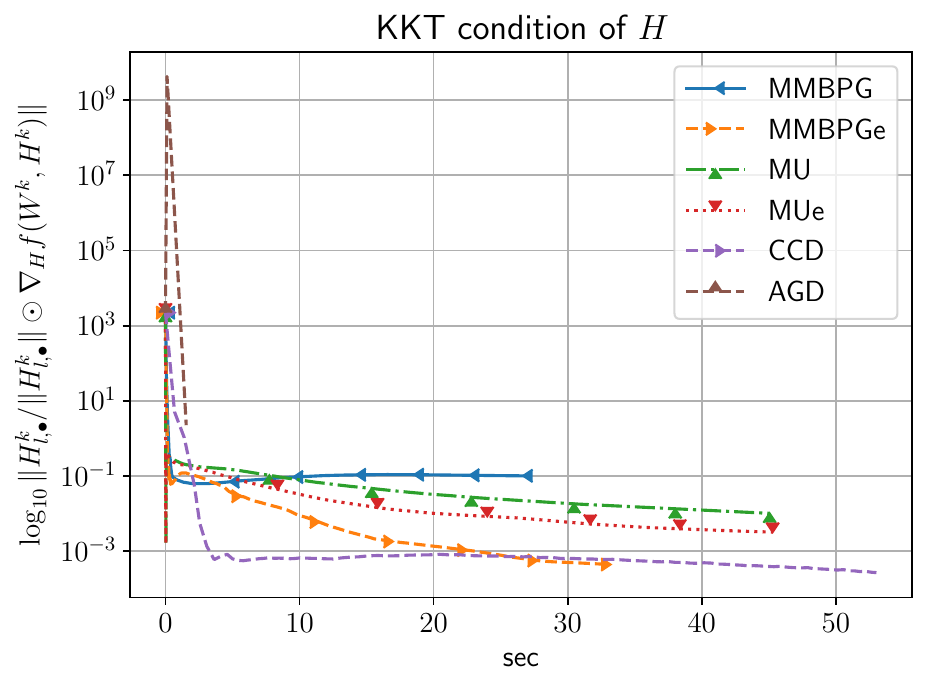}
        \subcaption{The KKT condition with respect to $\H$ with the time axis.}
        \label{fig:nmf-x0unscaled-500-500-80-kkth-time}
    \end{minipage}
    \caption{Comparison with MMBPG (blue), MMBPGe (orange), MU (green), MUe (red), CCD (purple), and AGD (brown) on KL-NMF ($(m,n,r) = (500, 500, 80)$) from the unscaled initial point.
    The left column corresponds to the iteration axis, and the right column corresponds to the time axis.}
    \label{fig:nmf-x0unscaled-500-500-80}
\end{figure}

\begin{figure}[!pth]
    \centering
    \begin{minipage}[b]{0.49\linewidth}
        \centering
        \includegraphics[width=\textwidth]{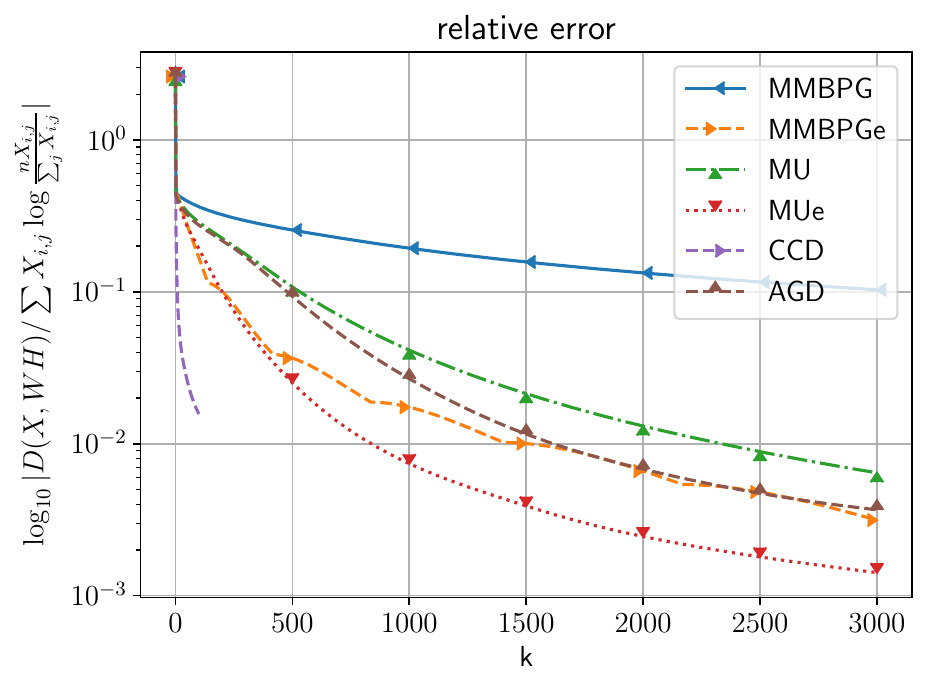}
        \subcaption{The relative error values with the iteration axis.}
        \label{fig:nmf-x0scaled-500-500-80-rel-iter}
    \end{minipage}
    \hfill
    \begin{minipage}[b]{0.49\linewidth}
        \centering
        \includegraphics[width=\textwidth]{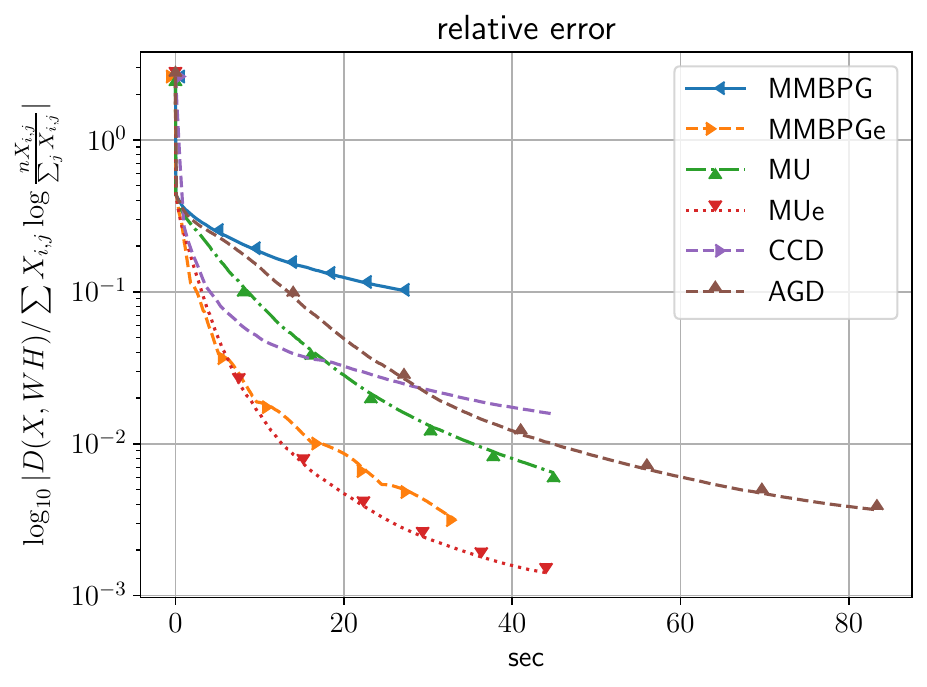}
        \subcaption{The relative error values with the time axis.}
        \label{fig:nmf-x0scaled-500-500-80-rel-time}
    \end{minipage}
    \begin{minipage}[b]{0.49\linewidth}
        \centering
        \includegraphics[width=\textwidth]{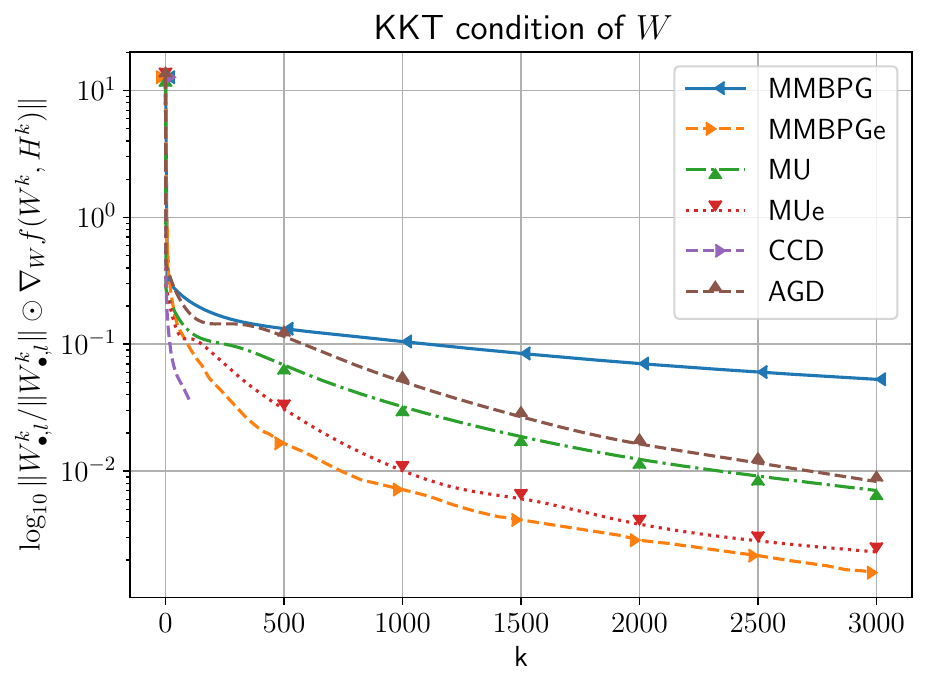}
        \subcaption{The KKT condition with respect to $\W$ with the iteration axis.}
        \label{fig:nmf-x0scaled-500-500-80-kktw-iter}
    \end{minipage}
    \hfill
    \begin{minipage}[b]{0.49\linewidth}
        \centering
        \includegraphics[width=\textwidth]{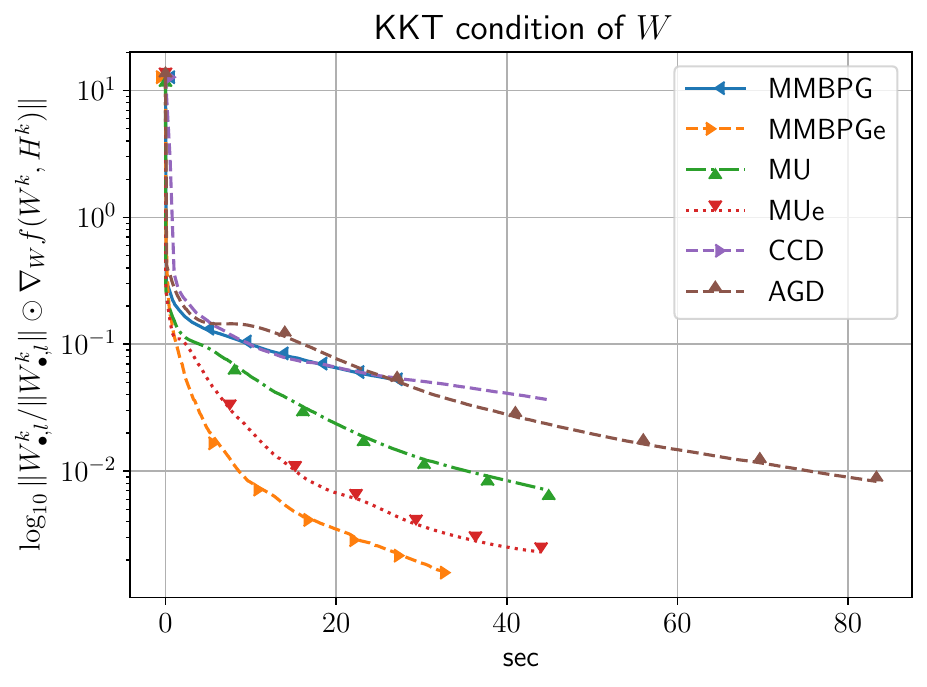}
        \subcaption{The KKT condition with respect to $\W$ with the time axis.}
        \label{fig:nmf-x0scaled-500-500-80-kktw-time}
    \end{minipage}
    \begin{minipage}[b]{0.49\linewidth}
        \centering
        \includegraphics[width=\textwidth]{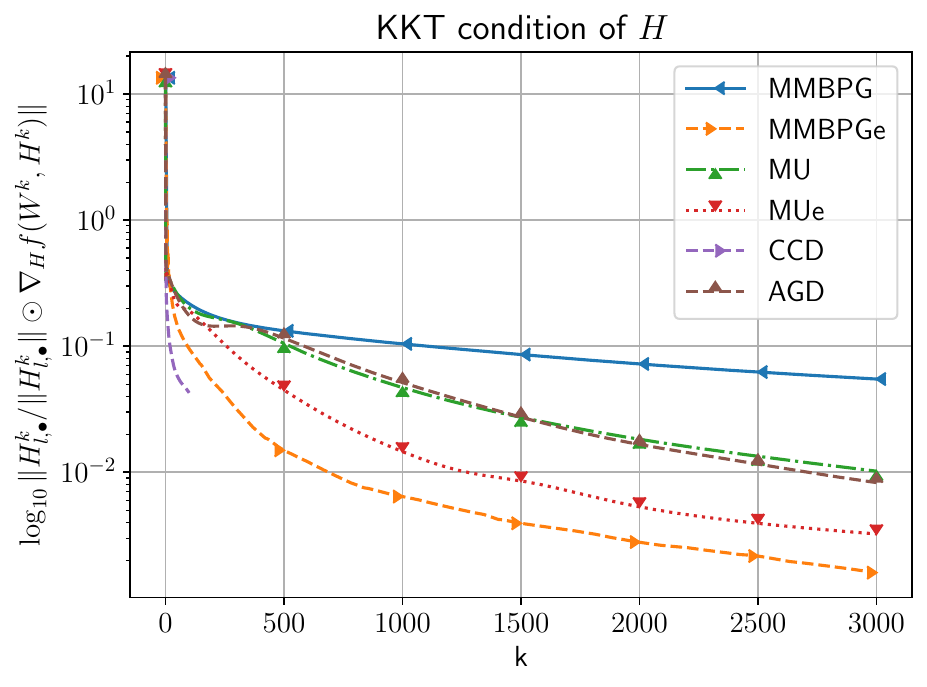}
        \subcaption{The KKT condition with respect to $\H$ with the iteration axis.}
        \label{fig:nmf-x0scaled-500-500-80-kkth-iter}
    \end{minipage}
    \hfill
    \begin{minipage}[b]{0.49\linewidth}
        \centering
        \includegraphics[width=\textwidth]{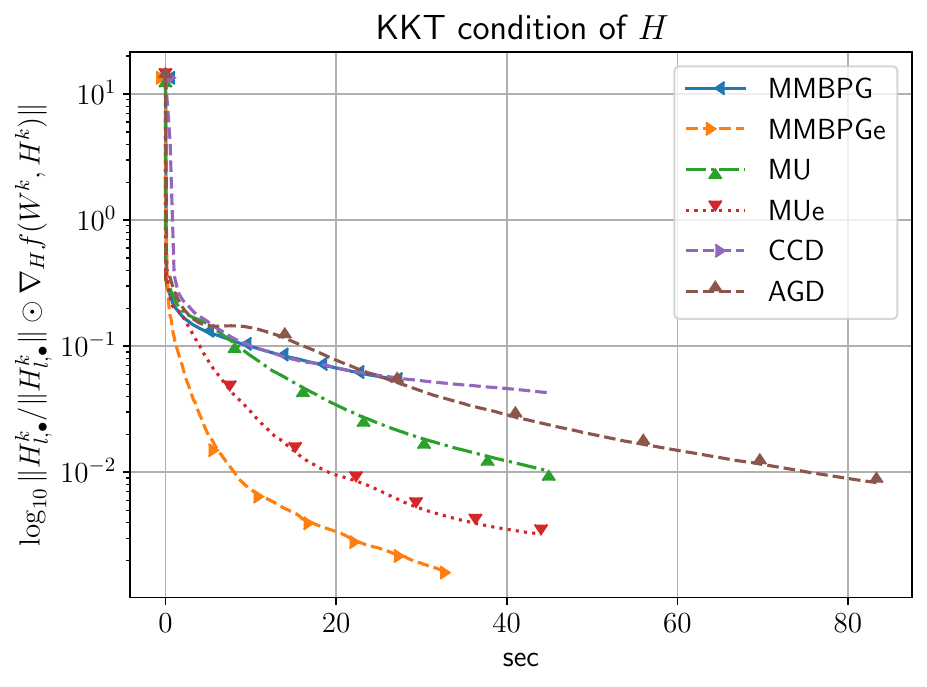}
        \subcaption{The KKT condition with respect to $\H$ with the time axis.}
        \label{fig:nmf-x0scaled-500-500-80-kkth-time}
    \end{minipage}
    \caption{Comparison with MMBPG (blue), MMBPGe (orange), MU (green), MUe (red), CCD (purple), and AGD (brown) on KL-NMF ($(m,n,r) = (500, 500, 80)$) from the scaled initial point.
    The left column corresponds to the iteration axis, and the right column corresponds to the time axis.}
    \label{fig:nmf-x0scaled-500-500-80}
\end{figure}

In addition, Table~\ref{tab:unscaled} shows average performance over 20 different instances with the unscaled initial point, and Table~\ref{tab:scaled} shows it with the scaled initial point.
When algorithms start with the unscaled initial point, MMBPGe has the best performance (Table~\ref{tab:unscaled}).
On the other hand, when algorithms start with the scaled initial point, MUe has the best performance on the relative error, while MMBPGe has the best performance on the norm of the normalized KKT conditions. 
AGD with the unscaled initial point always terminated at the 2nd iteration because of the lack of Lipschitz continuity of $\nabla f$ for all instances.
Although CCD has good performance, it requires more computational time than MMBPG(e) and MU(e) due to nested iterations.
AGD also requires computational time because the line search procedure requires additional evaluations.

\begin{table}[!tpb]
\caption{Average number of iterations, relative error, norm value of KKT conditions, and computational time (sec) over 20 different instances for each size from the unscaled initial points.}
\label{tab:unscaled}
\centering
\begin{tabular}{llllrllll}
\toprule
$m$ & $n$ & $r$ & algorithm & iter & rel & KKT ($\W$) & KKT ($\H$) & time \\
\midrule
\multirow[c]{6}{*}{200} & \multirow[c]{6}{*}{200} & \multirow[c]{6}{*}{30} & MMBPG & 3000 & 3.70e-02 & 5.92e-03 & 7.15e-03 & 3.71e+00 \\
 &  &  & MMBPGe & 3000 & \textbf{1.06e-04} & \textbf{2.27e-05} & 2.91e-05 & 4.81e+00 \\
 &  &  & MU & 3000 & 1.59e-03 & 3.77e-04 & 5.36e-04 & 4.74e+00 \\
 &  &  & MUe & 3000 & 4.76e-04 & 2.04e-04 & 2.97e-04 & 4.78e+00 \\
 &  &  & CCD & 600 & 1.58e-03 & 3.36e+02 & \textbf{1.03e-05} & 8.74e+00 \\
 &  &  & AGD & 2 & 1.25e+03 & 1.28e+08 & 9.74e-01 & 2.48e-01 \\\midrule
\multirow[c]{6}{*}{500} & \multirow[c]{6}{*}{500} & \multirow[c]{6}{*}{80} & MMBPG & 3000 & 1.92e-01 & 8.00e-02 & 8.96e-02 & 1.47e+01 \\
 &  &  & MMBPGe & 3000 & \textbf{3.29e-04} & \textbf{3.79e-04} & \textbf{4.59e-04} & 1.79e+01 \\
 &  &  & MU & 3000 & 6.08e-03 & 7.01e-03 & 9.84e-03 & 2.01e+01 \\
 &  &  & MUe & 3000 & 1.33e-03 & 2.29e-03 & 3.16e-03 & 2.04e+01 \\
 &  &  & CCD & 100 & 4.93e-02 & 5.39e+04 & 8.45e-04 & 3.97e+01 \\
 &  &  & AGD & 2 & 2.94e+03 & 1.21e+09 & 8.68e+00 & 1.36e+00 \\
\bottomrule
\end{tabular}
\end{table}
\begin{table}[!tpb]
\caption{Average number of iterations, relative error, norm value of KKT conditions, and computational time (sec) over 20 different instances for each size from the scaled initial points.}
\label{tab:scaled}
\centering
\begin{tabular}{llllrllll}
\toprule
$m$ & $n$ & $r$ & algorithm & iter & rel & KKT ($\W$) & KKT ($\H$) & time \\
\midrule
\multirow[c]{6}{*}{200} & \multirow[c]{6}{*}{200} & \multirow[c]{6}{*}{30} & MMBPG & 3000 & 5.07e-02 & 2.51e-02 & 2.76e-02 & 3.40e+00 \\
 &  &  & MMBPGe & 3000 & 1.26e-03 & \textbf{7.10e-04} & \textbf{7.69e-04} & 4.25e+00 \\
 &  &  & MU & 3000 & 1.43e-03 & 2.05e-03 & 2.94e-03 & 4.03e+00 \\
 &  &  & MUe & 3000 & \textbf{4.06e-04} & 1.06e-03 & 1.53e-03 & 4.05e+00 \\
 &  &  & CCD & 600 & 5.28e-04 & 6.42e-03 & 5.66e-03 & 7.20e+00 \\
 &  &  & AGD & 3000 & 1.30e-03 & 3.12e-03 & 3.32e-03 & 9.35e+00 \\\midrule
\multirow[c]{6}{*}{500} & \multirow[c]{6}{*}{500} & \multirow[c]{6}{*}{80} & MMBPG & 3000 & 1.02e-01 & 5.43e-02 & 5.74e-02 & 1.28e+01 \\
 &  &  & MMBPGe & 3000 & 3.01e-03 & \textbf{1.53e-03} & \textbf{1.61e-03} & 1.69e+01 \\
 &  &  & MU & 3000 & 6.11e-03 & 6.90e-03 & 9.81e-03 & 1.88e+01 \\
 &  &  & MUe & 3000 & \textbf{1.35e-03} & 2.29e-03 & 3.15e-03 & 1.93e+01 \\
 &  &  & CCD & 100 & 1.56e-02 & 3.97e-02 & 3.71e-02 & 2.47e+01 \\
 &  &  & AGD & 3000 & 3.94e-03 & 9.11e-03 & 9.18e-03 & 5.42e+01 \\
\bottomrule
\end{tabular}
\end{table}

Next, we consider $g(\W, \H) = \mu_{\W}\|\W\|_1 + \mu_{\H}\|\H\|_1$ setting $\mu_{\W} = \mu_{\H} = 10^{-4}$.
We compare MMBPG and MMBPGe with MU (also called MM-SNMF-$\ell_1$)~\cite{Marmin2023-kw}, MUe, and AGD.
Because CCD requires the gradient and the Hessian matrix of the objective function for its update, CCD was not applied to this problem.
MUe was implemented as the MU~(MM-SNMF-$\ell_1$)~\cite{Marmin2023-kw} with extrapolation, which is the same as~\cite{Hien2025-vc}.
Note that MUe is not guaranteed to converge for $g \not \equiv 0$.
In this case, AGD is the alternating proximal gradient algorithm with line search.
The number of nonzero elements of $(\W^*, \H^*)$ is 10\% of all elements.
The initial point is scaled.
Other settings are the same.
We use, instead of the relative error, the objective function value because the objective function includes $g$.
Figures~\ref{fig:nmf-x0unscaled-200-200-30-sp} and~\ref{fig:nmf-x0scaled-200-200-30-sp} show that MMBPGe outperforms other algorithms on the objective function value and the norm value of the KKT conditions.
Table~\ref{tab:resetting-l1} shows the number of times $\theta_k$ and $\beta_k$ were reset in MMBPGe. The restart frequency is low at approximately 0.8\% for the unscaled initial point case and 0.6\% for the scaled initial point case in Table~\ref{tab:resetting-l1}.
Moreover, Table~\ref{tab:unscaled-sp} shows average performance over 20 different instances with the unscaled initial point, and Table~\ref{tab:scaled-sp} shows it with the scaled initial point.
In both cases, MMBPGe has the best performance.
Therefore, for $g(\W, \H) = \mu_{\W}\|\W\|_1 + \mu_{\H}\|\H\|_1$, MMBPGe outperforms other existing algorithms.

\begin{table}[!tpb]
\caption{The frequency of occurrence of $\Y^k \not\in\R_{++}^{m \times r} \times \R_{++}^{r \times n}$ and \eqref{ineq:adaptive-restart} for MMBPGe in Figures~\ref{fig:nmf-x0unscaled-200-200-30-sp} and~\ref{fig:nmf-x0scaled-200-200-30-sp}.}
\label{tab:resetting-l1}
\centering
\begin{tabular}{lllcrr}
\toprule
$m$ & $n$ & $r$ & scaled & nonpositive & \eqref{ineq:adaptive-restart}\\ \midrule
200 & 200 & 30 & \xmarkr & 1 & 26 \\
200 & 200 & 30 & \cmarkg & 0 & 18 \\
\bottomrule
\end{tabular}
\end{table}

\begin{figure}[!pth]
    \centering
    \begin{minipage}[b]{0.49\linewidth}
        \centering
        \includegraphics[width=\textwidth]{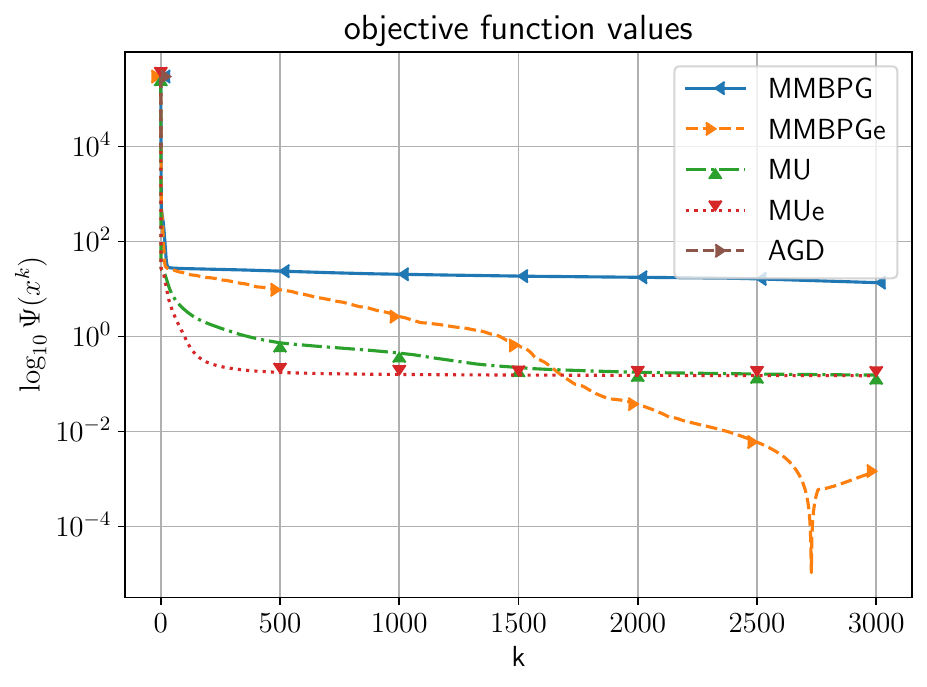}
        \subcaption{The objective function error values with the iteration axis.}
        \label{fig:nmf-x0unscaled-200-200-30-obj-iter-sp}
    \end{minipage}
    \hfill
    \begin{minipage}[b]{0.49\linewidth}
        \centering
        \includegraphics[width=\textwidth]{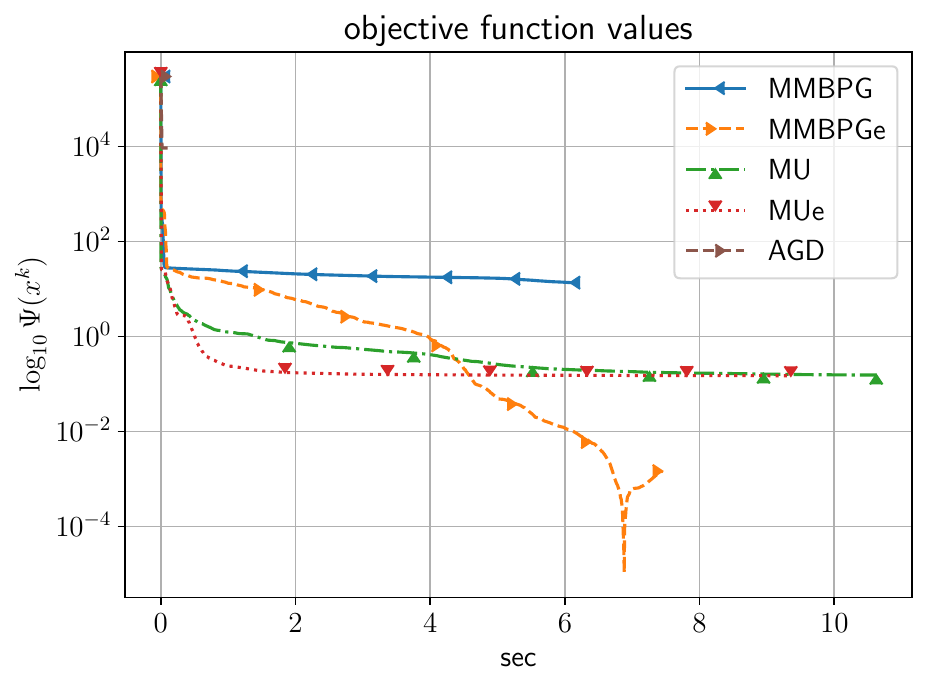}
        \subcaption{The objective function values with the time axis.}
        \label{fig:nmf-x0unscaled-200-200-30-rel-time-sp}
    \end{minipage}
    \begin{minipage}[b]{0.49\linewidth}
        \centering
        \includegraphics[width=\textwidth]{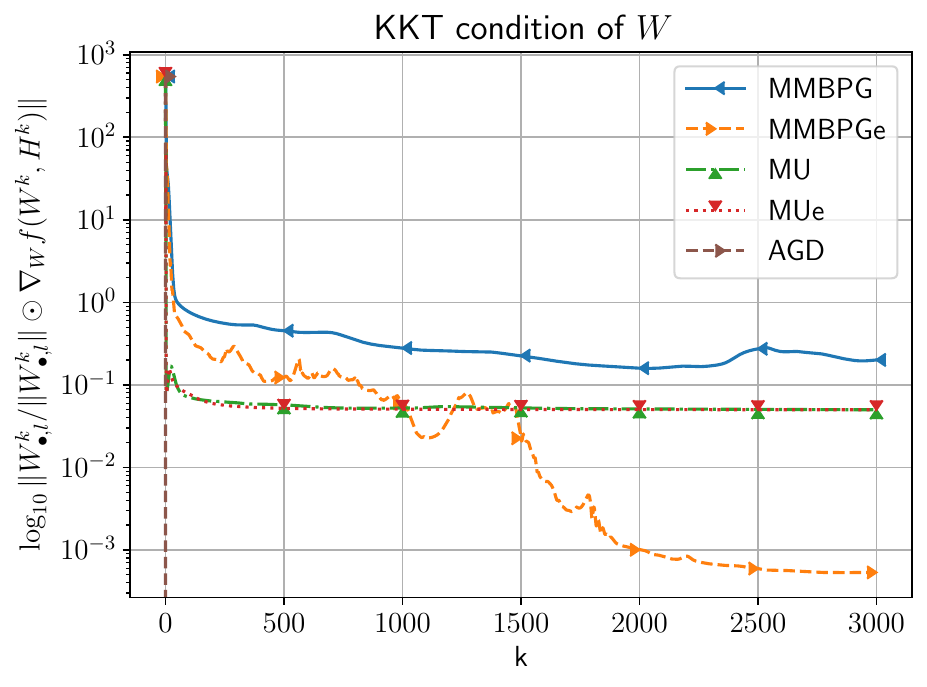}
        \subcaption{The KKT condition with respect to $\W$ with the iteration axis.}
        \label{fig:nmf-x0unscaled-200-200-30-kktw-iter-sp}
    \end{minipage}
    \hfill
    \begin{minipage}[b]{0.49\linewidth}
        \centering
        \includegraphics[width=\textwidth]{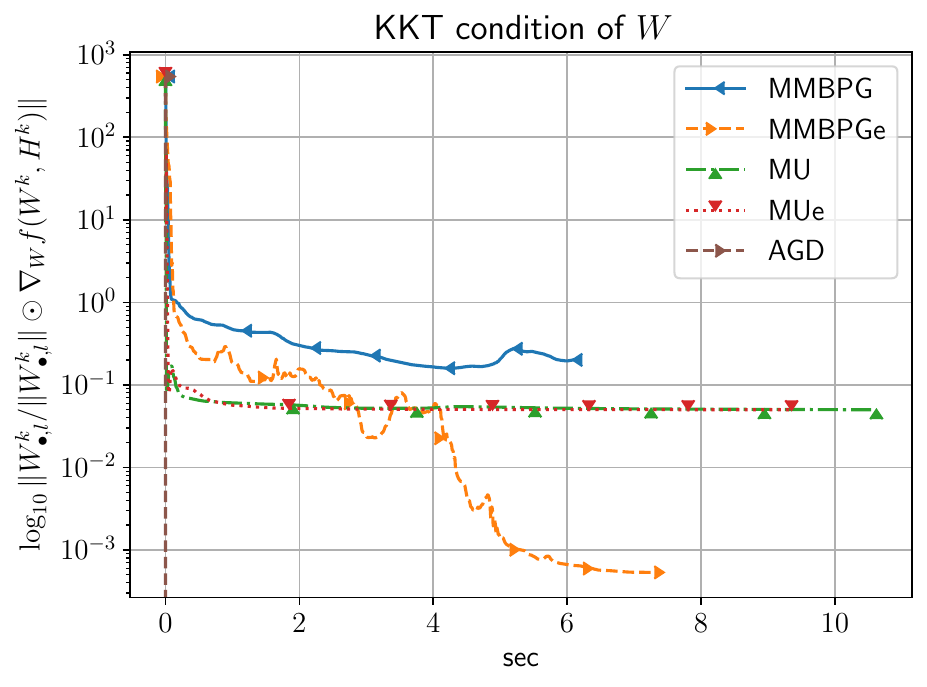}
        \subcaption{The KKT condition with respect to $\W$ with the time axis.}
        \label{fig:nmf-x0unscaled-200-200-30-kktw-time-sp}
    \end{minipage}
    \begin{minipage}[b]{0.49\linewidth}
        \centering
        \includegraphics[width=\textwidth]{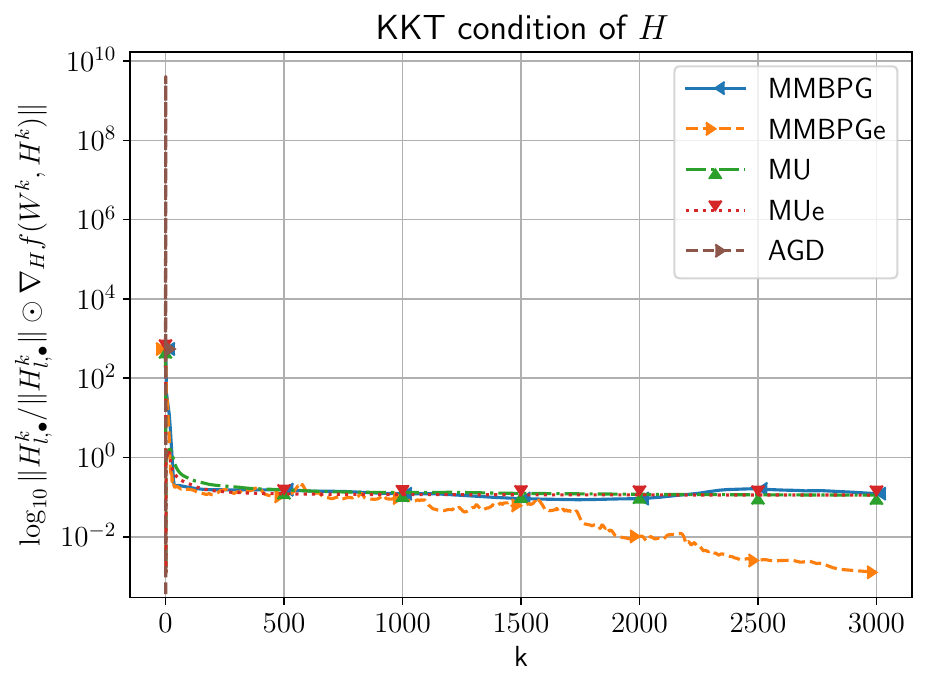}
        \subcaption{The KKT condition with respect to $\H$ with the iteration axis.}
        \label{fig:nmf-x0unscaled-200-200-30-kkth-iter-sp}
    \end{minipage}
    \hfill
    \begin{minipage}[b]{0.49\linewidth}
        \centering
        \includegraphics[width=\textwidth]{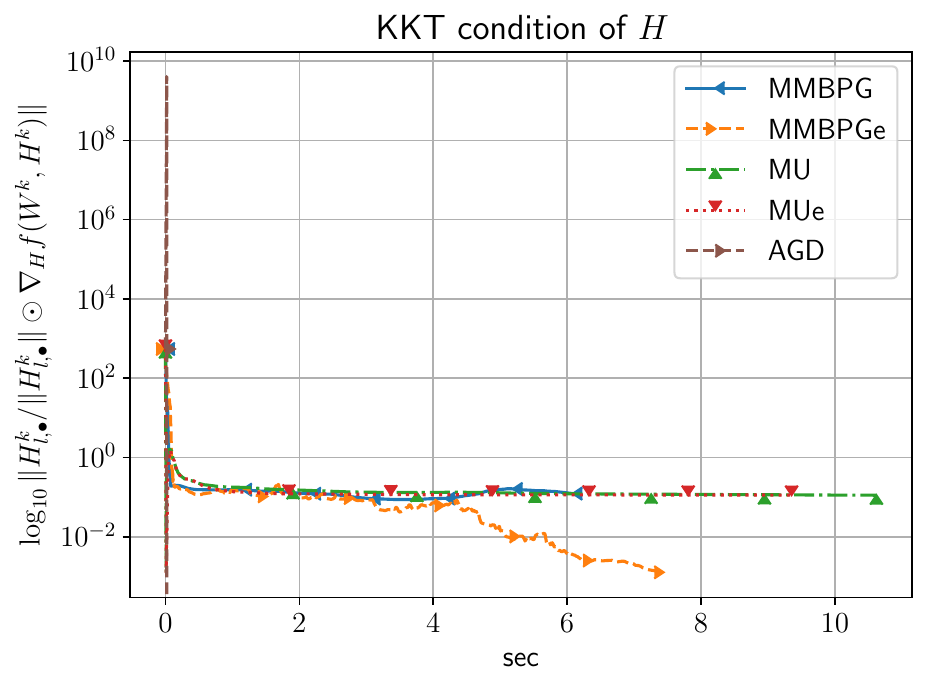}
        \subcaption{The KKT condition with respect to $\H$ with the time axis.}
        \label{fig:nmf-x0unscaled-200-200-30-kkth-time-sp}
    \end{minipage}
    \caption{Comparison with MMBPG (blue), MMBPGe (orange), MU (green), MUe (red), and AGD (brown) on KL-NMF ($(m,n,r) = (200, 200, 30)$) from the unscaled initial point.
    The left column corresponds to the iteration axis, and the right column corresponds to the time axis.}
    \label{fig:nmf-x0unscaled-200-200-30-sp}
\end{figure}

\begin{figure}[!pth]
    \centering
    \begin{minipage}[b]{0.49\linewidth}
        \centering
        \includegraphics[width=\textwidth]{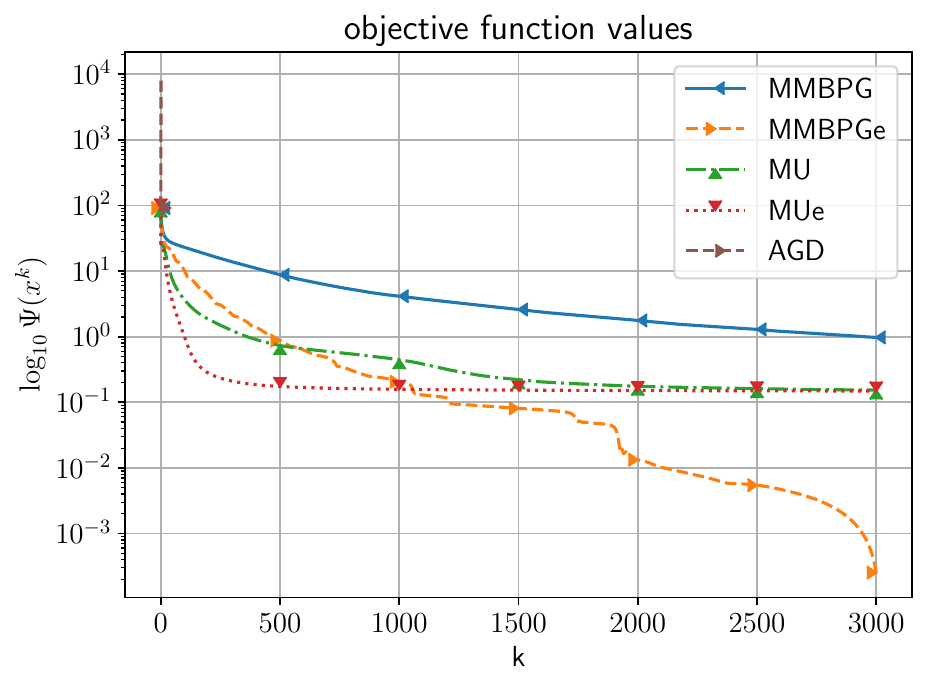}
        \subcaption{The objective function error values with the iteration axis.}
        \label{fig:nmf-x0scaled-200-200-30-obj-iter-sp}
    \end{minipage}
    \hfill
    \begin{minipage}[b]{0.49\linewidth}
        \centering
        \includegraphics[width=\textwidth]{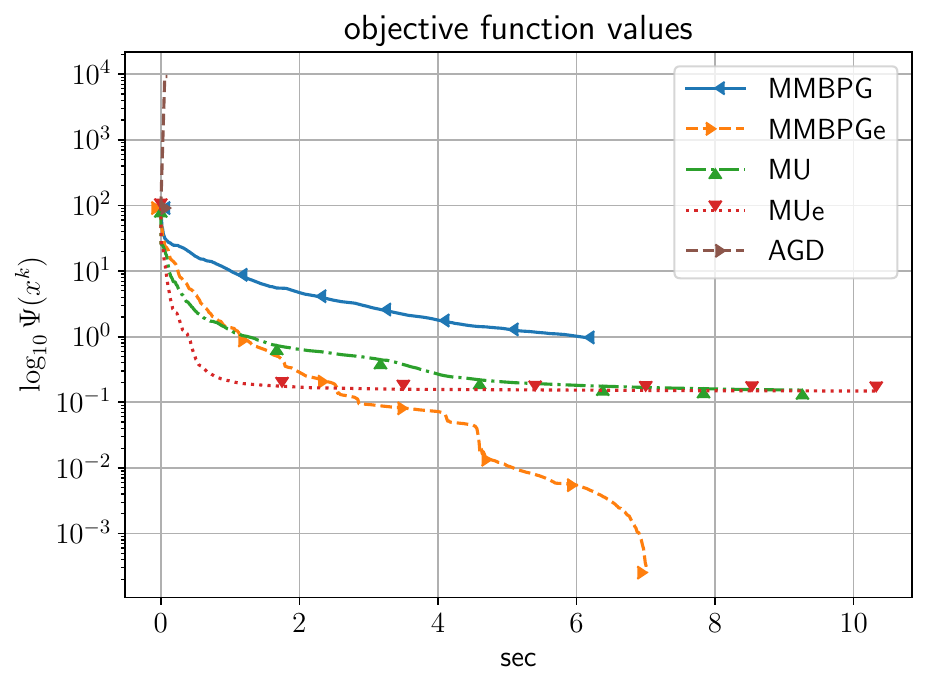}
        \subcaption{The objective function values with the time axis.}
        \label{fig:nmf-x0scaled-200-200-30-rel-time-sp}
    \end{minipage}
    \begin{minipage}[b]{0.49\linewidth}
        \centering
        \includegraphics[width=\textwidth]{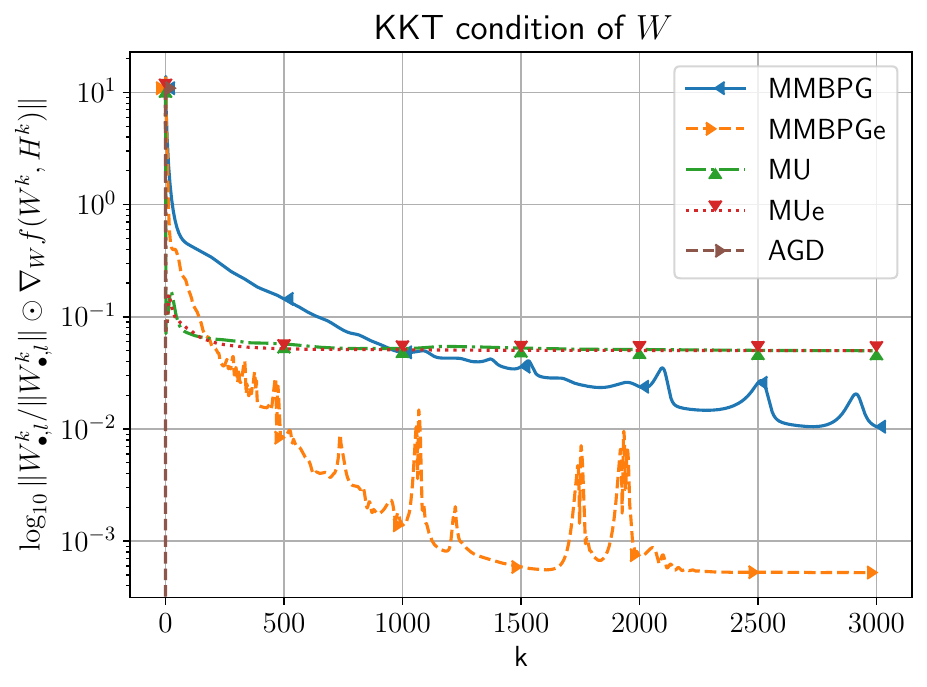}
        \subcaption{The KKT condition with respect to $\W$ with the iteration axis.}
        \label{fig:nmf-x0scaled-200-200-30-kktw-iter-sp}
    \end{minipage}
    \hfill
    \begin{minipage}[b]{0.49\linewidth}
        \centering
        \includegraphics[width=\textwidth]{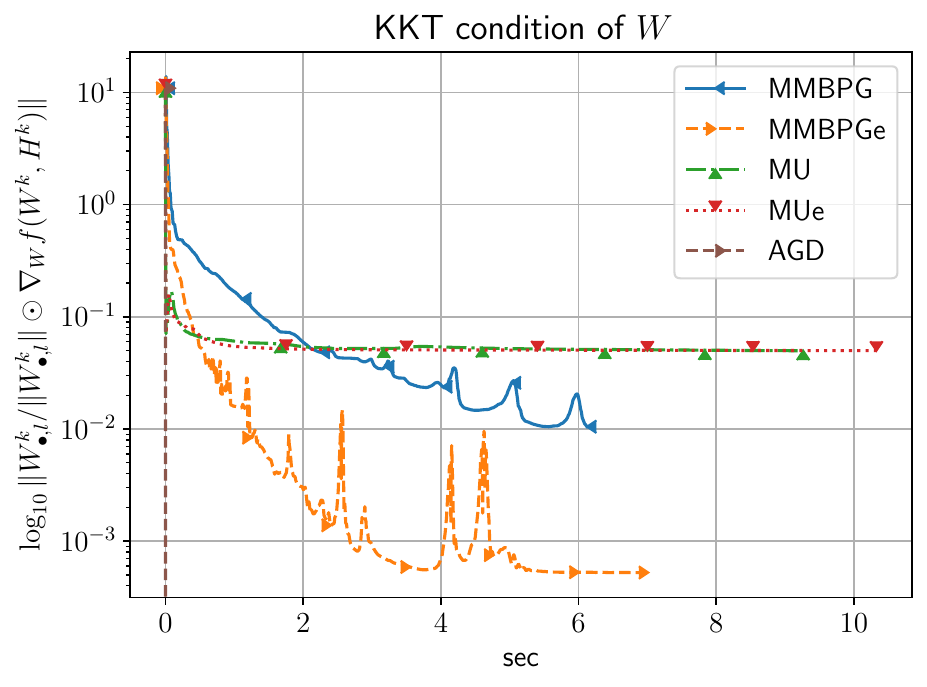}
        \subcaption{The KKT condition with respect to $\W$ with the time axis.}
        \label{fig:nmf-x0scaled-200-200-30-kktw-time-sp}
    \end{minipage}
    \begin{minipage}[b]{0.49\linewidth}
        \centering
        \includegraphics[width=\textwidth]{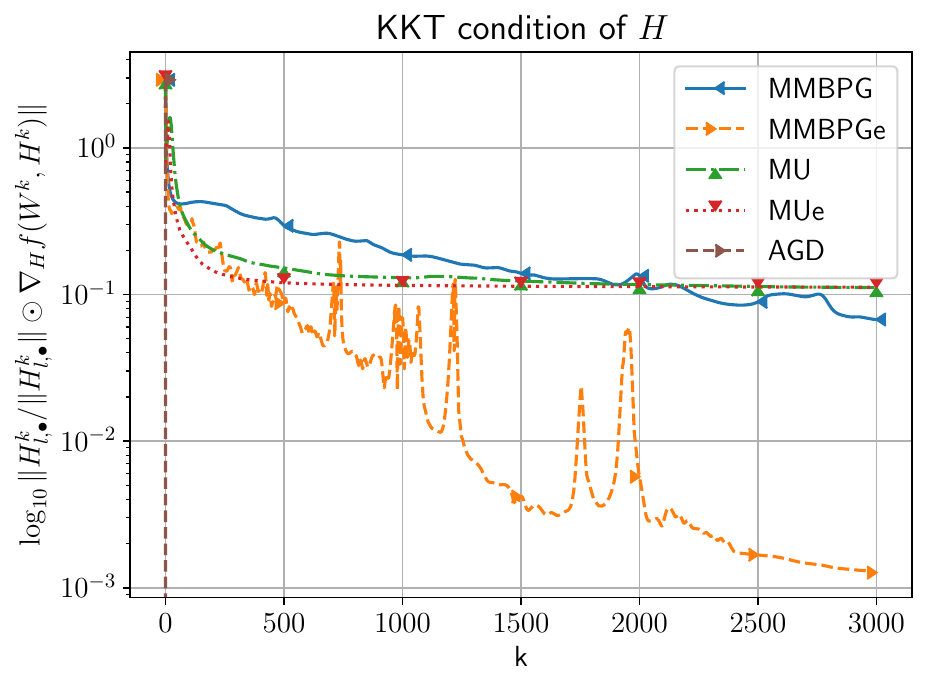}
        \subcaption{The KKT condition with respect to $\H$ with the iteration axis.}
        \label{fig:nmf-x0scaled-200-200-30-kkth-iter-sp}
    \end{minipage}
    \hfill
    \begin{minipage}[b]{0.49\linewidth}
        \centering
        \includegraphics[width=\textwidth]{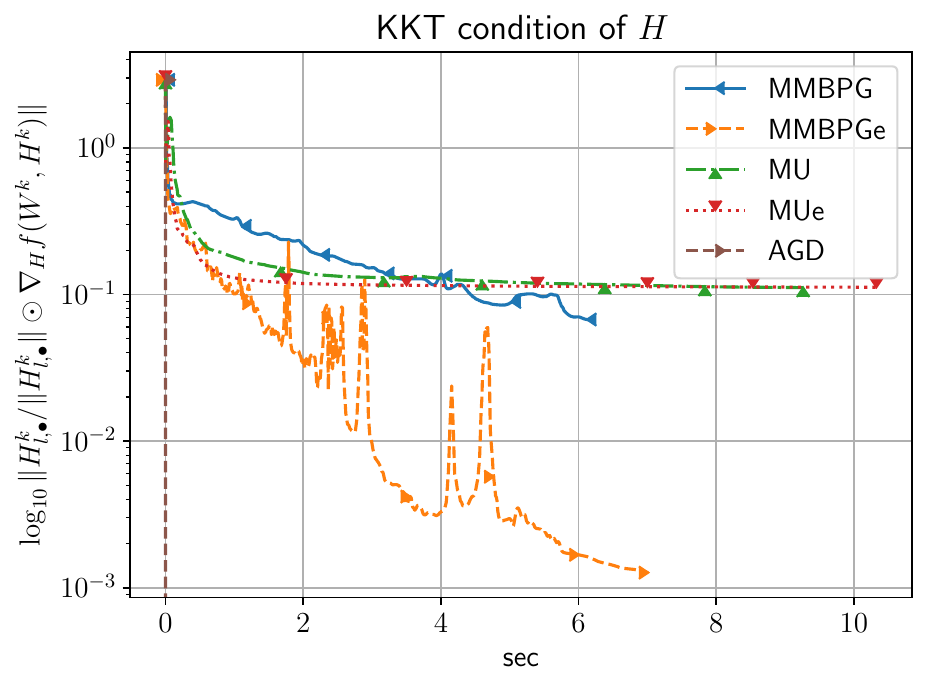}
        \subcaption{The KKT condition with respect to $\H$ with the time axis.}
        \label{fig:nmf-x0scaled-200-200-30-kkth-time-sp}
    \end{minipage}
    \caption{Comparison with MMBPG (blue), MMBPGe (orange), MU (green), MUe (red), and AGD (brown) on KL-NMF ($(m,n,r) = (200, 200, 30)$) from the scaled initial point.
    The left column corresponds to the iteration axis, and the right column corresponds to the time axis.}
    \label{fig:nmf-x0scaled-200-200-30-sp}
\end{figure}

\begin{table}[!tpb]
\caption{Average number of iterations, relative error, norm value of KKT conditions, and computational time (sec) over 20 different instances for each size from the unscaled initial points when $g(\W, \H) = \mu_{\W}\|\W\|_1 + \mu_{\H}\|\H\|_1$. The norm of the KKT conditions of AGD is NaN.}
\label{tab:unscaled-sp}
\centering
\begin{tabular}{llllrllll}
\toprule
$m$ & $n$ & $r$ & algorithm & iter & rel & KKT ($\W$) & KKT ($\H$) & time \\
\midrule
\multirow[c]{5}{*}{200} & \multirow[c]{5}{*}{200} & \multirow[c]{5}{*}{30} & MMBPG & 3000 & 6.17e-01 & 1.79e-01 & 1.21e-01 & 3.22e+00 \\
     &  &  & MMBPGe & 3000 & \textbf{1.82e-03} & \textbf{6.91e-04} & \textbf{5.38e-03} & 4.14e+00 \\
 &  &  & MU & 3000 & 7.03e-03 & 4.42e-02 & 9.98e-02 & 3.84e+00 \\
 &  &  & MUe & 3000 & 6.01e-03 & 4.65e-02 & 1.05e-01 & 3.89e+00 \\
 &  &  & AGD & 3 & 3.43e+02 & -- & -- & 8.67e-02 \\
\bottomrule
\end{tabular}
\end{table}
\begin{table}[!tpb]
\caption{Average number of iterations, relative error, norm value of KKT conditions, and computational time (sec) over 20 different instances for each size from the scaled initial points when $g(\W, \H) = \mu_{\W}\|\W\|_1 + \mu_{\H}\|\H\|_1$. The norm of the KKT conditions of AGD is NaN.}
\label{tab:scaled-sp}
\centering
\begin{tabular}{llllrllll}
\toprule
$m$ & $n$ & $r$ & algorithm & iter & rel & KKT ($\W$) & KKT ($\H$) & time \\\midrule
\multirow[c]{5}{*}{200} & \multirow[c]{5}{*}{200} & \multirow[c]{5}{*}{30} & MMBPG & 3000 & 5.39e-02 & 1.43e-02 & 7.37e-02 & 3.18e+00 \\
 &  &  & MMBPGe & 3000 & \textbf{1.81e-03} & \textbf{6.99e-04} & \textbf{2.96e-03} & 4.14e+00 \\
 &  &  & MU & 3000 & 4.83e-03 & 4.04e-02 & 8.78e-02 & 3.84e+00 \\
 &  &  & MUe & 3000 & 4.57e-03 & 4.00e-02 & 8.69e-02 & 4.07e+00 \\
 &  &  & AGD & 2 & 3.45e+02 & -- & -- & 5.53e-02 \\
\bottomrule
\end{tabular}
\end{table}

\subsection{MovieLens}
Next, we consider real-world data, the MovieLens 100K Dataset (\url{https://grouplens.org/datasets/movielens/}).
The size of KL-NMF~\eqref{prob:klnmf} is $(m, n, r) = (9724, 610, 20)$.
We generate the initial point $(\W^0, \H^0)$ from i.i.d. uniform distribution and scaled it by $(\alpha\W^0, \alpha\H^0)$ with $\alpha = \sqrt{\frac{\sum_{i,j}X_{i,j}}{\sum_{i,j}(\W^0\H^0)_{ij}}}$~\cite{Hien2021-sh}.
We set $g(\W, \H) = \mu_{\W}\|\W\|_1 + \mu_{\H}\|\H\|_1$ with $\mu_{\W} = \mu_{\H} = 0.5$.
We compared MMBPG and MMBPGe with MU and MUe because CCD requires the gradient and the Hessian matrix of the objective function for its update, and AGD did not converge.
To accelerate MMBPG and MMBPGe, we use $(\lambda_{k,1}, \lambda_{k,2}) = (10/3L_{k,1}, 10/L_{k,2})$ from~\eqref{subprob:mmbpg-kl-nmf-h} and~\eqref{subprob:mmbpg-kl-nmf-w}, \ie, $\lambda_{k,i} L_{k,i} < 1$, $i = 1,2$ do not hold.
The maximum iteration is 6000.
Figure~\ref{fig:nmfmovielens-x0scaled-obj-iter} shows that the objective function value of MMBPGe at the 6000th iteration is the smallest.
Moreover, MMBPGe recovered $\W$ and $\H$ with the minimum norm value of the KKT condition (Figures~\ref{fig:nmfmovielens-x0scaled-kktw-iter} and~\ref{fig:nmfmovielens-x0scaled-kkth-iter}). 
Overall, MMBPGe shows stable and better performance.
Note that the frequency of occurrence of $\Y^k \not\in\R_{++}^{m \times r} \times \R_{++}^{r \times n}$ is 4 times and that of~\eqref{ineq:adaptive-restart} is 60 times for MMBPGe. The restart frequency is low at approximately 1\%.

\begin{figure}[!pth]
    \centering
    \begin{minipage}[b]{0.49\linewidth}
        \centering
        \includegraphics[width=\textwidth]{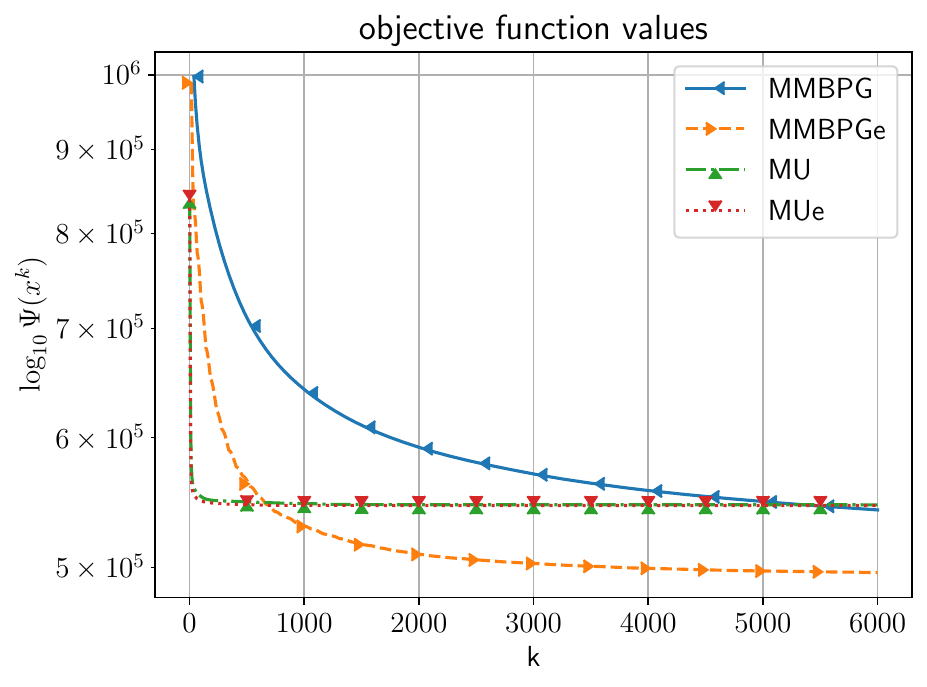}
        \subcaption{The objective function values with the iteration axis.}
        \label{fig:nmfmovielens-x0scaled-obj-iter}
    \end{minipage}
    \hfill
    \begin{minipage}[b]{0.49\linewidth}
        \centering
        \includegraphics[width=\textwidth]{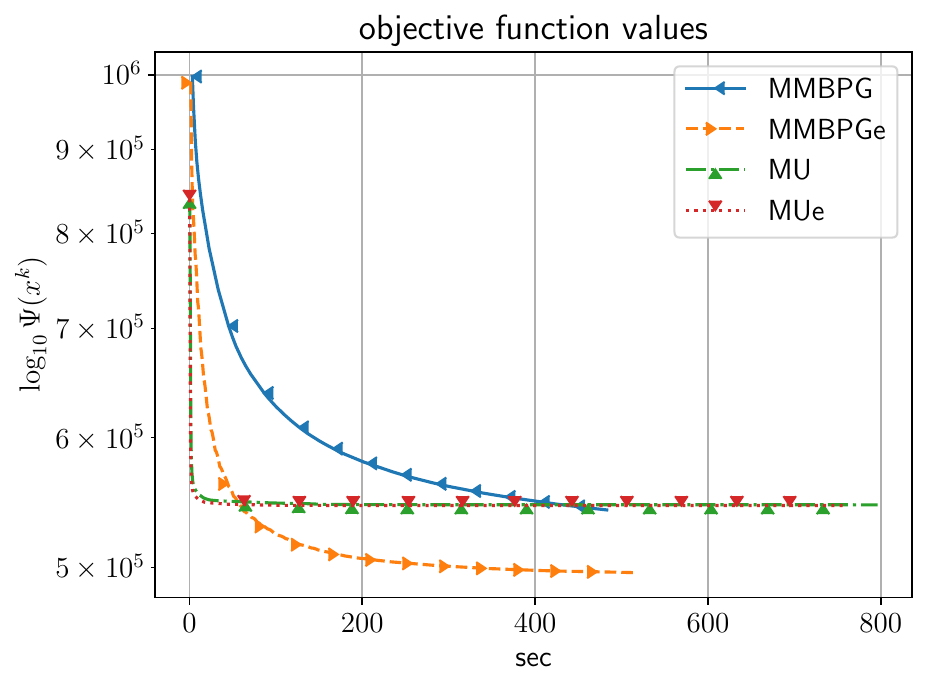}
        \subcaption{The objective function values with the time axis.}
        \label{fig:nmfmovielens-x0scaled-obj-time}
    \end{minipage}
    \begin{minipage}[b]{0.49\linewidth}
        \centering
        \includegraphics[width=\textwidth]{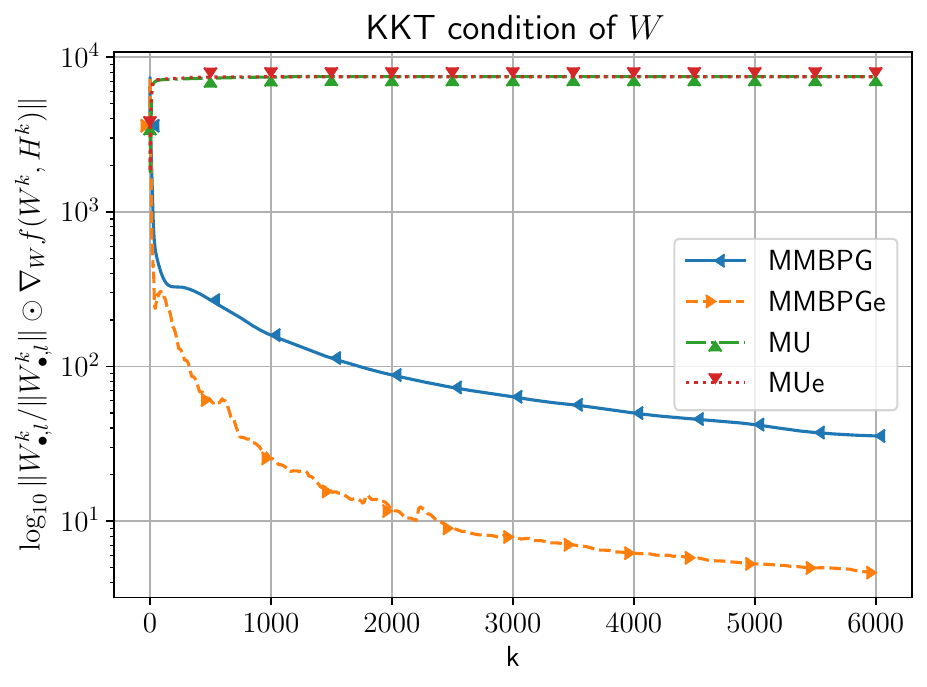}
        \subcaption{The KKT condition with respect to $\W$ with the iteration axis.}
        \label{fig:nmfmovielens-x0scaled-kktw-iter}
    \end{minipage}
    \hfill
    \begin{minipage}[b]{0.49\linewidth}
        \centering
        \includegraphics[width=\textwidth]{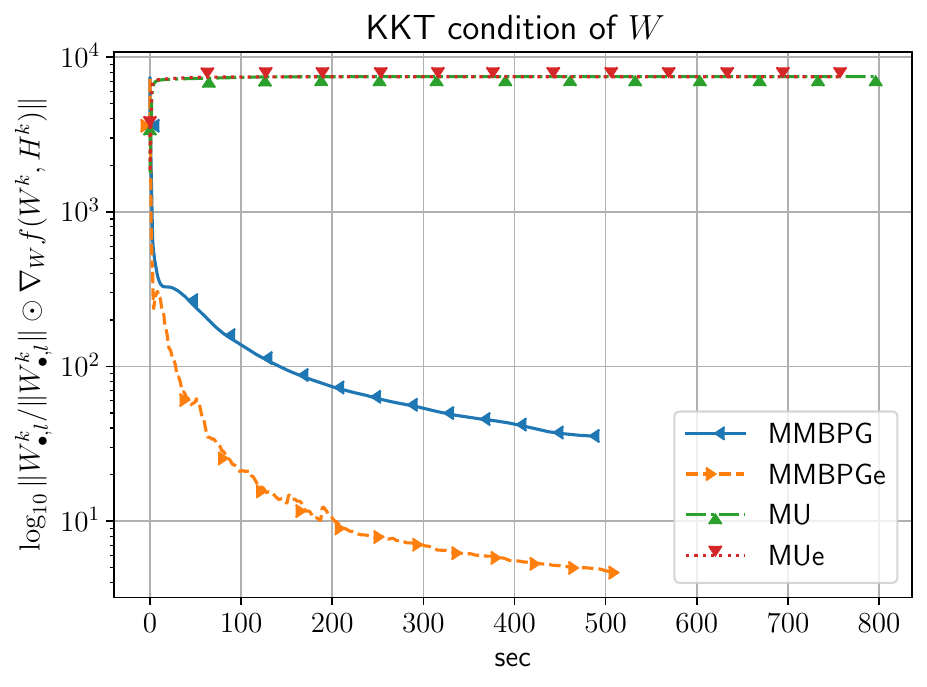}
        \subcaption{The KKT condition with respect to $\W$ with the time axis.}
        \label{fig:nmfmovielens-x0scaled-kktw-time}
    \end{minipage}
    \begin{minipage}[b]{0.49\linewidth}
        \centering
        \includegraphics[width=\textwidth]{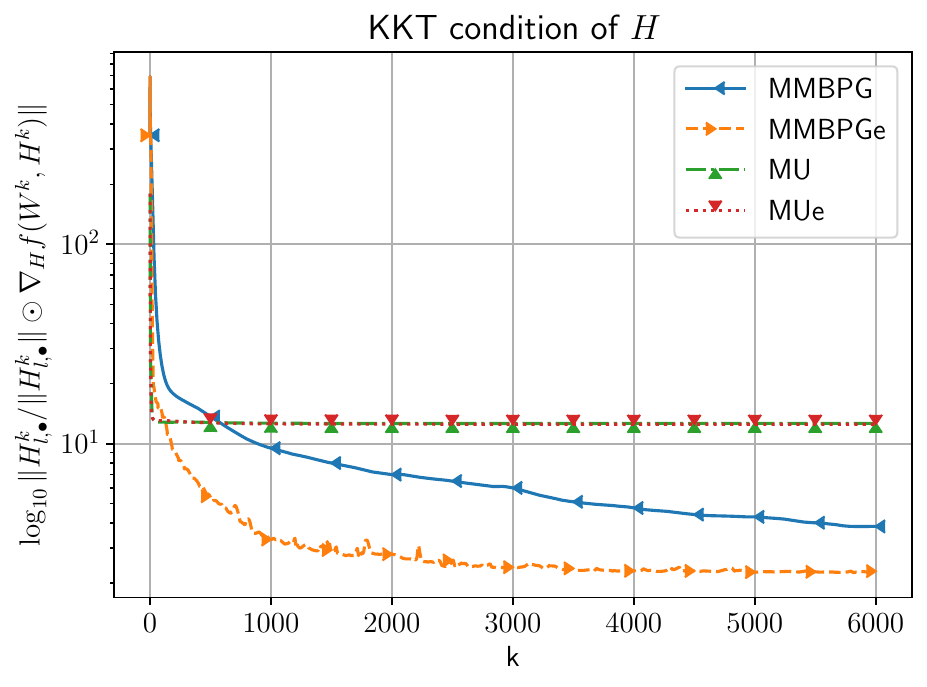}
        \subcaption{The KKT condition with respect to $\H$ with the iteration axis.}
        \label{fig:nmfmovielens-x0scaled-kkth-iter}
    \end{minipage}
    \hfill
    \begin{minipage}[b]{0.49\linewidth}
        \centering
        \includegraphics[width=\textwidth]{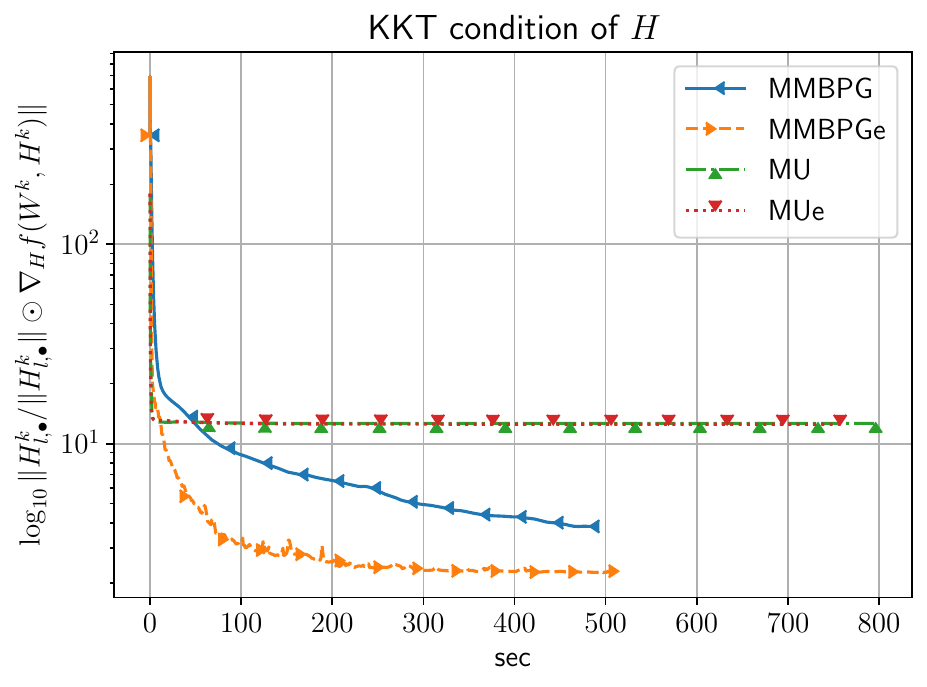}
        \subcaption{The KKT condition with respect to $\H$ with the time axis.}
        \label{fig:nmfmovielens-x0scaled-kkth-time}
    \end{minipage}
    \caption{Comparison with MMBPG (blue), MMBPGe (orange), MU (green), and MUe (red) on KL-NMF (MovieLens 100K Dataset) from the scaled initial point.
    The left column corresponds to the iteration axis, and the right column corresponds to the time axis. The plot of objective function values shows only values below $10^6$.}
    \label{fig:nmfmovielens-x0scaled}
\end{figure}

\section{Conclusions}\label{sec:conclusion}
In this paper, we propose the variants of BPG and BPGe for KL-NMF, called the majorization-minimization Bregman proximal gradient algorithm (MMBPG) and the MMBPG with extrapolation (MMBPGe).
Theoretically, we establish the monotonically decreasing properties of the objective function and the potential function by MMBPG and MMBPGe, respectively.
Moreover, assuming Kurdyka--\L{}ojasiewicz property, we establish that a sequence generated by MMBPG(e) globally converges to a stationary point.
To apply MMBPG(e) to KL-NMF, we define a separable Bregman distance that satisfies the smooth adaptable property, which makes the subproblem of MMBPG(e) solvable in closed form.
MMBPG(e) updates $\W$ and $\H$ simultaneously, unlike existing NMF methods.
In numerical experiments, we have applied MMBPG(e) to KL-NMF on synthetic data and real-world data. The results are promising for real-world applications.

\section*{Acknowledgments}
We thank C\'{e}dric F\'{e}votte and Shotaro Yagishita for their kind comments.

\section*{Declarations}
\textbf{Funding}:
This work was supported by JSPS KAKENHI Grant Number JP23K19953 and JP25K21156, and the Nakajima foundation; JSPS KAKENHI Grant Number JP23H01642; JSPS KAKENHI Grant Number JP20H01951.\\
\textbf{Conflict of interest}:
The authors have no competing interests to declare that are relevant to the content of this article.\\
\textbf{Data availability}:
The datasets generated during and/or analyzed during the current study are available in the GitHub repository, \url{https://github.com/ShotaTakahashi/mmbpg-kl-nmf}.

\addcontentsline{toc}{section}{References}
\bibliographystyle{spmpsci}
\bibliography{main}

\end{document}